\documentclass[11pt]{article}

\usepackage{algpseudocode}
\usepackage{algorithm}
\usepackage{float}
\usepackage{amsmath}
\usepackage{amssymb}
\usepackage{amsthm}
\usepackage{latexsym}
\usepackage{color}
\usepackage{graphicx}
\usepackage{color}
\usepackage{caption}
\usepackage{subcaption}
\usepackage[normalem]{ulem}
\usepackage{enumitem}
\usepackage{upgreek}
\usepackage{xcolor}
\usepackage{multicol, multirow}
\usepackage[utf8]{inputenc}
\usepackage{eurosym}
\usepackage{hyperref}
\usepackage{float}

\DeclareSymbolFont{calletters}{OMS}{cmsy}{m}{n}
\DeclareSymbolFontAlphabet{\mathcal}{calletters}
    
\makeatletter \@addtoreset{equation}{section}   % reset equation numbering at each section

\newcommand{\bea}{\begin{eqnarray}}
\newcommand{\eea}{\end{eqnarray}}
\newcommand{\bes}{\begin{subequations}}
\newcommand{\ees}{\end{subequations}}
\newcommand{\bgt}{\begin{gather}}
\newcommand{\egt}{\begin{gather}}
\newcommand{\beaa}{\begin{eqnarray*}}
\newcommand{\eeaa}{\end{eqnarray*}}

\newtheorem{Theorem}{Theorem}[section]

\newtheorem{Proposition}[Theorem]{Proposition}

\newtheorem{Assumption}[Theorem]{Assumption}

\newtheorem{Lemma}[Theorem]{Lemma}

\newtheorem{Remark}[Theorem]{Remark}

\newcommand{\ang}[1]{\left[#1\right]}  	  % angular brackets
\newcommand{\brak}[1]{\left(#1\right)}    % round brackets
\newcommand{\crl}[1]{\left\{#1\right\}}   % curly brackets
\newcommand{\tr}[1]{\frac{1}{2}\text{Tr}\ang{#1}}
\newcommand{\Ex}[1]{\E\ang{#1}}
\newcommand{\indi}[1]{\mathbf{1}_{\{#1\}}}

\newcommand{\no}{\noindent}

    \def \st{\text{ s.t }}
 
    \def\Ac{\mathcal{A}}

    \def\Dc{\mathcal{D}}
    \def\Ec{\mathcal{E}}
    \def\Fc{\mathcal{F}}
    \def\Gc{\mathcal{G}}
    \def\Hc{\mathcal{H}}

    \def\Lc{\mathcal{L}}
    
    \def\Nc{\mathcal{N}}
    \def\Oc{\mathcal{O}}

    \def\Uc{\mathcal{U}}
    \def\Vc{\mathcal{V}}

    \def \E{\mathbb{E}}

    \def \P{\mathbb{P}}
    
    \def \R{\mathbb{R}}
    \def \S{\mathbb{S}}
    \def \M{\mathbb{M}}
    \def \N{\mathbb{N}}

    \def \a{\alpha}
    \def \b{\beta}
    \def \v{\nu}
    \def \e{\varepsilon}
    \def \vphi{\varphi}
    \def \k{\kappa}
    \def \t{\theta}
		
    \def \Pas{\text{ } \P\text{ - a.s}}

    \def \tV{^{\t,V}}
    \def \tjw{^{\t,\varpi, j}}
    \def \tjV{^{\t,V, j}}

    \def \tk{_{t_k}}
    \def \tkk{_{t_{k+1}}}

    \def \jtk{^j\tk}
    \def \jtkk{^j\tkk}

    \def \txv{^\nu_{t,x}}

    \def \txvT{\txv(T)}

    \def \muxr{\mu(X\txv(r),\nu_r)}

    \def \sigx{\sigma}
    
    \def \sigxr{\sigma(X\txv(r),\nu_r)}

    \def \tpa{^\a_{t,p}}

    \def \Ptpa{P\tpa}

    \def \Rd{\R^d}
    \def \Rdd{\R^{d+1}}
    \def \Sd{\S^d}

    \def \domd{[0,T]\times \Rdd}
    \def \domint{[0,T)\times \Rd}
    \def \domdint{[0,T)\times \Rdd}

    \def\Ur{{\rm U}}
    \def\ur{{\rm u}}
		
    \def \vhat{\hat{\v}}

    \def \txnhat{(\hat{t}_n,\hat{x}_n)}

    \def \txp{(t,x,p)}

    \def \txo{(t_0,x_0)}

    \def \x{\times}
    \def \1{{\bf 1}}
    \def \vs#1{\vspace{#1pt}}
    \def\eps{\e}
    \def\cl{{\rm cl}}
    \def\inte{{\rm int}}
    \def\vp{\varphi}
        
    \def \eVbT{\varepsilon_{\Vc}^T}
    
    \def \eVbVb{\varepsilon_\Vc^\Vc}

\title{Optimal Control with Expectation Constraint in a Smooth Boundary Case
    }

\author{
Bruno Bouchard
\footnote{CEREMADE, Universit\'e Paris Dauphine - PSL, CNRS.  bouchard@ceremade.dauphine.fr }
\and
Lucas Gnecco Heredia
\footnote{LAMSADE, Miles, Universit\'e Paris Dauphine - PSL. lucas.gnecco-heredia@lamsade.dauphine.fr }
\and 
Ludovic Moreau 
\footnote{Abeille Assurances.  ludovic.moreau@abeille-assurances.fr}
\and 
Kim-Anh Pham 
\footnote{CEREMADE, Universit\'e Paris Dauphine - PSL, and Abeille Assurances.  kim-anh.pham@dauphine.eu }
}

\date{\today}

\begin{document}
	\maketitle
    
% ---------------------------------
%%% Abstract %%%
%----------------------------------

\begin{abstract}
		Motivated by applications in asset and liability management, we study, as in Bouchard et al.~(2010) and Bouchard and Nutz (2014),  a utility maximization problem with expectation constraint. We first consider a uniformly elliptic case in which the endogenous state boundary associated with the constraint in expectation is proved to be smooth.  This allows one to derive a proper  Dirichlet condition for the value function of the optimal control problem on this boundary. We then propose a new truncation argument in the martingale representation of the expectation constraint. This leads to an approximating sequence of auxiliary systems of PDEs for which comparison holds. Convergence to the initial optimal control problem is proved. In the degenerate case, we propose another approximation which consists in adding a small noise term to recover   uniformly ellipticity. Convergence is also proved.  To the best of our knowledge, it is the first time that a full analysis is performed for such control problems, so as to  open the doors to the use of numerical schemes.  We illustrate the usefulness of this approach by presenting a first toy model in the context of asset and liability management. 
		Numerical resolution i is performed using neural networks. It is complemented by an estimation of the numerical error, also performed by using a neural network approach.
\end{abstract}

{\bf Key words:}  Stochastic optimal control, state constraint, stochastic target, discontinuous
viscosity solutions.

{\bf Mathematical subject classifications 2020}: Primary 93E20, 49L25; secondary 68T07.

% ---------------------------------
%%% Introduction %%%
%----------------------------------
\section{Introduction}

% Objective of the paper 
\no As in \cite{bouchard_optimal_2010} and \cite{bouchard_weak_2012}, we study a stochastic control problem under a target constraint in expectation of the form: 
\begin{equation*} 
	V(t,x,p) := \sup \crl{\Ex{F(X\txvT)} \text{ : } \v \in \Uc \st \Ex{G(X\txvT)} \leq p  }
\end{equation*} where $X\txv$ is the  strong solution of the stochastic differential equation 
\begin{equation*}
	X\txv(s) = x + \int_{t}^{s} \muxr dr + \int_{t}^{s} \sigxr dW_r
\end{equation*} with $W$ a $d$-dimensional Brownian motion and $\Uc$ the collection of progressively measurable processes $\v$ taking values in some compact set $U \subset \R^{d'}$, $d,d'\ge 1$.  \\
 In the context of finance and insurance, such problem emerges naturally when portfolio managers seek to maximize final financial results under a constraint over expected risks. This is typically the case in asset and liability management, which motivates this work (see Section \ref{sect_toy example}), this paper being a first step towards the application to a more realistic general asset and liability management model.
\vs5

% Brief discussion on the theoretical problem
Following \cite{bouchard_stochastic_2010}, \cite{bouchard_optimal_2010} and \cite{bouchard_weak_2012}, we use a martingale representation argument for the expectation constraint $\Ex{g(X\txvT)} \leq p $ to convert the above into an optimization problem under state constraint in a regular form. Formally, 
$$
\Ex{G(X\txvT)} \leq p\mbox{ iff } \exists\a\in \Ac\mbox{ s.t. }      \Ptpa\ge w(\cdot,X\txv)\mbox{ on } [t,T]
$$ 
in which 
$$
\Ptpa  := p + \int_{t}^{\cdot} \a_r^\top dW_r
$$
with $\a$ belonging to the set $\Ac$ of  progressively measurable processes such that $P^{\a}_{t,0}$ is a martingale, and 
\begin{equation*}
	w(t,x)  := \inf \{ p \in \R : \;\exists\; (\v,\alpha)\in \Uc\x \Ac\mbox{ s.t. }  G(X\txvT) \leq \Ptpa(T)\}.
\end{equation*}
Then, at a formal level, the geometric dynamic programming principle of \cite{soner_dynamic_2002} leads to
\begin{equation}\label{def V augmented intro}  
	V(t,x,p) := \sup \crl{\Ex{F(X\txvT)}:(\v,\a) \in \Uc\x \Ac \st \Ptpa\ge w(\cdot,X\txv)\mbox{ on $[t,T]$} }.
\end{equation}
Using dynamic programming, one can then deduce a PDE characterization for $(t,x,p)\mapsto V(t,x,p)$. As opposed to traditional optimal control problems with state constraint, see e.g.~\cite{ishii1996new,soner1986optimal,soner1986optimalII} for first order problems and \cite{katsoulakis1994viscosity,lasry1989nonlinear} for second order problems, we cannot impose conditions, such as the traditional interior cone condition,  ensuring that the controlled process  $(X\txv,\Ptpa)$ can be driven in the interior of the domain 
\begin{equation} \label{def domain intro}
    \Dc:=\{ \txp \in \domdint : p\ge w(t,x)\}
\end{equation}
whenever it touches its boundary.  On the contrary, we expect that the boundary $\partial \Dc$ of $\Dc$ is absorbing, meaning that once $(\cdot, X\txv,\Ptpa)$ reaches $\partial \Dc$, it stays on the domain boundary from there onwards until the end of the time horizon. The reason is that $w$ can be alternatively written in
\begin{equation}\label{def w intro}
w(t,x)=\inf_{\v \in \Uc} \Ex{G(X\txvT)}
\end{equation}

so that, upon existence in the above, if 
$\Ptpa(\tau)=w(\tau,X\txv(\tau))$, for some stopping time $\tau$, then the best that can be done is to find a control $\hat \nu_{\tau}$ such that the control $\tilde \nu:=\nu\1_{[t,\tau)}+\hat \nu_{\tau}\1_{[\tau,T]}$ satisfies $\Ptpa(\tau)$ $=$ $\E[G(X^{\tilde \nu}_{t,x}(T))|\Fc_{\tau}]$. If $\tilde \alpha$ satisfies $P_{t,p}^{\tilde \alpha}(T)\ge G(X^{\tilde \nu}_{t,x}(T))$, then $P_{t,p}^{\tilde \alpha}= \E[G(X^{\tilde \nu}_{t,x}(T))|\Fc_{\cdot}]=w(\cdot,X^{\tilde \nu}_{t,x})$ on $[\tau,T]$, because $P_{t,p}^{\tilde \alpha}$ has to be a martingale as well as $w(\cdot,X^{\tilde \nu}_{t,x})$ on $[\tau,T]$ by dynamic programming and optimality of $\hat \nu_{\tau}$. This corresponds to the smooth boundary case of \cite[Section 3.3]{bouchard_optimal_2010} (but in a different context), in which the above reasoning leads to a proper Dirichlet boundary condition for $V$ on $\partial D$,   given by an auxiliary function $\Vc$ which corresponds to the conditional expectation of $F(X\txvT)$ is one plays the best control among those such that  $(X\txv,\Ptpa)$ evolves on $\partial \Dc$ up to $T$. An attempt to deal without an absorbing boundary is done in  \cite[Theorem 3.1]{bouchard_optimal_2010}. However, because of the above argument, their set of test functions needs to be void\footnote{The first author discovered this few years after \cite{bouchard_optimal_2010} was published.}. 
\vs5

% Short description of the proposed solution
In this paper, we first impose smoothness and uniformly ellipticity  conditions to ensure that $w$ is smooth and be able to exploit the ideas in \cite[Section 3.3]{bouchard_optimal_2010}.  In our case, the control $\alpha$ comes from a martingale representation argument, and is therefore unbounded, and the Hamilton-Jacobi-Bellman (HJB) equation for $V$ in the interior of $\Dc$ is associated with a discontinuous operator for which we cannot prove comparison. To circumvent this, we study a sequence of  auxiliary control problems $V^N$ in which $\alpha$ is imposed to be bounded by $N$, and for which the HJB equation admits comparison. Using that $Dw$ is uniformly bounded under our assumptions, we prove that $V^{N}$ converges to $V$ as $N$ goes to $\infty$.  Then, a classical numerical scheme can be used to estimate $V^{N}$, and therefore $V$, by taking $N$ large enough. \\

\no When our uniformly ellipticity condition  does not hold, or when the coefficients are not smooth enough, we explain how the original problem can actually be approximated by a sequence of uniformly elliptic problems with smooth enough coefficients. To the best of our knowledge, it is the first time that such an analysis is carried out, so as to open the doors to the use of numerical schemes.\\

% Brief discussion on the numerical method
As a first steps toward an efficient numerical resolution, we also propose a Deep-Learning-based algorithm which involves sequential estimation of the keys elements of our problem. Firstly, the algorithm begins with approximating $w$ and its associated optimal control which verifies \eqref{def w intro} in order to deduce the viable domain boundary $\partial\Dc$. Secondly, the algorithm continues with the estimation of the value function on $\partial\Dc$, denoted by $\Vc := V(\cdot, w(\cdot))$. The last step is to estimate the optimal control which verifies \eqref{def V augmented intro} and to numerically solve for the value function $V$ using the Hamilton-Jacobi-Bellman equation for the interior of the domain and the Dirichlet conditions on the spatial and temporal boundaries. In particular, to take avantage of the PDE characterization of the problem, we use the Physics-Informed Neural Network (PINN) method which was introduced in \cite{raissi_2019} to be a Deep Learning tool well adapted for solving nonlinear partial differential equations. Furthermore, we also consider several enhancements to the PINN methods as suggested by, for example,  \cite{anagnostopoulos_residual_2024}, \cite{wang_2023}, and \cite{mcclenny_2023} to improve the accuracy and convergence of the PINN training. 
\\

Last but not least, we  showcase the result of our method when applied to an asset and liability management problem in life insurance, that is the initial motivation of this work. As of now, the algorithm is indeed simple and can certainly be improved in various way, but it already provides pretty satisfactory results that would probably be difficult to obtain with standard PDE solvers. The model studied here is very simplistic, but the current work will be completed later on with a paper dedicated to the modeling aspects.

\vs5

% Intro to next paper
{To conclude this introduction, we refer to \cite{pfeiffer2021duality} and its introduction for other approaches (level-set, Pontryagin’s maximum principle, or duality) that can be used to tackle similar problems. Note that the level set approach perfectly matches the original problem only in the case where optimal controls exists while the Pontryagin’s maximum principle and the duality approaches consists in introducing a Lagrangian to tackle the constraint in expectation form. Both approaches are indirect, while ours tackles directly the original problem.
}
\vs5

% Organization
\no The rest of the paper is organized as follows. After defining some notations that will be used all over this paper, we dedicate Section \ref{sect_formulation} to the formal formulation of the stochastic control problem under an expectation constraint and to  the derivation of the associated PDEs. In Section \ref{sect_regularization}, we introduce different approximations that aim at regularizing these PDEs so that numerical methods can be applied appropriately. The deep-leaning-based numerical algorithm is presented in Section \ref{sect_algo}. The example of application within the context of life insurance is provided in Section \ref{sect_toy example}. Last but not least, part of the proofs are collected  in Section \ref{sect_proofs}. 
\vs5

% Notation
\no {\bf General notations :} All over this paper, we adopt the following notations. For any $\k \in \N$, any element of $\R^\k$ is viewed as a column vector. $|\cdot|$ is the Euclidean norm of a vector or a matrix, and $^T$ denotes the transposition. $\M^{k}$ (resp.~$\S^\k$) is the set of (resp.~symmetric) square matrix of dimension $\k \times \k$. Given any smooth function $\vphi : (t,x,y) \in \R_+ \times \R^\k\times \R \to \vphi(t,x,y) \in \R$, let $\partial_t \vphi$ be its derivative with respect to its $t$-variable, and let $D_x\vphi$ (resp. $D_y\vp$) and $D^2_x \vphi$ (resp. $D^2_y\vp$)  be its Jacobian and Hessian matrix with respect to the $x$-component (resp. $y$-component). It $\vphi$ only depends on $(t,x)$, we simply write $D\vp$ and $D^2\vp$ for $D_x\vp$ and $D^2_x\vp$. $\|\vp\|_\infty$ denotes its sup-norm. We use the standard notation $C^{2}(\R^{d})$ (resp.~$C^{2}_{b}(\R^{d})$) for the collection of maps on $\R^{d}$ that are twice continuously differentiable (resp.~and bounded as well as their first and second order derivatives). We extend the above to maps on $B\subset [0,T)\x \R^{d}$ by considering $C^{1,2}(B)$ (resp.~$C^{1,2}_{b}(B)$) defined in an obvious way. For a set $\Oc \subset \R^\k$, $\inte \Oc$ and $\partial \Oc$ denote its interior and boundary, $\cl \Oc$  denotes its closure. Given  $r>0$ and $x \in \R^\k$, $B_r(x)$ is the open ball with radius $r$ and center $x$, $B_r$ is the shorthand for the open ball centered at the origin.  \\

From now on, we consider a finite time horizon $T > 0$ and a complete probability space $(\Omega, \Fc, \P)$ equipped with the $\P$-augmented filtration generated by a $d$-dimensional Brownian motion $W = (W_t)_{0 \leq t \leq T}$ for some $d \geq 1$. All inequalities involving random variables are taken in the $\P$-a.s.~sense.  

% ---------------------------------
%%% Stochastic Control Problem in smooth boundary case %%%
%----------------------------------
\section{The stochastic control problem in the smooth boundary case} \label{sect_formulation}

% Problem statement 
\subsection{Problem statement and alternative formulations} \label{subsect_problem statement}

    % Set-up
\no  Let $\Uc$ be the collection of predictable processes $\v$ taking values in $U$, a compact subset of $\R^{d'}$, for some $d'\ge 1$. For  $t \in [0,T]$, $x \in \Rd$ and $\v \in \Uc$, we define $X\txv$ as the strong solution of the stochastic differential equation 
\begin{equation} \label{dyn_X}
	X\txv(s) = x + \int_{t}^{s}\muxr dr + \int_{t}^{s} \sigxr dW_r, \; t \leq s \leq T
\end{equation} 
in which the map $(\mu,\sigx):\Rd \times U \rightarrow \R \times \M^d$ is bounded, continuous, and Lipschitz continuous in its first variable uniformly in the second one. Time dependence of these parameters could be added in an obvious way. We refrain from doing this for ease of notations. 

\begin{Remark}\label{rem : moments X} It follows from standard arguments that for any $p\ge 1$ there exists $C_{p}>0$ such that, for any $\v\in \Uc$ and $(t,x), (t',x')\in [0,T]\x \R^{d}$,
$$
\Ex{\sup_{[t,T]} |X\txv-X_{t',x'}^{\v}|^{p}}^{\frac1p}\le C_{p}(|x-x'|+|t-t'|^{\frac12}).
$$
\end{Remark}
\vs2

    % Problem statement
\no The aim of this paper is to provide a PDE characterization of the value function of the optimal  control problem under expectation constraint: 
\begin{equation}\label{prob_ori_intro}
	V(t,x,y,p) := \sup_{\nu \in \Uc(t,x,y,p)} \Ex{{\rm F}^\nu_{t,x}} 
\;\mbox{ where }\; \Uc(t,x,y,p):=\{\v \in \Uc : \Ex{{\rm G}^\nu_{t,x,y}} \leq p\}
\end{equation} 
where
\begin{align*}
{\rm F}^\nu_{t,x}:=F(X^{\nu}_{t,x}(T))
+\int_t^T f(X^{\nu}_{t,x}(s),\nu_s)ds,\;{\rm G}^\nu_{t,x,y}&:=G(X^{\nu}_{t,x}(T))
+Y^\nu_{t,x,y}(T)
\end{align*}
with 
$$
Y^\nu_{t,x,y}:=y+\int_t^{\cdot} g(X^{\nu}_{t,x}(s),\nu_s)ds.
$$
 
The reason for introducing the running cost $g$ in the expectation constraint in the form of the process $Y^\nu_{t,x,y}$ is only for ease of exposition. We avoid incorporating $Y$ into the $X$ process because we shall assume later on that this diffusion has an uniformly elliptic quadratic variation coefficient. 

We assume that $F,f,G$ and $g$ are  continuous maps from $\Rd\x U \to \R$ with polynomial growth, which implies that the expectations  are well defined for every $\v \in \Uc$, by Remark \ref{rem : moments X}. 
\vs2

    % Martingale representation 
Let us now follow the steps of \cite{bouchard_optimal_2010,bouchard_stochastic_2010,bouchard_weak_2012} by introducing the collection $\Ac$ of  $\R^d$-valued predictable processes $\a$ such $\int_{0}^{T} |\alpha_{s}|^{2}ds <\infty$ and 
$$
\Ptpa  := p + \int_{t}^{\cdot} \a_r^\top dW_r
$$
is a martingale (a condition that obviously does not depend on $p$) on $[t,T]$, for all $t\le T$. Then, using the martingale representation theorem as in e.g.~\cite[Proposition 3.1]{bouchard_stochastic_2010}, we deduce that 
\begin{align*} 
\Dc:=&\{(t,x,y,p) \in [0,T]\x \R^{d+2}: \Ex{{\rm G}^\nu_{t,x,y}} \leq p \mbox{ for some } \nu \in \Uc\}
\\
=&\{(t,x,y,p) \in [0,T]\x \R^{d+2} : {\rm G}^\nu_{t,x,y}\leq \Ptpa(T) \Pas \mbox{ for some } (\nu,\a) \in \Uc\x \Ac\}. 
\end{align*}
It then follows  that 
\begin{equation}\label{prob_ori_sans borne}
	V(t,x,y,p) := \sup \left\{\Ex{{\rm F}^\nu_{t,x}}:\;(\v,\a) \in \hat \Uc(t,x,y,p)\right\}
\end{equation} 
with 
$$
\hat \Uc(t,x,y,p) :=\{(\v,\a)\in \Uc\x \Ac : {\rm G}^\nu_{t,x,y}\leq \Ptpa(T)   \Pas \},
$$
thus leading to a state constraint for the augmented system $ (X\txv,Y^\nu_{t,x,y},\Ptpa)$. 

        % rewrite the domain boundary to include martingale representation
\begin{Remark}\label{rem : int de D} Define 
\begin{align}\label{eq: def w} 
w(t,x,y):=\inf_{\nu \in \Uc}\Ex{{\rm G}^\nu_{t,x,y}},\;(t,x,y)\in [0,T]\x \R^{d+1}.
\end{align}
One easily checks that $w$ is continuous thanks to the Lipschitz continuity of the coefficients of \eqref{dyn_X} and our continuity and growth assumptions on $G$ and $g$. Then, the parabolic interior of $\Dc$ is given by
\begin{align*} 
\inte_{P} \Dc:=&\{(t,x,y,p)\in [0,T]\x \R^{d+2}: (t,x,y,p)+([0,\eps]\x [-\eps,\eps]^{d+2})\subset \Dc,\; \mbox{for some $\eps>0$}\} \\
=&\{(t,x,y,p)\in [0,T)\x \R^{d+2} : p > w(t,x,y)  \}  
\end{align*}
Upon existence of an optimal control associated with $w$, so that $\Dc$ is actually equal to $\{(t,x,y,p)$ $\in $ $[0,T]\x \R^{d+2} :$  $p \ge w(t,x,y)  \}$, it follows from the geometric dynamic programming principle in \cite{soner_dynamic_2002} that $\Ptpa(T)\ge {\rm G}^\nu_{t,x,y}$ if and only if
$\Ptpa\ge w(\cdot,X^\nu_{t,x},Y^\nu_{t,x,y})$ on $[t,T]$. See also Remark \ref{remark_aborbant_boundary} below. Hence, the state constraint actually applies on the whole path of the processes. When the existence of an optimal control is not guaranteed, the reverse implication requires in general a strict inequality.  
\end{Remark}

        % admissibility of the augmented control 
\begin{Remark}\label{rem : alpha carre inte}
Since ${\rm G}^\nu_{t,x,y}$ is square integrable under our assumptions, if $\Ptpa(T)\ge {\rm G}^\nu_{t,x,y}$ for some $\alpha \in \Ac$, then the same holds for some $\alpha$ satisfying $\E[\int_t^T |\alpha_s|^2ds]<\infty$. Again, this follows from the martingale representation theorem and the fact that $p\ge \E[{\rm G}^\nu_{t,x,y}]$ as $\Ptpa$ is a martingale. One could therefore restrict the set of admissible controls to those that are square integrable.

In \cite[Section 4]{bouchard_weak_2012}, the set of admissible controls in the definition of $V(t,\cdot)$ is defined as controls that are only adapted to the filtration generated by $W_{\cdot \vee t}-W_{t}$. However, as explained in \cite[Remark 5.2]{bouchard2011weak}, a standard randomization argument implies that the value functions coincide.
\end{Remark}

    % Truncation of the problem to remove variable Y
\begin{Remark} \label{rem : V without y} As mentioned above we introduced the process $Y$ corresponding to the running cost only for ease of exposition. We shall however derive the PDEs only for the case $y=0$, using the fact that : 
\begin{equation}\label{eq: def varpi}
\varpi(t,x):=w(t,x,0)=w(t,x,y)-y
\end{equation} 
and
\begin{equation} \label{eq : def V_bar}
V(t,x,y,p)=V(t,x,0,p-y)=:\bar V(t,x,p-y).
\end{equation}
The viability domain associated with $\bar V$ and $\varpi$ is given by 
$$
\bar \Dc:= \{(t,x,p)\in [0,T]\x \R^{d+1} : (t,x,0,p)\in \Dc\}
$$ 
whose parabolic interior is
\begin{align*} 
\inte_{P} \bar \Dc:=&\{(t,x,p)\in [0,T)\x \R^{d+1} : p > \varpi(t,x)  \}.  
\end{align*}

\end{Remark}

% Result on smoothness of the boundary 
\subsection{Smoothness of the boundary and associated Hamilton-Jacobi-Bellman equation}
    % Recall PDE characterization of value function V
It follows from  \cite{bouchard_optimal_2010,bouchard_weak_2012}  that $V$ solves (in the viscosity solution sense) a Hamilton-Jacobi-Bellman equation on $\inte_P \Dc$ associated with the operator
$$
H(t,x,q,q_y,A):= -\sup_{(u,a)\in U\x \R^{d}} \crl{ \hat \mu(x,u)^{\top} q + \frac12 {\rm Tr}\left[(\hat\sigma\hat\sigma^{\top})(x,u,a)A\right]+g(x,u)q_y+f(x,u)}
$$
defined for $(t,x,q,q_y,A)\in [0,T]\x \Rd\x \R^{ {2d}}  \x \R\x \S^{ {2d}}$ with
$$
\hat \mu(\cdot,u):=\left(\begin{array}{c}\mu(\cdot,u)\\ 0_{\Rd}\end{array}\right) \mbox{ and } \hat \sigma(\cdot,u,a):=\left[\begin{array}{c}\sigma(\cdot,u) \\ a^\top\end{array}\right],\;(u,a)\in U\x \R^{ {d}},
$$
in which $0_{\Rd}$ is the vector of $\R^d$ with all entries equal to $0$.
Because of Remark \ref{rem : V without y}, this translates immediatly into a PDE for $\bar V$.
We let $H^{*}$ and $H_{*}$ be the upper- and lower-semicontinuous envelopes of $H$, and $\bar V^{*}$ and $\bar V_{*}$  be the upper- and lower-semicontinuous envelopes of $\bar V$. In the following, we make use of the spatial boundary before $T$:
$$
\partial_{<T} \bar \Dc:=\{(t,x,p)\in \partial \bar \Dc,\;t<T\} = \{(t,x,p) \in [0,T) \x \R^{d+1} : p = \varpi(t,x) \}.
$$
\begin{Theorem}[{\cite[Theorem 4.2]{bouchard_weak_2012}}]\label{thm : pde interior domain} $\bar V_{*}$  is a viscosity supersolution  of 
\begin{align*} 
-\partial_{t}\vp + H^{*}(\cdot,D_{(x,p)}\vp,-D_p \vp,D^{2}_{(x,p)}\vp)&\ge 0 \mbox{ on $\inte_{P}\bar \Dc$},\\
\vp(T,\cdot)&\ge F \mbox{ on $\{(x,p)\in \R^{d+1}: p>G(x)\}$},
\end{align*}
and 
$\bar V^{*}$ is a viscosity  subsolution of
\begin{align*} 
-\partial_{t}\vp + H_{*}(\cdot,D_{(x,p)}\vp,-D_p \vp,D^{2}_{(x,p)}\vp)&\le 0 \mbox{ on $\inte_{P}\bar \Dc\cup \partial_{<T} \bar \Dc$}\\
\vp(T,\cdot)&\le F \mbox{ on $\{(x,p)\in \R^{d+1}: p\ge G(x)\}$}.
\end{align*}
\end{Theorem}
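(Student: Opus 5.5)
This result is imported from \cite[Theorem 4.2]{bouchard_weak_2012}. The plan is to re-derive it from the weak dynamic programming principle for problems with expectation constraints, applied to the augmented formulation \eqref{prob_ori_sans borne}. By Remark \ref{rem : V without y} it suffices to work with $\bar V$, keeping $Y$ only through the running term $g\,q_y$ with $q_y=-D_p\vp$.

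\textbf{Step 1: weak dynamic programming.} The first step is to state the two one-sided dynamic programming inequalities for the augmented state $(X,P)$.
\begin{itemize}
\item For the supersolution property: for any $(\nu,\a)\in\hat\Uc(t,x,0,p)$, any stopping time $\theta$, and any lower semicontinuous $\vp\le \bar V_*$, we have $\bar V(t,x,p)\ge \E[\vp(\theta,X\txv(\theta),\Ptpa(\theta)-Y(\theta))+\int_t^\theta f\,ds]$. This requires $(\theta,X,P-Y)$ to remain in $\inte_P\bar\Dc$, so that concatenated controls stay admissible.
\item For the subsolution property: $\bar V(t,x,p)\le \sup_{(\nu,\a)}\E[\vp(\theta,\dots)+\int_t^\theta f\,ds]$ for upper semicontinuous $\vp\ge \bar V^*$. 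Here the constraint at time $\theta$ is formulated through $\Ptpa(\theta)\ge w(\theta,X(\theta),Y(\theta))$, possibly with a small perturbation $p+\eps$ of the target.
\end{itemize}
The proof goes through measurable selection, or through the covering argument of \cite{bouchard2011weak}, together with the martingale representation identity for $\Dc$. Lower semicontinuity of $w$, which follows from its continuity (Remark \ref{rem : int de D}), is what makes the covering work.

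\textbf{Step 2: supersolution.} Let $(t_0,x_0,p_0)\in\inte_P\bar\Dc$ be a strict minimum of $\bar V_*-\vp$. Fix a constant control $(u,a)\in U\times\R^d$; this is admissible locally because $p>\varpi$ leaves room. Take $\theta$ to be the exit time from a small ball contained in $\inte_P\bar\Dc$, and choose $(t_n,x_n,p_n)\to(t_0,x_0,p_0)$ with $\bar V(t_n,x_n,p_n)\to\bar V_*(t_0,x_0,p_0)$. Applying Itô's formula in the usual contradiction argument gives $-\partial_t\vp-\hat\mu^\top D\vp-\tfrac12{\rm Tr}[\hat\sigma\hat\sigma^\top D^2\vp]+g D_p\vp-f\ge0$ for every $(u,a)$, hence the inequality with $H$. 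Since $H$ may equal $-\infty$ ($a$ is unbounded), the statement is written with $H^*$.

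For the terminal condition, at $p>G(x)$ the constant control $\a=0$ is admissible near $T$. Combined with the continuity estimates of Remark \ref{rem : moments X}, this gives $\bar V_*(T,\cdot)\ge F$.

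\textbf{Step 3: subsolution.} Assume $\bar V^*-\vp$ has a strict maximum at a point of $\inte_P\bar\Dc\cup\partial_{<T}\bar\Dc$, and suppose $-\partial_t\vp+H_*(\cdot)>0$ in a neighbourhood. The definition of $H_*$ then gives a uniform bound on the drift of $\vp$ along every $(\nu,\a)$ in that neighbourhood. Stopping at the exit time of the neighbourhood and using the second inequality of Step 1 yields a strict gap, which is a contradiction. Boundary points are allowed here because the subsolution argument only needs admissible controls to exist, not room inside the domain.

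For the terminal condition, we use $\bar V\le \E[F(X(T))+\int f\,ds]$ together with moment bounds and uniform integrability as $t\to T$.

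\textbf{Main obstacle.} I expect the main difficulty to be Step 1, in particular the subsolution version, where admissibility at $\theta$ must be encoded through $w$ without assuming an optimal control exists. I would handle this by working with $p+\eps$ and then letting $\eps\downarrow0$, using the upper semicontinuity of $\bar V^*$.
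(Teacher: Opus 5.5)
Your overall route is the one behind \cite[Theorem 4.2]{bouchard_weak_2012}, which the paper cites without giving a proof of its own. That route is a weak dynamic programming principle for the augmented state $(X,P-Y)$, followed by two test-function arguments: constant controls $(u,a)$ in the interior for the supersolution, and, for the subsolution, $H\ge H_*$ plus lower semicontinuity of $H_*$, which give a drift bound uniform in $(\v,\a)$.

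One step fails as written. In the terminal condition of Step 2, $\a=0$ is not admissible near $T$. With $P\equiv p_n$, the constraint defining $\hat\Uc$ requires $G(X(T))+Y(T)\le p_n$ almost surely, whereas $p_n>G(x_n)$ only controls $G(X(T))$ in mean or in probability. Use the expectation form \eqref{prob_ori_intro} instead. For a fixed $\v\in\Uc$, Remark \ref{rem : moments X} gives $\E[{\rm G}^\v_{t_n,x_n,0}]\to G(x_0)<p_0$. Hence $\v$ is admissible at $(t_n,x_n,p_n)$ for $n$ large, and $\E[{\rm F}^\v_{t_n,x_n}]\to F(x_0)$. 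Equivalently, take $\a$ from the martingale representation of ${\rm G}^\v_{t_n,x_n,0}$.

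Two statements in Step 1 also need correcting.

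\textbf{Semicontinuity of $\vp$.} The concatenation inequality holds for \emph{upper} semicontinuous (in practice continuous) $\vp\le\bar V$, not for lower semicontinuous $\vp\le\bar V_*$. The covering step needs $\vp(t_i,x_i,p_i)\ge\vp(t',x',p')-\eps$ near each centre. With $\vp=\bar V_*$, that inequality is not what the covering argument delivers. This is harmless in your proof, since you only apply it to smooth test functions.

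\textbf{Location of the main obstacle.} It sits on the wrong side. The $\le$ inequality is a conditioning argument: $\Ptpa(\theta)\ge\E[{\rm G}^\v_{t,x,0}|\Fc_\theta]$ makes the conditional control admissible at level $\Ptpa(\theta)-Y(\theta)$. This involves neither $w$ nor existence of optimal controls. Any relaxation of the level is needed on the concatenation side instead, to keep the pasted $\eps$-optimal controls admissible at states near the centres. In the interior you get this by choosing those controls at level $p_i-\delta>\varpi(t_i,x_i)$. You then use continuity of $(t,x)\mapsto\E[{\rm G}^{\v}_{t,x,0}]$ for fixed $\v$. Semicontinuity of $w$ plays no role there beyond openness of $\inte_P\bar\Dc$.

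With these corrections the argument goes through.
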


    % Result on smoothness of w   
In order to obtain a suitable  Dirichlet condition on the boundary of $\bar \Dc$, 
we assume that  $G$ is smooth  and that $\sigma$ is uniformly elliptic. We also assume that $g$ is H\"older continuous (and bounded for simplicity).
\begin{Assumption} \label{assum_g_holder}
	$G \in C^2_{b}(\Rd)$, $g$ is bounded, and $D^2G$ and $g(\cdot,u)$ are H\"older continuous on $\Rd$, uniformly in $u\in U$.
\end{Assumption}
\begin{Assumption} \label{assum_sigma_unif_elliptic}
There exists $0<\lambda_{\sigma}\le \Lambda_{\sigma}$ such that 
$$
\lambda_{\sigma}\le z^{\top} (\sigma\sigma^{\top})(x,u)z \le  \Lambda_{\sigma},\;\forall\;(x,u,z)\in \R^{d}\x U\x \partial B_{1}.
$$
\end{Assumption}
Then, standard parabolic theory arguments imply that $\varpi$ 
is a smooth solution of the Hamilton-Jacobi-Bellman equation associated with \eqref{eq: def w}:
\begin{align}\label{eq: HJB w}
0=- \inf_{u\in U} \left(\Lc_{X}^{u} \varpi+g \right) \;\mbox{ on } [0,T)\x \R^{d}, 
 \end{align} 
in which 
$$
\Lc_{X}^{u}\vp:=\partial_{t} \vp+\mu(\cdot,u)^{\top} D\vp +\frac12{\rm Tr}\left[(\sigma\sigma^{\top})(\cdot,u)D^{2}\vp\right], 
$$
for a smooth function $\vp$ and $u\in U$.
\begin{Proposition}\label{prop: w smooth} Let Assumptions \ref{assum_g_holder} and  \ref{assum_sigma_unif_elliptic} hold, then $\varpi$ is continuous, it belongs to $C^{1,2}([0,T)\x \R^{d})\cap C^{0,1}_{b}([0,T]\x \R^{d}) $. $D^2\varpi$ is locally H\"older continuous on $[0,T)\x \R^{d}$. Moreover, $\varpi$ solves  \eqref{eq: HJB w} and satisfies $\varpi(T,\cdot)=G$ on $\Rd$.
\end{Proposition}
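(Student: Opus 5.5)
Since $\varpi(t,x)=\inf_{\nu\in\Uc}\E[G(X\txv(T))+\int_t^T g(X\txv(s),\nu_s)ds]$ is the value function of a standard finite-horizon control problem without constraint, the plan is to combine its viscosity characterization with the Krylov--Safonov/Evans--Krylov regularity theory for uniformly parabolic concave (here convex, since we take an infimum) fully nonlinear equations, and then to identify the classical solution with $\varpi$ by comparison.

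\emph{Step 1: a priori regularity from the control representation.} Remark \ref{rem : moments X}, together with the fact that $G\in C^2_b$ is Lipschitz and $g$ is bounded, gives $|\varpi(t,x)-\varpi(t',x')|\le C(|x-x'|+|t-t'|^{1/2})$ and $\|\varpi\|_\infty\le \|G\|_\infty+T\|g\|_\infty$. To get a bounded spatial gradient up to $T$ we use the Lipschitz bound in $x$, with constant $C_1\|DG\|_\infty$ plus a contribution from $g$. The running cost $g$ is only H\"older in $x$, so we handle it via the H\"older moment estimate, which is acceptable; otherwise we use the parabolic estimates below. This yields $\varpi\in C^{0,1}_b([0,T]\x\Rd)$ and $\varpi(T,\cdot)=G$. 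Standard dynamic programming shows that $\varpi$ is a continuous viscosity solution of \eqref{eq: HJB w} with terminal condition $G$, and comparison among bounded continuous viscosity solutions holds because the coefficients are Lipschitz and bounded.

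\emph{Step 2: classical solvability.} The operator $(x,q,A)\mapsto-\inf_u\{\mu(x,u)^\top q+\frac12{\rm Tr}[\sigma\sigma^\top(x,u)A]+g(x,u)\}$ is convex in $A$ and uniformly elliptic by Assumption \ref{assum_sigma_unif_elliptic}. Its dependence on $x$ is H\"older, because $\mu$ and $\sigma$ are Lipschitz and $g$ is H\"older by Assumption \ref{assum_g_holder}. We invoke Krylov's theorem (or Lieberman, Ch.~14, or Fleming--Soner IV.4 with Krylov's results) on bounded-smooth approximating domains $B_R$ with boundary data $\varpi$. The terminal data $G\in C^{2+\beta}_b$ gives regularity up to $t=T$, although we only claim $C^{1,2}$ on $[0,T)$. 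This yields a $C^{1+\beta/2,2+\beta}_{loc}$ solution $v_R$ that coincides with $\varpi$ on $B_R$ by local comparison, because $\varpi$ is a continuous viscosity solution with those boundary values. Alternatively, we solve the Cauchy problem on the whole space directly in $C^{1,2}$ with bounded data. Interior Evans--Krylov estimates give local H\"older continuity of $D^2\varpi$ and $\partial_t\varpi$ with bounds depending only on $\|\varpi\|_\infty$, ellipticity and H\"older norms, so we may let $R\to\infty$. The Evans--Krylov step requires the coefficients to be H\"older in $x$; if the literature version needs them to be H\"older in $u$-uniform form, our assumptions provide exactly that.

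\emph{Main obstacle.} The delicate point is citing a regularity result valid for an infimum over a compact but arbitrary control set with merely continuous dependence on $u$ and merely H\"older $g$, and making the identification with the viscosity solution $\varpi$. If a direct citation is unavailable, we first regularize $\mu,\sigma,g$ by mollification in $x$, which preserves the ellipticity and H\"older constants. We then apply the smooth-coefficient theory and obtain estimates uniform in the mollification parameter from Evans--Krylov. Finally we pass to the limit using stability of the control value functions, which follows from Remark \ref{rem : moments X}-type estimates, and Arzel\`a--Ascoli in $C^{1,2}_{loc}$.
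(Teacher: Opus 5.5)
\textbf{Gap: the global gradient bound.} The statement includes $\varpi\in C^{0,1}_b([0,T]\x\Rd)$, that is, $D\varpi$ bounded on all of $[0,T]\x\Rd$. This is what later gives $C_\varpi<\infty$.

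Your Step~1 does not deliver it. For the running cost, Remark~\ref{rem : moments X} together with H\"older continuity of $g$ only gives $\E[\int_t^T|g(X^\nu_{t,x}(s),\nu_s)-g(X^\nu_{t,x'}(s),\nu_s)|ds]\le C|x-x'|^{\beta}$. That is H\"older continuity, not Lipschitz continuity, so the H\"older moment estimate is not ``acceptable'' here.

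The fallback to ``the parabolic estimates below'' does not close the gap as written either. The Evans--Krylov estimates you invoke are interior ones, on cylinders $[t,t+\rho^2]\x B_\rho(x)$ with $\rho^2<T-t$. Fed with the information you actually have on $\varpi$, they bound $|D\varpi(t,x)|$ only by quantities that blow up as $t\uparrow T$. The regularity up to $t=T$ you mention for the problems on $B_R$ is not made uniform in $R$ or in the spatial center, and you explicitly do not use it.

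The missing step is the paper's opening move. Set $\tilde w:=\varpi-G$. It solves an equation of the same form with zero terminal condition and source $r=\Lc^u_XG+g$. This source is bounded and H\"older continuous precisely because $D^2G$ is (Assumption~\ref{assum_g_holder}), and boundedness of $r$ gives $|\tilde w(t,\cdot)|\le C(T-t)$. Take the rescaled interior $C^{1,\alpha}$ (or Evans--Krylov) estimate on the cylinder of radius $\rho=\sqrt{(T-t)/2}\wedge 1$. It gives $|D\tilde w(t,x)|\le C(\rho^{-1}(T-t)+\rho)\le C$, uniformly in $(t,x)$. Estimates up to the flat boundary $\{t=T\}$, uniform in the center, would work as well. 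Then $\|D\varpi\|_\infty\le\|D\tilde w\|_\infty+\|DG\|_\infty<\infty$.

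Do not simply copy the paper's probabilistic bound \eqref{bound_Dv} in its step a.1 to fill this hole. As written, it is linear in $|X^\nu_{t,x+he_i}-X^\nu_{t,x}|$ and so tacitly treats $r$ as Lipschitz. With only H\"older $g$ and $D^2G$, a PDE estimate of the above type is the robust way to obtain the bound.

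\textbf{Comparison with the paper's route.} Apart from this, your route is viable and genuinely different from the paper's.

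The paper never uses viscosity theory. It proceeds as follows:
\begin{itemize}
\item It localizes with exit-time control problems $\tilde w_k$ on $B_k$ having \emph{zero} Dirichlet data, with classical solvability from Lieberman's Chapter~14 plus verification.
\item It derives $k$-uniform bounds: probabilistic bounds for $\partial_t\tilde w_k$ and $D\tilde w_k$, using an exit-time moment estimate; Krylov--Safonov after linearization for H\"older continuity of the first derivatives; and a Schauder-type estimate plus interpolation for $D^2\tilde w_k$.
\item It passes to the limit by Arzel\`a--Ascoli and verification.
\end{itemize}

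You instead use the DPP and comparison of viscosity solutions to identify $\varpi$ with the classical solution of the Dirichlet problem on $[0,T)\x B_R$ whose lateral data is $\varpi$ itself. This makes the identification local. For every $R$ you get directly that $\varpi\in C^{1,2}$ with locally H\"older $D^2\varpi$ on $[0,T)\x B_R$. So no $R$-uniform estimate and no limit $R\to\infty$ are needed for the local parts of the statement.

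The price has two parts. First, the lateral data $\varpi$ is only H\"older, so classical solvability must come from approximating it by smooth data. You then use the maximum principle for uniform convergence and interior estimates for regularity; this is standard but should be said. Second, the comparison step needs $g$ to be uniformly continuous in $x$, uniformly in $u$; Assumption~\ref{assum_g_holder} provides this.
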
 
The proof is postponed to Section \ref{sec: smoothness w}. 
\vs5

    % Absorbing boundary 
\begin{Remark} \label{remark_aborbant_boundary}
Upon existence of an optimal control, the smoothness of $\varpi$, and therefore $w$, ensures that the boundary of $ \Dc$ is absorbing. Indeed,  given $(t,x,y,p)\in \Dc$, let $(\nu,\alpha)\in \Uc\x \Ac$ be such that $P^{\alpha}_{t,p}(T)\ge {\rm G}^\nu_{t,x,y}$. Set $\tau:=\inf\{s\ge t: P^{\alpha}_{t,p}(s)=w(s,X^{\nu}_{t,x}(s),Y^{\nu}_{t,x,y}(s))\}\wedge T$. Since $w(\cdot,X^{\nu}_{t,x},Y^{\nu}_{t,x,y})$ is a submartingale, by the dynamic programming principle, while $P^{\alpha}_{t,p}$ is a martingale, the inequality $P^{\alpha}_{t,p}(T)\ge {\rm G}^\nu_{t,x,y}=w(T,X^{\nu}_{t,x}(T),Y^{\nu}_{t,x,y}(T))$ implies 
$$
P^{\alpha}_{t,p}(\theta)=\E[P^{\alpha}_{t,p}(T)|\Fc_{\theta}]\ge \E[w(T,X^{\nu}_{t,x}(T),Y^{\nu}_{t,x,y}(T))|\Fc_{\theta}] \ge w(\theta,X^{\nu}_{t,x}(\theta),Y^{\nu}_{t,x,y}(\theta))
$$
for all stopping time $\theta$ taking values in $[\tau,T]$. But, taking expectation again, 
$$
P^{\alpha}_{t,p}(\tau)=\E[P^{\alpha}_{t,p}(\theta)|\Fc_{\tau}]\ge \E[w(\theta,X^{\nu}_{t,x}(\theta),Y^{\nu}_{t,x,y}(\theta))|\Fc_{\tau}]\ge w(\tau,X^{\nu}_{t,x}(\tau),Y^{\nu}_{t,x,y}(\tau))=P^{\alpha}_{t,p}(\tau)
$$
so that $P^{\alpha}_{t,p}(\theta)=w(\theta,X^{\nu}_{t,x}(\theta),Y^{\nu}_{t,x,y}(\theta))$. This   shows that $w(\cdot,X^{\nu}_{t,x},Y^{\nu}_{t,x,y})$ has to admit a martingale decomposition from $\tau$ on. By \eqref{eq: HJB w}, the above is possible only if $\nu \in \Ur(\cdot, X^{\nu}_{t,x})$ and $\alpha
^\top=D_{x}w(\cdot, X^{\nu}_{t,x}, Y^{\nu}_{t,x,y})\sigma(X^{\nu}_{t,x},\nu)$ $=D\varpi(\cdot, X^{\nu}_{t,x})\sigma(X^{\nu}_{t,x},\nu)$ from $\tau$ on, in which 
$$
\Ur(t',x'):={\rm arg}\min\{ \Lc_{X}^{u} \varpi(t',x')+g(x',u): u\in U\},\;(t',x')\in [0,T)\x \R^{d}.
$$ 
\end{Remark}

From the above Remark,  we expect that the PDE solved by 
$$
\Vc:(t,x,y)\in [0,T]\times \Rd \mapsto  V(t,x,y,w(t,x,y))
$$ can be deduced 
 from the Hamilton-Jacobi-Bellman equation of Theorem \ref{thm : pde interior domain} with controls restricted to $\{(u,a)\in \Ur(t,x)\x\{(\sigma(x,u)^\top D\varpi(t,x)\}\}$ at $(t,x)$.  

     % Assumption on the optimal control on the boundary (in preparation for the result on Vb)
This is the result of Theorem \ref{thm : dirichlet condition} below, in which the supersolution property requires an additional assumption.

\begin{Assumption} \label{assum_selection_unique_opti} For all $(t_{0},x_{0}) \in [0,T)\x \R^{d}$ and  $u_0 \in \Ur\txo$, there exists a $U$-valued continuous map $\hat {\rm u}$ on $[0,T)\x \R^{d}$ and $\eps\in (0,t_{0})$ such that 
\begin{align}\label{eq: def U(t,x)} 
\hat {\rm u}(t_0,x_0)=u_0\;\mbox{ and }\;\hat {\rm u}(t,x)\in \Ur(t,x) \;\mbox{ for all } (t,x)\in [t_{0}-\eps,T)\x \R^{d},
\end{align}
and  the stochastic differential equation 
 \begin{align}\label{def hat X dans Assumptio}
 \hat X_{t,x}(s) = x + \int_{t}^{s}\mu(\hat X_{t,x}(r),\hat {\rm u}(r,\hat X_{t,x}(r))) dr + \int_{t}^{s} \sigma(\hat X_{t,x}(r),\hat {\rm u}(r,\hat X_{t,x}(r))) dW_r, \; t \leq s \le T,
  \end{align}
 admits a unique strong solution, for all $(t,x)\in [t_{0}-\eps,T)\x \R^{d}$.
 \end{Assumption}

\begin{Remark} Given Remark \ref{rem : moments X}, it suffices that $\hat \ur$ is locally Lipschitz in its space variable to ensure that $ \hat X_{t,x}$ is well-defined. It is proved by a standard localization argument.
We refer to e.g. \cite{veretennikov1981strong} for more general sufficient conditions. In particular, existence holds if $\sigma$ does not depend on $u$, $\mu$ is locally bounded, and $\hat {\rm u}$ is only measurable. See also Section \ref{sect_approximation cas non regulier} for possible approximation arguments ensuring that $\hat \Ur(t,x)=\{\hat \ur(t,x)\}$ for all $(t,x)$, in which $\hat \ur$ is continuous or even locally Lipschitz.
\end{Remark}

    % Result for value function at the boundary Vb
In the following, we let 
\begin{align*}
\bar \Vc_{*}&:=\bar V_{*}(\cdot,\varpi)\mbox{ and } \bar \Vc^{*}:=\bar V^{*}(\cdot,\varpi)
 \end{align*}
 be the upper- and lower semicontinuous envelopes of $\bar V$ when reaching the boundary.  
 
  \begin{Remark}\label{rem : Vc decroi en p} Since $p\mapsto \bar V(\cdot,p)$ is non-decreasing, we have 
\begin{align*}
\bar \Vc_{*}(t,x)&=\liminf\{\bar V(t',x',\varpi(t',x')):  [0,T]\x \R^{d}\ni (t',x')\to (t,x)\}\\
\bar\Vc^{*}(t,x)&=\limsup\{\bar V(t',x',\varpi(t',x')+\eps'):  [0,T]\x \R^{d}\x (0,\infty)\ni (t',x',\eps')\to (t,x,0)\}
 \end{align*}
 for all $(t,x)\in [0,T]\x \R^{d}$.
 \end{Remark}
 
 \begin{Theorem}\label{thm : dirichlet condition} Let the conditions of Proposition \ref{prop: w smooth}  hold. Then,  $\bar\Vc^{*}$  is a viscosity  subsolution of 
\begin{align} 
-  \max_{u\in \Ur(t,x)}\left(\Lc_{X}^{u} \vp(t,x)+f(x,u)\right)&=0,\;(t,x)\in [0,T)\x \R^d \label{eq: pde Vc} \\
\vp(T,x)&=F(x), \;x\in \R^d.\label{eq: cond bord Vc}
\end{align}
 If moreover Assumption \ref{assum_selection_unique_opti} holds then $\bar\Vc_{*}$   is a viscosity supersolution of \eqref{eq: pde Vc}-\eqref{eq: cond bord Vc}.
\end{Theorem}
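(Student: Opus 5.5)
The subsolution property will be derived from the subsolution part of Theorem \ref{thm : pde interior domain}, which holds up to $\partial_{<T}\bar\Dc$, using that $\varpi$ is smooth by Proposition \ref{prop: w smooth}. The supersolution property will be proved directly from a dynamic programming argument along the boundary, using the feedback control $\hat\ur$ of Assumption \ref{assum_selection_unique_opti}.

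\emph{Subsolution.} Fix $(t_0,x_0)\in[0,T)\x\R^d$ and a smooth $\vp$ such that $\bar\Vc^*-\vp$ has a strict local maximum, equal to $0$, at $(t_0,x_0)$. Consider the penalized test function
$$
\psi_n(t,x,p):=\vp(t,x)+n\,(p-\varpi(t,x))-n^{2}(p-\varpi(t,x))^{2}-|t-t_0|^2-|x-x_0|^4 .
$$
Since $p\mapsto\bar V^*$ is non-decreasing, maximizing $\bar V^*-\psi_n$ over a compact neighbourhood in $\cl\bar\Dc$ gives points $(t_n,x_n,p_n)\to(t_0,x_0,\varpi(t_0,x_0))$. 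Write $\delta_n:=p_n-\varpi(t_n,x_n)\ge0$; standard estimates should give $n\delta_n\to0$ and $\bar V^*(t_n,x_n,p_n)\to\bar\Vc^*(t_0,x_0)$. We then apply the subsolution inequality of Theorem \ref{thm : pde interior domain} at $(t_n,x_n,p_n)$. In the operator, the $(u,a)$-term contains
$$
\tfrac12\Big(|a|^2D^2_p\psi_n+2a^\top\sigma^\top D_xD_p\psi_n\Big)=-n^2\big|a-\sigma^\top D\varpi\big|^2\,(1+o(1))+O(n)\,\big|a-\sigma^\top D\varpi\big| ,
$$
where $D^2_p\psi_n=-2n^2$. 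This forces the near-optimal $a$ to satisfy $a=\sigma(x_n,u)^\top D\varpi+O(1/n)$. For such $a$, the first-order part $n(\partial_t+\Lc)$ applied to $(p-\varpi)$ yields $-n(\Lc^u_X\varpi+g)$, because the $-D_p\psi_n$ slot multiplies $g$. We have $\Lc^u_X\varpi+g\ge0$ by \eqref{eq: HJB w}, with equality iff $u\in\Ur$. So for every $u$ with $\Lc^u_X\varpi+g\ge\eta>0$, the supremum contributes at most $-n\eta+O(1)\to-\infty$. Because $H_*$ is a lower envelope, we must handle the sup over $u$ near $\Ur(t_n,x_n)$ and use upper semicontinuity of $(t,x)\mapsto\Ur(t,x)$, which follows from continuity of $\Lc^u\varpi$. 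Passing to the limit gives $-\partial_t\vp-\max_{u\in\Ur(t_0,x_0)}(\Lc^u_X\vp+f)\le0$. The terminal condition $\bar\Vc^*(T,\cdot)\le F$ follows from the terminal condition in Theorem \ref{thm : pde interior domain}, since $\varpi(T,\cdot)=G$.

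\emph{Supersolution.} Let $\vp$ be smooth with $\bar\Vc_*-\vp$ attaining a strict local minimum, equal to $0$, at $(t_0,x_0)$, and suppose by contradiction that $-\partial_t\vp-(\Lc^{u_0}_X\vp-\partial_t\vp)-f(x_0,u_0)<0$ for some $u_0\in\Ur(t_0,x_0)$. Take $\hat\ur$ from Assumption \ref{assum_selection_unique_opti}. By continuity, $\Lc^{\hat\ur}_X\vp+f>0$ on a neighbourhood $B$. Starting from $(t,x)$ close to $(t_0,x_0)$, play $\nu:=\hat\ur(\cdot,\hat X_{t,x})$ and $\alpha^\top:=D\varpi\,\sigma(\hat X,\nu)$, with $p=\varpi(t,x)$. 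By Itô's formula and \eqref{eq: HJB w} with $\hat\ur\in\Ur$, we get $P^\alpha_{t,p}=\varpi(\cdot,\hat X_{t,x})-\int_t^\cdot g\,ds$, so the state remains on the boundary. This is admissible, since $D\varpi$ is bounded and so $\alpha$ is square integrable. The control is kept until the exit time $\theta$ from $B$ and then concatenated with an $\eps$-optimal control from $(\theta,\hat X(\theta),P(\theta))$, using the weak dynamic programming principle of \cite{bouchard_weak_2012}. Because the boundary is reachable, the lower bound $\bar V(\theta,\cdot)\ge\bar\Vc_*(\theta,\cdot)\ge\vp+\eta$ on $\partial B$ combined with Itô on $\vp$ yields $\bar V(t,x,\varpi(t,x))\ge\vp(t,x)+\eta'$, contradicting $\bar\Vc_*(t_0,x_0)=\vp(t_0,x_0)$ along a minimizing sequence. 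The terminal inequality is handled similarly near $T$, using the feedback until $T$.

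The main obstacle is the penalization in the subsolution step. One must control the unbounded $a$ via the $-n^2$ curvature, and get the envelope $H_*$ and the condition $n\delta_n\to0$ to cooperate so that only $u\in\Ur(t_0,x_0)$ survive. I would first check this with explicit local expansions.
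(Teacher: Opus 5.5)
\textbf{Supersolution part.} This part is correct. It uses the same ingredients as the paper: the feedback $\hat\ur$, $\alpha^\top=D\varpi\,\sigma(\hat X,\hat\ur)$, It\^o's formula with \eqref{eq: HJB w} to stay on the boundary, then the dynamic programming principle. You argue by contradiction with an exit time, whereas the paper proceeds directly on a time step of length $h_n=\sqrt{\gamma_n}$; the two are equivalent. Two small points:
\begin{itemize}
\item The martingale is $P^\alpha_{t,p}=\varpi(\cdot,\hat X_{t,x})+\int_t^\cdot g\,ds$. It is $P-Y$ that stays equal to $\varpi(\cdot,\hat X_{t,x})$.
\item The lower bound in the DPP must hold at $p=\varpi(t,x)$ itself, as in \cite[Theorem 6.1 and Remark 6.1]{bouchard_optimal_2010}, which the paper uses. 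A version at a relaxed level $p+\delta$ would only bound $\bar V$ off the boundary.
\end{itemize}

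\textbf{The gap: localization in the subsolution step.} Write $\delta:=p-\varpi(t,x)$. Then
$\bar V^*-\psi_n=\bar V^*-\vp-n\delta+n^2\delta^2+|t-t_0|^2+|x-x_0|^4$.
\begin{itemize}
\item The map $\delta\mapsto n\delta-n^2\delta^2$ is decreasing for $\delta>1/(2n)$, and $\bar V^*$ is non-decreasing in $p$.
\item Hence, on any fixed compact neighbourhood (where $\delta$ ranges up to some $r>0$), $\sup(\bar V^*-\psi_n)$ is of order $n^2r^2$ and is attained at the outer edge in $p$.
\item The $(t,x)$ localization terms also enter with the wrong sign.
\end{itemize}
So the maximizers do not converge to $(t_0,x_0,\varpi(t_0,x_0))$, the claim $n\delta_n\to0$ fails, and the maximum is not interior. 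Theorem \ref{thm : pde interior domain} therefore cannot be applied. This is not a sign typo. Switching to $+n^2\delta^2$ gives $D^2_p\psi_n>0$, and there $\sup_a=+\infty$ and $H_*=-\infty$. The real problem is that the curvature $n^2$ is too large relative to the slope $n$, so the concave penalty turns around at $\delta=1/(2n)$.

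\textbf{How to repair it.} The concave idea survives once slope and curvature are decoupled.
\begin{itemize}
\item Take $(t_n',x_n',p_n')$ with $\bar V^*(t_n',x_n',p_n')\to\bar\Vc^*(t_0,x_0)$ and $\gamma_n:=p_n'-\varpi(t_n',x_n')\downarrow 0$ (possible by Remark \ref{rem : Vc decroi en p}).
\item Set $\psi_n:=\vp+\Phi_n(p-\varpi)+|t-t_0|^2+|x-x_0|^4$ with, for instance, $\Phi_n(\delta)=\gamma_n^{-1/2}\delta-\delta^2$.
\item Then $\Phi_n(\gamma_n)\to0$ and $\Phi_n(\delta)\ge(\gamma_n^{-1/2}-r)\delta$ on the neighbourhood. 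The paper's localization argument then gives $\hat\delta_n\to0$ and $(\hat t_n,\hat x_n)\to(t_0,x_0)$.
\item For any concave $\Phi_n$, the $a$-terms combine with the $\Phi_n''D\varpi^\top D\varpi$ part of $D^2_x\psi_n$ into exactly $\frac12\Phi_n''(\delta)\,|a-(D\varpi\,\sigma)^\top|^2$, with no remainder.
\item Since $D^2_p\psi_n=-2<0$, $H$ is continuous there and $H_*=H$. The supremum in $a$ is attained at $a=(D\varpi\,\sigma)^\top$.
\item The inequality then reads $\sup_{u\in U}\{\Lc^u_X\vp+f-\Phi_n'(\hat\delta_n)(\Lc^u_X\varpi+g)\}\ge o(1)$ with $\Phi_n'(\hat\delta_n)\to\infty$. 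You can then conclude as you planned.
\end{itemize}

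\textbf{Comparison with the paper.} The paper uses the linear penalty $\gamma_n^{-1/2}(p-w)$. For it $D^2_p=D_xD_p=0$, so $a$ disappears entirely. Note, however, that $H_*=-\infty$ whenever $D^2_p\psi\ge0$. Some strictly concave correction in $p$ is therefore needed to read a finite inequality off Theorem \ref{thm : pde interior domain}. Your instinct on this point is right; only the scaling of your penalty is wrong.
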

The proof is postponed to Section \ref{sect_proof_PDE_Vc}. 

\begin{Remark}\label{rem : unicite edp cal V}
We refer to \cite{crandall1992user} for sufficient conditions for \eqref{eq: pde Vc}-\eqref{eq: cond bord Vc} to admit a comparison principle and a unique (continuous) solution. This will in particular be the case, in the class of functions with polynomial growth, if ${\rm U}(t,x)$ is a singleton for all $(t,x)\in [0,T)\x \Rd$ and $\hat {\rm u}$ is uniformly Lipschitz  continuous in its second argument on each compact subset of $[0,T)\times \Rd$. This is easily obtained by adapting the traditional proof in \cite{crandall1992user} with standard localization arguments as in e.g.~\cite[Proof of Proposition 4.12]{bouchard2009stochastic}. 

If comparison holds, then $\bar \Vc=\bar \Vc_*=\bar \Vc^*$ and the boundary condition for $\bar V$ on $\partial \bar D= \{(t,x,p) \in [0,T] \x \R^{d+1} : p = \varpi(t,x) \}$ is given by $\bar \Vc$, which complements Theorem \ref{thm : pde interior domain}.
\end{Remark}

% ---------------------------------
%%% Approx. by bounded controls & regular functions %%%
%----------------------------------

\section{Regularization}\label{sect_regularization}

In this section, we explain how  approximations can be used to   reduce the Hamilton-Jacobi-Bellman operator in Theorem \ref{thm : pde interior domain} to a continuous one or  recover the conditions of Assumptions \ref{assum_g_holder},  \ref{assum_sigma_unif_elliptic} and \ref{assum_selection_unique_opti} when they are not satisfied. It aims at ensuring that $\varpi$ is smooth and  recovering PDEs for which comparison holds and numerical schemes can be used.

% Approximation by bounded controls (imposing bounds on the Martingale control)
\subsection{Approximation by bounded controls} \label{sect_approximation control bornes}
    
    % Introduction of the auxiliary problem (where control a is bounded)
The Hamilton-Jacobi-Bellman operator in Theorem \ref{thm : pde interior domain} is not continuous \textit{a priori} because the control $\a$ for the augmented martingale process $M^\alpha$ is not bounded \textit{a priori}. In order to regularize the problem, and  be able to obtain a comparison result (which is important to ensure the convergence of numerical schemes), one can approximate $V$ by a sequence of auxiliary control problems with bounded controls. Without loss of generality (see Remark \ref{rem : V without y}), we set $y=0$ for this entire section, and hence the viability domain is restricted to 
$$
\bar \Dc = \crl{(t,x,p) \in \domd : \exists (\v,\a) \in \Uc \x \Ac \st G^{\v}_{t,x,0} \leq \Ptpa(T) \mbox{ a.s}}
$$ in which the geometric dynamic principle applies upon $(\cdot,X\txv,\Ptpa)$. Recall from Proposition \ref{prop: w smooth} that $D\varpi$ is bounded and that $\sigma$ is bounded by assumption, and define
\begin{align}\label{def : varpi}
    C_{\varpi}:=\sup_{(t,x,u) \in [0,T]\times \Rd\times U}| D\varpi(t,x)\sigma(x,u)|<\infty.
\end{align}

For any  $N \geq C_{\varpi}$, let $\Ac_{N}$ be the collection of elements $\alpha$ in $\Ac$ such that $|\alpha|\le N$  $dt\x d\P$-a.s. 
Following \eqref{prob_ori_sans borne}, we define
\begin{equation}\label{prob_ori_borne par N}
	\bar V^{N}(t,x,p) := \sup\left\{ \Ex{F^{\v}_{t,x}}: (\v,\a) \in \bar \Uc^N(t,x,p)\right\}
\end{equation}
where 
$$
\bar \Uc^N(t,x,p) :=  (\Uc\x \Ac_{N})\cap \hat \Uc(t,x,0,p) .
$$

It is natural to expect that $\bar V^N$ converges to $\bar V$ as $N\to \infty$. This is indeed possible thanks to \eqref{def : varpi}. The convergence stated below just requires to slightly bump the level of the constraint.
        % result : convergence of the auxiliary value function V^N to the original value function V
\begin{Proposition}\label{prop: approx N} Let the conditions of Proposition \ref{prop: w smooth}  and Assumption \ref{assum_selection_unique_opti} hold. Then, 
$
\bar \Uc^N(t,x,p)  \ne \emptyset$ 
 for all $(t,x,p)\in  \bar \Dc$ and $N\ge C_{\varpi}$. If moreover ${\rm U}(t,x)$ is a singleton for all $(t,x)\in [0,T)\x \Rd$, then  $\lim_{\eps \downarrow 0}\lim_{N\to \infty} \bar V^{N}(t,x,p+\eps)=\bar V(t,x,p)$ for all $(t,x,p)\in\inte\bar \Dc$. 
\end{Proposition}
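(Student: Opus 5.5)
For the non-emptiness claim, fix $(t,x,p)\in\bar\Dc$ and aim to exhibit an admissible pair with $|\alpha|\le C_\varpi\le N$. Since $\varpi$ is smooth (Proposition \ref{prop: w smooth}), the natural candidate is the boundary-following control. Take $u_0\in\Ur(t,x)$ and let $\hat\ur$ be the continuous selection from Assumption \ref{assum_selection_unique_opti}. Let $\hat X:=\hat X_{t,x}$ be the corresponding strong solution, and set $\hat\nu_s:=\hat\ur(s,\hat X(s))$ and $\hat\alpha_s^\top:=D\varpi(s,\hat X(s))\sigma(\hat X(s),\hat\nu_s)$.

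Then $|\hat\alpha|\le C_\varpi$ by \eqref{def : varpi}. Itô's formula, applied to $\varpi$ on $[t,T-\delta]$ and combined with \eqref{eq: HJB w} (the infimum is attained at $\hat\nu$), gives
\[
\varpi(s,\hat X(s))+\int_t^s g(\hat X(r),\hat\nu_r)dr=\varpi(t,x)+\int_t^s\hat\alpha_r^\top dW_r .
\]
Because $\varpi\in C^{0,1}_b$ and is continuous up to $T$ with $\varpi(T,\cdot)=G$, we can let $\delta\downarrow 0$. This yields $G(\hat X(T))+Y^{\hat\nu}_{t,x,0}(T)=P^{\hat\alpha}_{t,\varpi(t,x)}(T)\le P^{\hat\alpha}_{t,p}(T)$, using $p\ge\varpi(t,x)$. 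Here $P^{\hat\alpha}$ is a true martingale because $\hat\alpha$ is bounded. If $p>\varpi(t,x)$, the same pair still works. Hence $(\hat\nu,\hat\alpha)\in\bar\Uc^N(t,x,p)$.

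For the convergence, note first that $\bar V^N\le\bar V$ and that $\bar V^N$ is nondecreasing in $N$ and in $p$, so $\lim_\eps\lim_N\bar V^N(t,x,p+\eps)\le\lim_\eps\bar V(t,x,p+\eps)$. For this upper bound I would need right-continuity of $\bar V$ in $p$ at interior points. I would get it from concavity of $p\mapsto\bar V(t,x,p)$ on $(\varpi(t,x),\infty)$, obtained by randomizing between two admissible controls via an independent coin built from the Brownian filtration in the weak formulation of Remark \ref{rem : alpha carre inte}; concavity gives continuity in the interior. For the lower bound, fix $\eta>0$ and a control $\nu$ with $\E[{\rm G}^\nu_{t,x,0}]\le p$ and $\E[{\rm F}^\nu_{t,x}]\ge\bar V(t,x,p)-\eta$, and take $\alpha$ from martingale representation with $P^\alpha_{t,p}(T)\ge{\rm G}^\nu$, square integrable by Remark \ref{rem : alpha carre inte}. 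Now build a truncated pair. Let $\tau_N:=\inf\{s\ge t:|\alpha_s|>N\}$, made into a genuine stopping time by using the localizing sequence $\inf\{s:\int_t^s|\alpha|^2\ge N\}$ and then a bounded approximation of $\alpha$ on $[t,\tau_N]$. Run $(\nu,\alpha^N)$ with $|\alpha^N|\le N$ up to $\tau_N$. After $\tau_N$, if $P^{\alpha^N}_{t,p+\eps}(\tau_N)\ge\varpi(\tau_N,X(\tau_N))$, switch to the boundary-following pair from the first part started at $(\tau_N,X(\tau_N))$, with $\alpha$ bounded by $C_\varpi\le N$ and the surplus kept constant. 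This gives an element of $\bar\Uc^N(t,x,p+\eps)$.

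The main obstacle is that admissibility has to hold almost surely on the event where the truncation has broken the constraint, i.e.\ where $P^{\alpha^N}_{t,p+\eps}(\tau_N)<\varpi(\tau_N,X(\tau_N))$. My plan is to use the $\eps$-bump precisely here. Since $\alpha^N\to\alpha$ in $L^2(dt\times d\P)$, the approximating martingale converges uniformly in $L^2$, and $P^\alpha_{t,p}\ge\varpi(\cdot,X^\nu)$ by the geometric DPP of Remark \ref{rem : int de D}. Hence the bad event $B_N$ has probability tending to $0$. The issue is that one cannot continue admissibly on $B_N$. To handle this, I would stop the truncated martingale earlier, at the first time $P^{\alpha^N}_{t,p+\eps}$ hits $\varpi(\cdot,X)$, and from then on follow the boundary control; the absorbing property (Remark \ref{remark_aborbant_boundary}) makes this admissible everywhere. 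The resulting change in expected reward is bounded by $\E[|{\rm F}^\nu-{\rm F}^{\tilde\nu}|\1_{B_N}]$, which tends to $0$ by polynomial growth, Remark \ref{rem : moments X} and uniform integrability. The singleton assumption on ${\rm U}$ enters through Assumption \ref{assum_selection_unique_opti}: it gives a unique, measurable-in-the-starting-point boundary feedback, so the switched control is progressively measurable. Letting $N\to\infty$, then $\eta\to0$ and $\eps\to0$, gives the lower bound.
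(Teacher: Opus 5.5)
Your argument is correct. The non-emptiness part coincides with the paper's step a: your boundary-following pair already satisfies $|\hat\alpha|\le C_\varpi$, so nothing needs truncating. For the convergence, however, you take a genuinely different route.

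\textbf{The paper's route.} It clips $\alpha^N:=-N\vee\alpha\wedge N$, bumps $p$ by $\eps_0$, and switches to $(\hat\ur,\hat{\rm a})$ at $\tau^N\wedge\tilde\tau$. Here $\tilde\tau$ is the time at which the \emph{original} pair reaches the boundary. It then argues pathwise. For $N\ge\hat N(\omega,\eps_0)$ one has $\tau^N\ge\tilde\tau$. After $\tilde\tau$, the original control already equals $\hat\ur(\cdot,X^\nu)$ by Remark \ref{remark_aborbant_boundary}; this is exactly where the singleton assumption on $\Ur$ (together with uniqueness for the feedback SDE) enters. Hence $X^{\bar\nu^N}=X^\nu$ for $N$ large, a.s.

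\textbf{Your route.} You switch only when the bumped truncated martingale hits the boundary. Since $P^\alpha_{t,p}\ge\varpi(\cdot,X^\nu)$ on $[t,T]$, the switching event is contained in $B_N:=\{\sup_{[t,T]}|P^{\alpha^N}_{t,p+\eps}-\eps-P^\alpha_{t,p}|\ge\eps/2\}$. Its probability vanishes by Doob's inequality, and Cauchy--Schwarz with moment bounds uniform in the control closes the estimate.

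\textbf{What this buys you.} You never need to know what $\nu$ does after $\tilde\tau$, so the singleton hypothesis is in fact not used in your argument. Contrary to what you say, it plays no role in measurability either: the single continuous selection $\hat\ur$ of Assumption \ref{assum_selection_unique_opti} already makes the switched control predictable. The paper's pathwise argument, by contrast, cannot do without it.

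\textbf{Cleanups.} The stopping time $\tau_N$ and the localization are unnecessary. Plain clipping already gives $|\alpha^N|\le N$ and $L^2$ convergence. Taken literally, $\inf\{s:|\alpha_s|>N\}$ would even be harmful: for a merely square-integrable $\alpha$ (e.g.\ $\alpha_s=(T-s)^{-1/4}$), $\P(\sup_s|\alpha_s|>N)$ need not vanish. Also, define $B_N$ as the sup-event above, not as the event that the constraint is broken at $\tau_N$.

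\textbf{Continuity in $p$.} For right-continuity of $p\mapsto\bar V(t,x,p)$, the paper uses Lemma \ref{lem: conti V en p}. That lemma obtains concavity from the viscosity supersolution property $-D^2_p\varphi\ge0$ implied by Theorem \ref{thm : pde interior domain}. Your randomization, with an $\Fc_t$-measurable coin independent of controls adapted to $W_{\cdot\vee t}-W_t$, is a valid and more elementary alternative. It does, however, rest on the equality of value functions quoted in Remark \ref{rem : alpha carre inte}. It also needs an enlargement of the space or time interval at $t=0$, where $\Fc_0$ is trivial.
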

            
\begin{proof} Since $\bar V^N$ is non-decreasing in $N$, it suffices to prove the convergence along a subsequence.
            % proof : optimal control exists
            
{\bf a.} Fix an arbitrary $(t,x,p) \in \bar \Dc$. By Proposition \ref{prop: w smooth}, Assumption \ref{assum_selection_unique_opti} and It\^o's Lemma, there exists an admissible control $(\v,\a) \in \Uc \x \Ac$ such that $(\cdot,X\txv, \Ptpa) \in \bar \Dc$ on $[t,T]$. For  $N \geq C_{\varpi}$, let us define 
$$
\a^N := -N \vee \a \wedge N \mbox{ and } P^N := p +\eps_0 + \int_t^{\cdot} (\a^N_r)^\top dW_r,
$$ 
for some $\eps_0\ge 0$.
By construction, $\a^N \in \Ac^N$. Using Remark \ref{rem : alpha carre inte}, one can assume that $\alpha$ is square integrable so that  $\lim_{N \to \infty} \E[\int_t^T |\a^N_s -\a_s|^2ds]=0$, by dominated convergence, and therefore 
\begin{align}\label{M_converge}
\sup_{[t,T]} |P^N{-\eps_0}-P^\alpha_{t,p}|(\omega)\to 0 \;\mbox{ for all } \omega \notin {\cal N}\mbox{ as $N\to \infty$}
\end{align}
where ${\cal N}$ is a null set and the convergence above is taken along a subsequence if necessary. 

By Assumption \ref{assum_selection_unique_opti}, there is a  continuous map $\hat {\rm u}$ such that $\Lc^{\hat{\rm u}(\cdot)}_X \varpi (\cdot) = 0$ on ${[0,T)\x \Rd}$. By Proposition \ref{prop: w smooth}, the map 
$$
\hat {\rm a} : (t,x) \mapsto   ({D\varpi(t,x)\sigma(x,\hat{\rm u}(t,x))})^\top
$$  
is well-defined on $ {[0,T)\x \Rd}$, and, by definition of $C_{\varpi}$, $|\hat {\rm a}| \leq C_{\varpi} \leq N $ for $N$ large enough. Consider the following stopping times
\begin{align*}
    \tau^N &:= \inf\{t' \geq t : P^N(t') = \varpi(t',X\txv(t'))\}\wedge T  \\
    \tilde \tau &:=  \inf\{t' \geq t : \Ptpa(t') = \varpi(t',X\txv(t'))\} \wedge T\\
    \hat \tau^N&:=\tau^N\wedge \tilde \tau 
\end{align*}
and the control process $(\bar \v^N, \bar \a^N)$ defined by 
\begin{align*}
    \bar \v^N := \v \mathbf{1}_{[t, \hat \tau^N  ]} + \hat{\rm u}(\cdot,X^{\bar \v^N}_{t,x}) \mathbf{1}_{(\hat \tau^N  ,T)} \hspace{20pt} 
    \bar \a^N := \a^N \mathbf{1}_{[t,\hat \tau^N  ]} + \hat{\rm a}(\cdot,X^{\bar \v^N}_{t,x}) \mathbf{1}_{(\hat \tau^N ,T)}. 
\end{align*}
Assumption \ref{assum_selection_unique_opti} ensures that it is well-defined. 
 Note that $(\bar \v^N, \bar \a^N) \in \Uc \x \Ac^N$ and that, by definition of $(\hat{\rm u},\hat{\rm a})$, Proposition \ref{prop: w smooth}  and It\^o's Lemma,   we have 
\begin{equation} \label{gdp_bounded_process_check}
    P^{\bar \a^N}_{t,p+\eps_0}(\theta) - \varpi(\theta, X^{\bar \v^N}_{t,x}(\theta)) = P^N(\theta {\wedge \hat \tau^N}) - \varpi(\theta {\wedge \hat \tau^N}, X\txv(\theta {\wedge \hat \tau^N})) \geq 0 
\end{equation}
for any $[t,T]$-valued stopping time $\theta$, 
  which implies that $(\cdot,X^{\bar \v^N}_{t,x},P^{\bar \a^N}_{t,p+\eps_0}) \in \bar\Dc$ on $[t,T]$ a.s. Thus, the control $(\bar \v^N, \bar \a^N)\in \bar \Uc^N(t,x,p)$ for $\eps_0\ge 0$. \\

            % proof : convergence of value function V^N towards V
{\bf b.} We now prove the convergence result. We now fix $\eps_0>0$ and $\omega \in \Omega \backslash {\cal N}$ where ${\cal N}$ is the null set in \eqref{M_converge}. By (\ref{M_converge}),   there exists some $\hat{N}(\omega,\eps_0)$ such that for any $N \geq \hat{N}(\omega,\eps_0)$ 
$$
\sup_{[t, \tilde \tau(\omega)]} (\Ptpa-P^N)(\omega) \leq \frac{\eps_0}{2}{-\eps_0} \le 0.
$$
Since 
$
\brak{P^{\a}_{t,p} - \varpi(\cdot, X\txv)}(\omega) \geq 0,
$ on $[t, \tilde \tau(\omega) ]$, 
this implies that  $\tau^N(\omega) \geq \tilde \tau(\omega) $ for $N \geq \hat{N}(\omega,\eps_0)$, for $\P$-almost every $\omega\in \Omega$. Moreover, by Remark \ref{remark_aborbant_boundary} and the fact that ${\rm U}(t',x')$ is a singleton for all $(t',x')\in [0,T)\x \Rd$,  we have  $\v= \hat{\rm u}(\cdot, X\txv) $ $dt\times d\P$-a.e.~from $\tilde \tau(\omega)$ on. Hence,  for $N\ge \hat{N}(\omega,\eps_0)$, 
$ X^{\bar \v^N}_{t,x} (\omega) = X\txv  (\omega)\mbox{ on } [t,T]
$, for   $\P$-almost every $\omega \in \Omega$, by uniqueness of the solution to \eqref{def hat X dans Assumptio} as assumed in Assumption \ref{assum_selection_unique_opti}.

The above, combined with Remark \ref{rem : moments X} and the fact that $F$ and $f$ are continuous with polynomial growth, shows that 
$$
\liminf_{N\to \infty} \bar V^N(t,x,p+\eps_0)\ge \bar  V(t,x,p).
$$
On the other hand, by \eqref{prob_ori_sans borne} and \eqref{prob_ori_borne par N},
$$
 \bar V(t,x,p+\eps_0)\ge \limsup_{N\to \infty} \bar V^N(t,x,p+\eps_0).
$$
It remains to appeal to the continuity of $p\mapsto V(t,x,0,p)=\bar V(t,x,p)$ stated in Lemma \ref{lem: conti V en p} below to conclude.
\end{proof}

As mentioned above, the advantage of this approximation is that $\bar V^N$ solves a Hamilton-Jacobi-Bellman equation with bounded controls for which comparison holds in the class of semicontinuous functions with polynomial growth.
Define 
\begin{align*}
    &H^N(t,x,q,q_y,A):=\\ 
    &-\sup_{(u,a)\in U\x [-N,N]^d} \crl{ \hat \mu(x,u)^{\top} q + \frac12 {\rm Tr}\left[(\hat\sigma\hat\sigma^{\top})(x,u,a)A\right]+g(x,u)q_y+f(x,u,a)}
\end{align*}
for $(t,x,q,q_y,A)\in [0,T]\x \Rd\x \R^{ {2d}}  \x \R\x \S^{ {2d}}$. 

        % result : convergence of the auxiliary value function on the boundary Vb^N to the original value function on the boundary Vb
This is stated in terms of the upper- and lower-semicontinuous envelopes $\bar V^{N*}$ and $\bar V^N_*$ of $\bar V^N$, and with  $\bar {\cal V}^{N*}:=\bar V^{N*}(\cdot,\varpi)$ and $\bar {\cal V}^{N}_*:=\bar V^{N}_*(\cdot,\varpi)$.
\begin{Theorem}\label{thm : pde interior domain VN} Let the conditions of Proposition \ref{prop: w smooth}  and Assumption \ref{assum_selection_unique_opti} hold. Then, $\bar V^N_{*}$  is a viscosity supersolution  of 
\begin{align*} 
-\partial_{t}\vp + H^{N}(\cdot,D_{(x,p)}\vp,-D_p \vp,D^{2}_{(x,p)}\vp)&\ge 0 \mbox{ on $\inte_{P}\bar \Dc$},\\
\vp(T,\cdot)&\ge F \mbox{ on $\{(x,p)\in \R^{d+1}: p>G(x)\}$},
\end{align*}
and 
$\bar V^{N*}$ is a viscosity  subsolution of
\begin{align*} 
-\partial_{t}\vp + H^N(\cdot,D_{(x,p)}\vp,-D_p \vp,D^{2}_{(x,p)}\vp)&\le 0 \mbox{ on $\inte_{P}\bar \Dc\cup \partial_{<T} \bar \Dc$}\\
\vp(T,\cdot)&\le F \mbox{ on $\{(x,p)\in \R^{d+1}: p\ge G(x)\}$}.
\end{align*}
Moreover, $\bar \Vc^{N}_*$ and $\bar \Vc^{N*}$ are respectively viscosity super- and subsolutions of 
\eqref{eq: pde Vc}-\eqref{eq: cond bord Vc}.
\end{Theorem}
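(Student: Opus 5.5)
The interior statements would be obtained by redoing, for the bounded-control problem, the weak dynamic programming argument of \cite[Theorem 4.2]{bouchard_weak_2012} that produced Theorem \ref{thm : pde interior domain}. The controls $(\nu,\alpha)$ now range over $\Uc\x\Ac_N$, whose values lie in the compact set $U\x[-N,N]^d$. The operator $H^N$ is therefore continuous, so $H^{N*}=H^N_*=H^N$; this explains why no envelopes appear in the statement. Three ingredients of \cite{bouchard_weak_2012} need to be checked again with $\Ac$ replaced by $\Ac_N$.
\begin{enumerate}
\item The weak dynamic programming principle for the state-constrained problem. Its proof only uses stability of the control set under concatenation at stopping times and a measurable selection/covering argument, and $\Ac_N$ is stable under concatenation.
\item The geometric dynamic programming of \cite{soner_dynamic_2002} for the target constraint. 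This is not needed in its full form here, because the domain $\bar\Dc$ is the same for every $N\ge C_\varpi$: by Proposition \ref{prop: approx N}, $\bar\Uc^N(t,x,p)\neq\emptyset$ for all $(t,x,p)\in\bar\Dc$, and the construction in its proof keeps $(\cdot,X,P)$ in $\bar\Dc$ using controls bounded by $N$. In particular, the boundary part $\partial_{<T}\bar\Dc$ is viable with $N$-bounded controls.
\item The terminal conditions. These follow as in \cite{bouchard_weak_2012}. For the supersolution property at $p>G(x)$, one uses constant controls $\alpha=0$ over a short horizon. For the subsolution property, one combines $F$'s continuity with $\bar V^N\le\bar V$.
\end{enumerate}

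The supersolution property on $\inte_P\bar\Dc$ is the standard test-function argument. Fix a constant $(u,a)\in U\x[-N,N]^d$ and run it until the exit time from a small ball inside $\inte_P\bar\Dc$. Then switch to an $\eps$-optimal admissible control, which exists by part a of the proof of Proposition \ref{prop: approx N}. Applying It\^o's formula to the test function and sending the ball to zero gives $-\partial_t\vp+H^N\ge0$. The subsolution property on $\inte_P\bar\Dc\cup\partial_{<T}\bar\Dc$ goes by contradiction, exactly as in \cite{bouchard_weak_2012}. Suppose $-\partial_t\vp+H^N>0$ near the point. Then every admissible control makes $\vp(\cdot,X,P)$ a strict supermartingale up to the exit time of a neighborhood, and on the boundary of that neighborhood $\bar V^{N*}\le\vp-\eta$, which contradicts the dynamic programming principle. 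The points of $\partial_{<T}\bar\Dc$ cause no difficulty, since the neighborhood is intersected with $\bar\Dc$ and admissibility keeps the state inside.

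For the boundary statement I will follow the proof of Theorem \ref{thm : dirichlet condition} in Section \ref{sect_proof_PDE_Vc}, transposed to $\bar V^N$. The key point is Remark \ref{remark_aborbant_boundary}: once $P=\varpi(\cdot,X)$, admissibility forces $\nu\in\Ur(\cdot,X)$ and $\alpha^\top=D\varpi\,\sigma(X,\nu)$. Such an $\alpha$ has norm at most $C_\varpi\le N$, so it lies in $\Ac_N$. Consequently, on the boundary the restricted control set is identical for $\bar V$ and $\bar V^N$, and the arguments giving \eqref{eq: pde Vc}–\eqref{eq: cond bord Vc} carry over verbatim.
\begin{itemize}
\item For the supersolution part, I use the feedback $\hat{\rm u}$ from Assumption \ref{assum_selection_unique_opti} together with $\hat{\rm a}=(D\varpi\,\sigma)^\top(\cdot,\hat{\rm u})$, which is bounded by $N$.
\item For the subsolution part, I project the interior subsolution property of $\bar V^{N*}$ onto the boundary via the envelope representation of Remark \ref{rem : Vc decroi en p}. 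This uses the smoothness of $\varpi$ from Proposition \ref{prop: w smooth} in order to test with $\vp(t,x)+\varpi$-dependent functions.
\end{itemize}

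The main obstacle I expect is this subsolution part for $\bar\Vc^{N*}$. One has to show that the maximization in $H^N$ degenerates onto $\Ur(t,x)\x\{\sigma^\top D\varpi\}$ when tested at $p=\varpi$. I would first try to reproduce the penalization used in Section \ref{sect_proof_PDE_Vc}: add a term $-\frac1\eps(p-\varpi(t,x))^2$-type or $\frac{1}{\eps}\bigl(p-\varpi\bigr)$ to the test function and exploit the fact that the $p$-direction diffusion coefficient is $a$. This forces $a$ to approach $\sigma^\top D\varpi$ and $u$ to approach $\Ur$ in the limit. Boundedness $|a|\le N$ only helps compactness there.
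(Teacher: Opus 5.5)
Your plan is essentially the paper's proof. There, step a uses the construction from the proof of Proposition \ref{prop: approx N} to get $\bar\Dc=\{p\ge\varpi\}$ and to identify $\bar\Uc^N(t,x,p)$ with the controls satisfying the pathwise constraint $P^{\a}_{t,p}\ge\varpi(\cdot,X^{\v}_{t,x})$, so that $\bar V^N$ becomes a standard state-constrained problem treated as in \cite[Theorem 4.2]{bouchard_weak_2012}; step b then reruns the proof of Theorem \ref{thm : dirichlet condition}, which is possible because $|D\varpi\,\sigma|\le C_\varpi\le N$.

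One correction on the step you flag as the obstacle. With the paper's affine penalization $\gamma_n^{-1/2}(p-\varpi)$, $\gamma_n:=p_n-\varpi(t_n,x_n)$, the test function has no second-order $p$-derivatives, pure or mixed, so $a$ drops out of $H^N$ altogether. The control $u$ is driven into $\Ur(t_0,x_0)$ by the first-order term $-\gamma_n^{-1/2}(\Lc^u_X\varpi+g)\le 0$, not by the $p$-diffusion. A quadratic penalization fails with either sign: $+\frac1\eps(p-\varpi)^2$ puts $\frac1\eps|a-(D\varpi\,\sigma)^\top|^2\ge 0$ inside the supremum and makes the subsolution inequality void, while $-\frac1\eps(p-\varpi)^2$ pushes the maximum points away from the boundary.
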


\begin{proof} {\bf a.}~It follows from step a.~in the proof of Proposition \ref{prop: approx N} that  $(t,x,p) \in \bar \Dc$ implies that one can find $(\nu,\alpha)\in \bar \Uc^N(t,x,p)$ such that $(\cdot,X^{ \v}_{t,x},P^{\a}_{t,p}) \in \bar\Dc$ on $[t,T]$. On the other hand, if $(\nu,\alpha)$ is such that $(\cdot,X^{ \v}_{t,x},P^{\a}_{t,p}) \in \bar\Dc$ on $[t,T]$  then  $(t,x,p) \in \bar \Dc$. Moreover, $\bar \Dc=\{(t,x,p)\in [0,T]\times \Rd\times \R: p\ge \varpi(t,x)\}$. Indeed, the same arguments  as in step a.~in the proof of Proposition \ref{prop: approx N} show that existence holds for the optimal control problem associated with $\varpi$ under the conditions of Proposition \ref{prop: w smooth}  and Assumption \ref{assum_selection_unique_opti}. Then, this follows from the same martingale representation argument as presented in  \cite[Proposition 3.1]{bouchard_stochastic_2010}. Hence, $(\nu,\alpha)\in \bar \Uc^N(t,x,p)$ if and only if $P^{\a}_{t,p}\ge \varpi(\cdot,X^{ \v}_{t,x})$ on $[t,T]$. Then, $V^N$ is reduced to a standard optimal control problem under state constraint and the derivation of the super- and subsolution properties are standard. See e.g.~the proof of \cite[Theorem 4.2]{bouchard_weak_2012}.\\

{\bf b.} For the super- and subsolution properties of $\bar \Vc^{N}_*$ and $\bar {\cal V}^{N*}$, it suffices to follows exactly the proof of Theorem \ref{thm : dirichlet condition} below. This is possible for $N\ge C_\varpi$ as defined in \eqref{def : varpi}. 
\end{proof}

We conclude this section with  the technical Lemma that was used in the proof of Proposition \ref{prop: approx N}.

        % lemma : value function V continuous in terms of constraint limit p
\begin{Lemma}\label{lem: conti V en p}   The map $p\in (w(t_{0},z_0),\infty)\mapsto V(t_{0},z_0,p)$ is continuous for all $(t_0,z_0)\in [0,T)\x \Rdd$.
\end{Lemma}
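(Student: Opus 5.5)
The plan is to exploit concavity of $p\mapsto V(t_0,z_0,p)$, obtained by randomizing the constraint level through the martingale formulation \eqref{prob_ori_sans borne}. A concave, finite function on an open interval is automatically continuous, so the whole argument reduces to establishing concavity and finiteness on $(w(t_0,z_0),\infty)$.

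\textbf{Concavity.} Fix $p_1,p_2>w(t_0,z_0)$, $\lambda\in(0,1)$ and $p_\lambda:=\lambda p_1+(1-\lambda)p_2$. For $i=1,2$ I pick $\eps$-optimal controls $(\nu^i,\alpha^i)\in\hat\Uc(t_0,z_0,p_i)$. The goal is a control at level $p_\lambda$ that plays $(\nu^1,\alpha^1)$ with probability $\lambda$ and $(\nu^2,\alpha^2)$ with probability $1-\lambda$.
\begin{enumerate}
\item Since the filtration is Brownian, there is no independent coin. I would therefore use a short time interval $[t_0,t_0+\delta]$ and the event $A:=\{W^1_{t_0+\delta}-W^1_{t_0}\le c_\lambda\sqrt\delta\}$, where $c_\lambda$ is chosen so that $\P[A]=\lambda$.
\item On $[t_0,t_0+\delta]$ I play a fixed control $u\in U$.
\item After $t_0+\delta$, I use $\nu^1$ on $A$ and $\nu^2$ on $A^c$, each shifted in time and driven by the increments of $W$ after $t_0+\delta$. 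This is legitimate by Remark \ref{rem : alpha carre inte}: controls may be taken adapted to the increments of $W$, so the randomization is admissible.
\item By the martingale representation theorem, the constraint $\E[{\rm G}]\le p_\lambda+o(1)$ can be met by some $\alpha\in\Ac$.
\end{enumerate}

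\textbf{Error control.} Remark \ref{rem : moments X}, together with the polynomial growth and continuity of $F,f,G,g$, shows that the shifted starting point and the lost time $\delta$ change the expectations $\E[{\rm F}]$ and $\E[{\rm G}]$ by at most $\rho(\delta)\to0$. This uses a continuity estimate for the value functions in $(t,x)$, with the level $p$ bumped slightly. The outcome is
$$V(t_0,z_0,p_\lambda+\rho(\delta))\ge \lambda V(t_0,z_0,p_1)+(1-\lambda)V(t_0,z_0,p_2)-\rho(\delta)-2\eps.$$
Removing the bump $\rho(\delta)$ requires continuity, which is exactly what we are trying to prove. To avoid circularity I would instead shift the levels down before randomizing: randomize $p_1-\rho$ and $p_2-\rho$, which still lie above $w$ for $\delta$ small. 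This yields midpoint concavity only up to errors involving $V(p_i-\rho)$, and these are handled by monotonicity.

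\textbf{Alternative route to concavity.} A cleaner option is to prove concavity directly through the randomized relaxation: $V$ equals the value over the relaxed, randomized control set, which is convex in $p$. The randomization equivalence in Remark \ref{rem : alpha carre inte}, citing \cite[Remark 5.2]{bouchard2011weak}, supplies this. I would state it as: with an independent uniform variable available, $V$ is unchanged, and mixing two admissible controls gives concavity exactly.

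\textbf{Continuity.} Finiteness holds because $V\le\E[\sup|F|+\ldots]<\infty$ by polynomial growth and Remark \ref{rem : moments X}. Moreover $V>-\infty$ on $(w,\infty)$, since some $\nu$ satisfies the strict constraint. A finite concave, nondecreasing function on an open interval is continuous, which proves the claim.

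\textbf{Main obstacle.} I expect the hardest step to be justifying the randomization rigorously in the Brownian filtration, specifically making the $O(\delta)$ time-shift error vanish without already knowing continuity in $p$. I would try the randomization-invariance argument first, since it avoids the time shift altogether.
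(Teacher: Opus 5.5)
Your preferred ``alternative route'' is circular. Suppose an independent uniform variable is adjoined. Mixing $\eps$-optimal controls shows that the enlarged value at level $p$ dominates $\lambda V(t_0,z_0,p_1)+(1-\lambda)V(t_0,z_0,p_2)$ whenever $\lambda p_1+(1-\lambda)p_2\le p$, i.e.\ it dominates the concave envelope of $V(t_0,z_0,\cdot)$. So ``$V$ is unchanged'' already contains the concavity you want to prove, and cannot be used as an input. Remark \ref{rem : alpha carre inte} does not supply it. That remark compares controls adapted to $W_{\cdot\vee t}-W_t$ with general predictable ones, and at $t_0=0$ these classes coincide because $\Fc_0$ is trivial, so no randomization is available there at all. (For $t_0>0$ no remark is needed: an event of $\Fc_{t_0}$ independent of $(W_s-W_{t_0})_{s\ge t_0}$ gives exact mixing, once the $\eps$-optimal controls are recoupled to be independent of it.) So at least at $t_0=0$ you must use the time-shift construction.

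That construction can be completed, but not as written. First, $\rho(\delta)$ must be a bound, uniform over $\Uc$, on the change of $\E[{\rm F}]$ and $\E[{\rm G}]$ when the \emph{fixed} controls $\nu^1,\nu^2$ are shifted by $\delta$ and restarted from $X(t_0+\delta)$. This follows from Remark \ref{rem : moments X}, boundedness of $\mu,\sigma$, and localization using continuity and polynomial growth. A continuity estimate for value functions in $(t,x)$ is neither available in the paper nor needed. Second, and more importantly, shifting the levels down and letting $\delta\to0$ only gives $V(t_0,z_0,p_\lambda)\ge\lambda V(t_0,z_0,p_1-)+(1-\lambda)V(t_0,z_0,p_2-)$ (left limits). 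That is not concavity of $V$, so your final ``concave $\Rightarrow$ continuous'' step cannot be applied to $V$ itself. The missing step is as follows. Set $g(p):=V(t_0,z_0,p-)$. It is concave (apply the inequality at $p_i+s$ and let $s\uparrow0$), finite, hence continuous. Monotonicity then gives $g(p)\le V(t_0,z_0,p)\le\inf_{q>p}g(q)=g(p)$.

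Once this is fixed, your proof has the same architecture as the paper's, with $g$ playing the role of $V_*$. The paper gets $V_*(t_0,z_0,p_0)\ge V(t_0,z_0,p_0-\iota)$ by perturbing the initial data for a fixed control with slack $\iota$. It obtains concavity of $V_*(t_0,z_0,\cdot)$ from the supersolution property $-D^2_p\varphi\ge0$ implied by Theorem \ref{thm : pde interior domain} (through the unbounded $a$), via \cite[Proposition 5.2]{cvitanic1999closed}. It then squeezes $V$ onto $V_*$ by monotonicity. Your route replaces this viscosity input with an elementary probabilistic randomization, at the price of the shift construction at $t_0=0$, and yields concavity of $V(t_0,z_0,\cdot)$ as a by-product.
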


\begin{proof} Let $(t_{n},x_{n},y_n,p_{n})_{n\ge 1}$ be a sequence converging to $(t_{0},x_{0},y_0,p_{0})\in \inte \Dc$ such that 
$V(t_{n},z_n,p_{n})\to V_{*}(t_{0},z_0,p_{0})$ as $n\to \infty$, in which $z_{n}:=(x_{n},y_{n})$ and $z_{0}=(x_{0},y_{0})$. Let $\nu \in \Uc$ be such that $\E[{\rm G}^{\nu}_{t_{0},z_0}]\le p_{0}-\iota$ for some $\iota>0$. By Remark \ref{rem : moments X}, the continuity and polynomial growth of $G,g,F$ and $f$, we have  $\E[{\rm G}^{\nu}_{t_{n},z_n}]< p_{n} $ for $n$ large enough and $\lim_{n\to \infty}\E[{\rm F}^{\nu}_{t_{n},x_{n}}]=\E[{\rm F}^{\nu}_{t_{0},x_{0}}]$ up to passing to a subsequence. Therefore, 
$$
\liminf_{n\to \infty} V(t_{n},z_n,p_{n})\ge \E[{\rm F}^{\nu}_{t_{0},x_{0}}].
$$
By arbitrariness of $\nu$, this implies that 
$$
V_{*}(t_{0},z_0,p_{0})=\liminf_{n\to \infty} V(t_{n},z_n,p_{n})\ge V(t_{0},z_0,p_{0}-\iota)\ge V_{*}(t_{0},z_0,p_{0}-\iota).
$$
We now use Theorem \ref{thm : pde interior domain} to deduce that $V_{*}$ is a viscosity supersolution of $-D^{2}_{p} \vp\ge 0$ on  $\inte \Dc$, which, by the same arguments as in \cite[Proposition 5.2]{cvitanic1999closed}, implies that   $p\in (w(t_{0},z_0),\infty)\mapsto V_{*}(t_{0},z_0,p)$ is concave and therefore continuous.  { Using the above, we deduce that  
$$
V_{*}(t_{0},z_0,p_{0})=\lim_{\iota \downarrow 0}  V(t_{0},z_0,p_{0}-\iota).
$$
On the other hand, by the above and since $V$ is non-decreasing in its last argument, for all $\eps_o>0$,
$$
V_{*}(t_{0},z_0,p_{0})\le \lim_{\iota\downarrow 0} V(t_{0},z_0,p_{0}+\iota)\le \lim_{\iota \downarrow 0}  V(t_{0},z_0,p_{0}+\eps_o-\iota) =V_{*}(t_{0},z_0,p_{0}+\eps_o).
$$
Using the continuity of $p\in (w(t_{0},z_0),\infty)\mapsto V_{*}(t_{0},z_0,p)$ and again the fact $V$ is non-decreasing in its last argument, 
 we then deduce that } $V_{*}(t_{0},z_0,p_{0})= V(t_{0},z_0,p_{0})$. Hence $V(t,z,\cdot)=V_{*}(t,z,\cdot)$ is continuous on $(w(t,z),\infty)$, for all $(t,z)\in [0,T)\x \R^{d+1}$.
\end{proof}

% Approx. by smooth functions
\subsection{Approximation by smooth and uniformly elliptic coefficients} \label{sect_approximation cas non regulier}

In this section, we explain how the original problem \eqref{prob_ori_sans borne} can be approximated when it does not satisfy our Assumptions \ref{assum_g_holder}, \ref{assum_sigma_unif_elliptic} and \ref{assum_selection_unique_opti}, and in the case where $\Ur(t,x)$ is not reduced to a singleton. \\

    % Discussion on the assumptions
As for Assumption \ref{assum_g_holder}, it can be easily handled by considering a sequence of smooth approximations $(G_\eps)_{\eps>0}$ of $G$, while the uniform ellipticity condition of Assumption \ref{assum_sigma_unif_elliptic} is satisfied by adding a small regularizing term $\eps I_d$ to $\sigma$, in which $I_d$ denotes the $d\x d$-identity matrix. On the other hand, the Assumption \ref{assum_selection_unique_opti} might be more difficult to satisfy. For this let us consider an example. Assume that 
\begin{equation}\label{eq: u mapsto to example}
u\in U\mapsto \Lc_X^u \varpi(t,x)+g(x,u)
\end{equation}
is convex. Then, 
\begin{equation}\label{eq: u mapsto to example eps}
u\in U\mapsto \Lc_X^u \varpi_{\eps}(t,x)+g(x,u)+\eps |u|^2
\end{equation}
is strictly convex and therefore $\Ur_\eps(t,x)$ is reduced to a singleton, in which $\Ur_\eps$ and $\varpi_{\eps}$ are defined as $\Ur$ and $\varpi$ but with the map $g_{\eps}(x,u):=g(x,u)+\eps |u|^2$ in place of $g(x,u)$. Assume now that \eqref{eq: u mapsto to example} is linear, for all $(t,x)\in [0,T)\x \R^d$, and that $U=\Pi_{i=1}^d [\underline u^i,\overline u^i]$, then 
\begin{align*}
\hat \ur_\eps(t,x)&:={\rm argmin }\{\Lc_X^u \varpi_{\e}(t,x)+g(x,u)+\eps |u|^2: u\in U\}\\
&=\left(\underline u^i\vee \left(-\frac{1}{2\eps}\frac{\partial}{\partial u^i}\left( \Lc_X^u \varpi_{\e}(t,x)+g(x,u)\right)_{|u=u_0}\right)\wedge \overline u^i \right)_{1\le i\le d}
\end{align*}
in which the right-hand side does not depend on $u_0$, an arbitrary point in $U$. In this case, $\hat \ur_\eps$ is uniquely defined and H\"older continuous,  even  locally Lipschitz if $\sigma$ does not depend of $u$, see Proposition \ref{prop: w smooth}. Since $\mu$ and $\sigma$ are bounded, it turns out that it is enough to have $\hat \ur_\eps$ locally Lipschitz to construct a solution to the SDE 
\begin{equation*}
	\hat X_{t,x} (s) = x + \int_{t}^{s} \mu(\hat X_{t,x}(r),\hat \ur_\eps(r,\hat X_{t,x}(r))  dr + \int_{t}^{s} \sigma(\hat X_{t,x}(r),\hat \ur_\eps(r,\hat X_{t,x}(r)) dW_r
\end{equation*}
so that Assumption \ref{assum_selection_unique_opti} is satisfied. We refer to e.g.~\cite{krylov2005strong,veretennikov1981strong,zhang2010stochastic} and the references therein  for various results on the existence of solution of SDEs for non-Lipchitz coefficients.  

By Remark \ref{rem : unicite edp cal V}, having $\hat \ur_\eps$ locally Lipschitz on $[0,T)\x \Rd$ is also enough to ensure that comparison holds for  \eqref{eq: pde Vc}-\eqref{eq: cond bord Vc} within the class of functions with polynomial growth.
\vspace{2mm}
    % Result : convergence of approx. problem V_e to original problem V
    
Let us now formalize this. Let $(F_{\eps},G_{\eps},f_{\eps},g_{\eps},\mu_{\eps},\sigma_{\eps})_{\eps>0}$ be a family of maps on $[0,T]\x \R^{d}\x U$ with values in $\R^{4}\x \R^{d}\x \M^{d}$ satisfying the following assumption. 
        % assumption 
\begin{Assumption}\label{assum: suite regularisante} There exists $C,\ell>0$ and a family $(\rho_{N})_{N\ge 1}$ of real-valued maps on $\R^{d}$ such that 
\begin{itemize}
\item $(\mu_{\eps}(\cdot,u),\sigma_{\eps}(\cdot,u))_{\eps>0}$ is equi-lipschitz, uniformly in $u\in U$, and $(\mu_{\eps},\sigma_{\eps})_{\eps>0}$ is bounded.
\item $|(F_{\eps},G_{\eps},f_{\eps},g_{\eps})(x,u)|\le C(1+|x|^{\ell})$, for all $(x,u)\in \R^{d}\x U$ and $\eps>0$.
\item $\rho_{N}(0)=0$, $\rho_{N}$ is {locally Lipschitz-continuous}, and $|\rho_{N}(x)|\le C(1+|x|^{\ell_{N}})$ for all $x\in \R^{d}$, for some $\ell_{N}\ge 1$, for all $N\ge 1$.
\item $|F_{\eps}(x)-F_{\eps}(x')|+|G_{\eps}(x)-G_{\eps}(x')|+|f_{\eps}(x,u)-f_{\eps}(x',u)|+|g_{\eps}(x,u)-g_{\eps}(x',u)|\le \rho_{N}(x-x')$ for all $x,x'\in B_{N}$, $u\in U$, for all $N\ge 1$ and $\eps>0$.
\item $(F_{\eps},G_{\eps},f_{\eps}(\cdot,u),g_{\eps}(\cdot,u),\mu_{\eps}(\cdot,u),\sigma_{\eps}(\cdot,u))_{\eps>0}$ converges uniformly to $(F,G,f,g,\mu(\cdot,u),$ $\sigma(\cdot,u))$ on compact sets as $\eps\to 0$, uniformly in $u\in U$.
\end{itemize}
\end{Assumption}

For $\eps>0$, let $V_{\eps}$ be defined as $V$ but with respect to $(F_{\eps},G_{\eps},f_{\eps},g_{\eps},\mu_{\eps},\sigma_{\eps})$ in place of $(F,G,f,g,\mu,\sigma)$. 

        % statement of result
\begin{Proposition} Let Assumption \ref{assum: suite regularisante} hold. Then, $V_{\eps}\to V$ pointwise on $\inte \Dc$ as $\eps \to 0$.
\end{Proposition}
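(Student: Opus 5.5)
We argue by a stability estimate at the level of controlled processes, showing separately $\liminf_{\eps\to0}V_\eps\ge V$ and $\limsup_{\eps\to0}V_\eps\le V$ at a fixed $(t,z,p)=(t,x,y,p)\in\inte\Dc$, i.e.\ $p>w(t,x,y)$. Write $X^{\nu,\eps}_{t,x}$, ${\rm F}^{\nu,\eps}_{t,x}$, ${\rm G}^{\nu,\eps}_{t,x,y}$ for the objects built from $(F_\eps,G_\eps,f_\eps,g_\eps,\mu_\eps,\sigma_\eps)$.

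The first and main step is a uniform-in-control estimate:
\begin{align*}
\lim_{\eps\to 0}\sup_{\nu\in\Uc}\E\Big[|{\rm F}^{\nu,\eps}_{t,x}-{\rm F}^{\nu}_{t,x}|+|{\rm G}^{\nu,\eps}_{t,x,y}-{\rm G}^{\nu}_{t,x,y}|\Big]=0 .
\end{align*}
To prove it, first note that, by the equi-Lipschitz and boundedness conditions, Remark \ref{rem : moments X} holds with constants independent of $\eps$ and $\nu$. In particular $\sup_{\eps,\nu}\E[\sup_{[t,T]}|X^{\nu,\eps}_{t,x}|^q]<\infty$ for all $q\ge1$. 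Next, a standard Gronwall/BDG argument, splitting $\mu_\eps(X^\eps,\nu)-\mu(X,\nu)=[\mu_\eps(X^\eps,\nu)-\mu_\eps(X,\nu)]+[\mu_\eps(X,\nu)-\mu(X,\nu)]$ and similarly for $\sigma$, gives
$$\E\big[\sup_{[t,T]}|X^{\nu,\eps}_{t,x}-X^\nu_{t,x}|^2\big]\le C\,\E\Big[\int_t^T\sup_{u\in U}|(\mu_\eps-\mu,\sigma_\eps-\sigma)(X^\nu_{t,x}(s),u)|^2ds\Big].$$
The right-hand side goes to $0$ uniformly in $\nu$. Indeed, we localize on $\{\sup|X^\nu_{t,x}|\le N\}$, where local uniform convergence applies, and bound the complement by boundedness of the coefficients together with the uniform moment bound.

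Then we control the payoff differences. We split
$$F_\eps(X^{\eps})-F(X)=[F_\eps(X^\eps)-F_\eps(X)]+[F_\eps(X)-F(X)],$$
and likewise for $G,f,g$. On the event $\{\sup(|X^\eps|+|X|)\le N\}$ the first term is bounded by $\rho_N(X^\eps-X)$, and the second by the uniform error on $B_N$. On the complement we use the polynomial growth bound, Cauchy--Schwarz and the uniform moments, which makes the contribution small for $N$ large. For the $\rho_N$ term we need $\E[\rho_N(X^\eps-X)]\to0$ uniformly in $\nu$. This follows from the continuity of $\rho_N$ with $\rho_N(0)=0$, its polynomial growth, and convergence in probability of $X^\eps-X$ uniformly in $\nu$, via uniform integrability. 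I expect this step to be the main technical obstacle: the moduli $\rho_N$ and exponents $\ell_N$ depend on $N$, so the limit must be taken first in $\eps$ and then in $N$.

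With the estimate in hand, the conclusion is quick. Define $\delta_\eps:=\sup_\nu\E[|{\rm G}^{\nu,\eps}-{\rm G}^\nu|]$ and $\gamma_\eps:=\sup_\nu\E[|{\rm F}^{\nu,\eps}-{\rm F}^\nu|]$. Comparing admissible sets gives
$$V(t,x,y,p-\delta_\eps)-\gamma_\eps\le V_\eps(t,x,y,p)\le V(t,x,y,p+\delta_\eps)+\gamma_\eps .$$
For $\eps$ small, $p-\delta_\eps>w(t,x,y)$ because $p>w(t,x,y)$, so both bounds stay inside $(w(t,x,y),\infty)$. By Lemma \ref{lem: conti V en p}, $V$ is continuous in $p$ there, so both bounds converge to $V(t,x,y,p)$ as $\eps\to0$. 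This yields pointwise convergence on $\inte\Dc$. Note that we work directly with the original formulation \eqref{prob_ori_intro}, so no smoothness of $w_\eps$ is required.
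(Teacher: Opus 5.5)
Your proposal is correct and follows essentially the same route as the paper. Both arguments use uniform-in-$\nu$ convergence of $X^{\eps,\nu}$ to $X^{\nu}_{t,x}$, localize on $B_N$ via $\rho_N$ and the local uniform convergence of the data, handle the tails with growth bounds and Markov's inequality, and finally apply Lemma~\ref{lem: conti V en p} to remove the slack in the constraint level. The only difference is presentational. Your sandwich $V(t,x,y,p-\delta_\eps)-\gamma_\eps\le V_\eps(t,x,y,p)\le V(t,x,y,p+\delta_\eps)+\gamma_\eps$, with errors taken as suprema over $\nu\in\Uc$, makes explicit the uniformity in the control that the paper's ``by the same arguments'' for the $\limsup$ direction tacitly relies on; the paper's $\liminf$ step only needs a fixed $\nu$ with slack $\iota$.
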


        % proof 
\begin{proof}   Recall that $(g_{\eps},f_{\eps},\mu_{\eps},\sigma_{\eps})_{\eps>0}$ converges uniformly on compact sets to  $(g,f,\mu,\sigma)$ and satisfies Assumption \ref{assum: suite regularisante}. 
Given $(t_{0},z_0,p_{0})\in \inte \Dc$, with $z_0:=(x_{0},y_0)$, let $X^{\eps}$ be defined as the solution of 
\begin{equation*}
	X^{\eps,\nu}(s) = x_{0} + \int_{t_{0}}^{s} \mu_{\eps}(X^{\eps,\nu}(r),\nu_{r}) dr + \int_{t_{0}}^{s} \sigma_{\eps}(X^{\eps,\nu}(r),\nu_{r})dW_r,\; s\le T.
\end{equation*}
Standard estimates imply that, for all $k\ge 1$, 
\begin{align}\label{eq: coonv unif Xeps}
\sup_{\nu \in \Uc}\E[\sup_{[t_{0},T]}|X^{\eps,\nu}-X^{\nu}_{t_{0},x_{0}}|^{k}]\to 0\mbox{ as } \eps\to 0.
 \end{align}
{\bf a.} Fix now $\nu \in \Uc$  such that $\E[{\rm G}^\nu_{t_{0},x_{0},y_0}]\le p_{0}-\iota$ for some $\iota>0$.   
By Assumption \ref{assum: suite regularisante}, given $N\ge 1$,  we have for some $C,\ell>0$, independent on $N$ and $\eps$,
\begin{align*}
&\E[|G(X^{\nu}_{t_{0},x_{0}}(T))-G_{\eps}(X^{\eps,\nu}(T))|]\\
&\le 
\E[|G(X^{\nu}_{t_{0},x_{0}}(T))-G_{\eps}(X^{\nu}_{t_{0},x_{0}}(T))|]\\
&+\E[|G_{\eps}(X^{\nu}_{t_{0},x_{0}}(T))-G_{\eps}(X^{\eps,\nu}(T))|]\\
&\le \sup_{B_{N}}|G-G_{\eps}|+\E[C(1+|X^{\nu}_{t_{0},x_{0}}(T)|^{\ell})\1_{\{|X^{\nu}_{t_{0},x_{0}}(T)|\ge N\}}]
\\
&+ \E[\rho_{N}(X^{\nu}_{t_{0},x_{0}}(T)-X^{\eps,\nu}(T))\1_{\{(X^{\nu}_{t_{0},x_{0}}(T),X^{\eps,\nu}(T))\in (B_{N})^{2}\}}]\\
& +\E[C(1+|X^{\nu}_{t_{0},x_{0}}(T)-X^{\eps,\nu}(T)|^{\ell_{N}})\1_{\{(X^{\nu}_{t_{0},x_{0}}(T),X^{\eps,\nu}(T))\notin (B_{N})^{2}\}}].
 \end{align*}
Up to changing $C$, \eqref{eq: coonv unif Xeps}, Assumption \ref{assum: suite regularisante} and the Markov inequality then imply that 
\begin{align*}
\limsup_{\eps\to 0}\E[|G(X^{\nu}_{t_{0},x_{0}}(T))-G_{\eps}(X^{\eps,\nu}(T))|]
&\le C\E[(1+|X^{\nu}_{t_{0},x_{0}}(T)|^{\ell})^{2}]^{\frac12}\frac{\E[|X^{\nu}_{t_{0},x_{0}}(T)|]^{\frac12}}{N^{\frac12}}
.
\end{align*}
Sending $N\to \infty$ thus implies that $\lim_{\eps\to 0}\E[|G(X^{\nu}_{t_{0},x_{0}}(T))-G_{\eps}(X^{\eps,\nu}(T))|]=0$. One can show similarly that 
\begin{align*}
0&=\lim_{\eps\to 0}\E[\int_{t_{0}}^{T}(|f(X^{\nu}_{t_{0},x_{0}}(s))-f_{\eps}(X^{\eps,\nu}(s))|+|g(X^{\nu}_{t_{0},x_{0}}(s))-g_{\eps}(X^{\eps,\nu}(s))|)ds]\\
	&+ \E[|F(X^{\nu}_{t_{0},x_{0}}(T))-F_{\eps}(X^{\eps,\nu}(T))|].
 \end{align*}
It thus follows that 
$$
\liminf_{\eps\to 0}V_{\eps}(t_{0},z_0,p_{0})\ge V(t_{0},z_0,p_{0}-\iota)
$$
so that, by continuity of $V(t_{0},z_0,\cdot)$ on $(w(t_{0},z_0),\infty)$, see Lemma \ref{lem: conti V en p}, and arbitrariness of $\iota>0$,  
$$
\liminf_{\eps\to 0}V_{\eps}(t_{0},z_0,p_{0})\ge V(t_{0},z_0,p_{0}). 
$$
{\bf b.} Conversely, by the same arguments as above, for $\iota>0$, 
$$
V(t_{0},z_0,p_{0}+\iota)\ge \limsup_{\eps\to 0}V_{\eps}(t_{0},z_0,p_{0})
$$
and therefore, 
$$
V(t_{0},z_0,p_{0})\ge \limsup_{\eps\to 0}V_{\eps}(t_{0},z_0,p_{0})
$$
by Lemma \ref{lem: conti V en p}. This concludes the proof.
\end{proof}

% ---------------------------------
%%% Numerical resolution - Deep Learning %%%
%----------------------------------

\section{Numerical resolution : a Deep Learning implementation} \label{sect_algo}

% Intro to the section - Organisation + precision on what is being trained
In this section, we introduce a 3-step deep-learning-based algorithm which sequentially estimates : (1) the optimal control on the domain boundary and the domain boundary $\varpi$ itself, (2) the value function $\bar \Vc$ on the domain boundary, and (3) the value function $\bar V$ on the entirety of the domain along with the optimal control for the state process and the Martingale representation. Without loss of generality, given Remark \ref{rem : V without y}, we can fix $y=0$. The PDE characterizations obtained in the previous sections are used to train and validate the quality of our estimation.\\

% Discussion on the discretization
We first partition the time horizon into a $K$-period  discrete time grid $\Pi_K:=\{t_k = \frac{kT}{K},\; k=0,...,K\}$ and use an Euler scheme to approximate the  state process $X^\v$, meaning that
\begin{align} 
    X^\nu_{t_0} &= X_{t_0} \nonumber \\
    X^\nu\tkk &= X^\v\tk + \mu(X^\nu\tk , \v\tk) \Delta t + \sigma(X^\v\tk , \v\tk)\Delta W\tkk \label{dyn_X_discrete}
\end{align} 
where $X_{t_0}$ is an initial state valued in $\R^d$ that will be drawn from a given distribution, $\v$ is an admissible control, $\Delta t := t_{k+1} - t_k = \frac{T}{K}$, and $\Delta  W\tkk := W\tkk - W\tk$. 
\vspace{2mm}

We denote by $\Theta$ a set of feasible parameters for the neural network structures used below. 

% Estimation of u and w
\subsection{Estimation of the domain boundary and the optimal control along the boundary} \label{subsect_train u and w}

    % Train u : architecture + loss function 
We first estimate the optimal control along the boundary which verifies \eqref{eq: HJB w}. Given the Markovian setting of the problem, we can naturally assume that any admissible control can take the form of a Borel map of time and state process. We hence restrict our search to controls of the form
\begin{equation*}
    \v^{\t, \varpi}\tk := n^{u,\varpi}_\t(t_k, X^{\v^{\t, \varpi}}\tk)
\end{equation*}
in which $n^{u,\varpi}_\t$ is a  fully connected neural network parameterized by $\t \in \Theta$ mapping from $\Pi_K \x \Rd$ to $U$.\\

To train $n^{u,\varpi}_\t$, we sample $J$ independent initial values $(X^j_{t_0})^{j=1, ..., J}$ along with $J$ independent paths $(\Delta W\jtk)_{k=1,...K}^{j=1,...,J}$. Then, recursively, we compute the corresponding control at each $t_k \in \{t_0, ..., t_{K-1}\}$ and roll forward the state variable following the Euler scheme \eqref{dyn_X_discrete}, i.e
\begin{align*}
    \v\tjw_{t_k} &= n^{u,\varpi}_\t(t_k, X^{\v\tjw, j}\tk) \\
    X^{\v\tjw, j}\tkk &= X^{\v\tjw, j}\tk + \mu(X^{\v\tjw, j}\tk, \v\tjw_{t_k}) \Delta t + \sigma(X^{\v\tjw, j}\tk, \v\tjw_{t_k}) \Delta W\jtkk 
\end{align*}
for $k = 0, ...,K-1$ and $j = 1,...,J$. Consequentially, to find the optimal control network (given the above-mentioned architecture), we define a training loss function which is the Monte-Carlo counterpart of \eqref{eq: def w} over the parameter $\t \in \Theta$: 
\begin{equation} \label{loss funcu_NN}
    \mathbf L^{u,\varpi}(\theta) := \frac1J \sum^{J}_{j=1} \left[ G( X^{\nu\tjw,j}_T) + \frac1K \sum^{K-1}_{k=0} g(  X^{\v\tjw,j}\tk, \v\tjw\tk) \right].
\end{equation}
We denote by $\hat\t^{u}_\varpi$ the optimal parameter obtained from \eqref{loss funcu_NN}, by $\hat \v^\varpi := n_{\hat \t^u_\varpi}^{u,\varpi}$, and by 
\begin{equation*}
    \hat \nu^{\varpi,j}:=\hat \nu^{\varpi}(\cdot,X^{\hat \nu^\varpi,j})
\end{equation*}
the control associated with the $j$-th path. \\

    % Train w : architecture + loss function + error measure for u and w
Now we estimate  $\varpi$ by using its PDE characterization as given in \eqref{eq: HJB w} with Physics-Informed Neural Network (PINN) method as first introduced in \cite{raissi_2019} \footnote{See Appendix \ref{append : discussion on implementation} for further details on the implementation of the PINN method.}. Namely, given another neural network $n^{\varpi}_\theta$, we  seek for $\hat \theta_\varpi$ which minimizes over $\theta\in \Theta$ the loss function 
\begin{align} 
    \mathbf L^\varpi(\theta) := \frac1J & \sum^J_{j=1} \frac1K \sum^{K-1}_{k=0} \left| \Lc^{\vhat^{\varpi,j}\tk}_X n^\varpi_\t (t_k, X^{\vhat^{\varpi,j},j}\tk)+g(X^{\vhat^{\varpi,j},j}\tk, \hat\v^{\varpi, j}\tk)\right|^2  \nonumber \\
    & +\frac1J \sum^J_{j=1}   \left| n^\varpi_\t(T, X^{\vhat^{\varpi,j},j}_T) - G(X^{\vhat^{\varpi,j},j }_T)\right|^2. \label{loss funcw_nn}
\end{align}

%%%%%%%%%%
This optimization leads to an estimation $\hat \varpi:=n^{\varpi}_{\hat \theta_\varpi}$ of $\varpi$. To verify the precision of our estimation, we test the result by checking how it deviates from the solution of  \eqref{eq: HJB w} with boundary condition $G$ at $T$. 
For this, we use the following heuristics:
\begin{align*}
\inf_{u\in U} \Lc^u_X \hat \varpi +g
&= \eps^u_\varpi+\eps^\varpi_\varpi\;\mbox{on } [0,T)\x \R^d,\;
\hat \varpi(T,\cdot)=G+\eps^T_\varpi\mbox{ on $\Rd$,}
\end{align*}
where 
\begin{align*}
\eps^u_\varpi(t,x)&:=\inf_{u\in U} \Lc^u_X \hat \varpi(t,x)-\Lc^{\hat \nu^{\varpi}(t,x)}_X \hat \varpi(t,x)
\\
\eps^\varpi_\varpi(t,x)&:=\Lc^{\hat \nu^{\varpi}(t,x)}_X \hat \varpi(t,x)+g(x,\hat \nu^{\varpi}(t,x))\\
\eps^T_\varpi(x)&:=\hat \varpi(T,x)-G(x)
\end{align*}
for $(t,x)\in [0,T]\x \Rd$. Hence, 
$$
\hat \varpi(t,x)=\inf_{\nu \in \Uc}\E\left[G(X_{t,x}^\nu(T))+\int_t^T g(X_{t,x}^\nu(s),\nu_s)ds+\eps^T_\varpi(X_{t,x}^\nu(T)) +\int_t^T\left[\eps^u_\varpi+\eps^\varpi_\varpi\right](s,X^\nu_{t,x}(s)ds\right],
$$
so that
\begin{align*}
    |\hat \varpi(t,x)- \varpi(t,x)|
    \le \sup_{\nu \in \Uc}\left|\E\left[\eps^T_\varpi(X_{t,x}^\nu(T))+\int_t^T \left[\eps^u_\varpi+\eps^\varpi_\varpi\right](s,X^\nu_{t,x}(s)ds\right]\right|.
\end{align*}
We can thus use the discrete time counterpart of the above  to estimate the error. To simplify, we do not perform the optimization over $\nu$ and simply estimate the normalized error: 
\begin{equation}\label{error measure Vb}
    \Ec^\varpi := \frac{\delta^\varpi}{\frac1J \sum^J_{j=1} \bigl|\hat\varpi(t_0, X^{\hat\v^{\varpi, j}, j}_{t_0})\bigr|}
\end{equation}
where 
\begin{equation} \label{numerator of error measure u and w}
\delta^{\varpi}:=\frac1J \sum_{j=1}^J\left\{ |\eps^T_\varpi(\check X^{\hat \nu^{\varpi,j},j}_{T})|+\frac1K\sum_{k=0}^{K-1} \Big(|\eps^u_\varpi|+|\eps^\varpi_\varpi|\Big)(t_k, \check X^{\hat \nu^{\varpi,j},j}\tk)\right\}.
\end{equation}

%%%%%%%%%%

% Estimation of Vb 
\subsection{Estimation of the value function on the domain boundary}
\label{subsect_train Vb}
    % Train Vb : architecture + loss function + error measure
Similar to the estimate of $\varpi$ in the previous section, we estimate $\bar \Vc$ by using its PDE characterization in Theorem \ref{thm : dirichlet condition} with a chosen neural network architect $n^{\bar \Vc}_\t$. We assume here that there is only one feasable control, which is the one estimated in the previous step. More concretely, we use PINN method to find the minimizer $\hat \t_{\bar \Vc}$ among feasible parameters $\Theta$ for the loss function 
\begin{align}
    \label{loss funcV_bound}
    \mathbf L^{\bar \Vc}(\t) := \frac1J & \sum_{j = 1}^J \frac1K \sum^{K-1}_{k=0} \left| \Lc^{\vhat^{\varpi,j}\tk}_X n^{\bar \Vc}_\t (t_k, X^{\vhat^{\varpi,j}, j}_{t_k}) +f(X^{\vhat^{\varpi,j}, j}\tk, \vhat^{\varpi, j}\tk) \right|^2 \nonumber \\
     & + \frac1J \sum_{j = 1}^J \left|n^{\bar \Vc}_\t (T, X^{\vhat^{\varpi, j}, j}_T) - F(X^{\vhat^{\varpi,j}, j}_T) \right|^2
\end{align}
where $X^{\vhat^{\varpi,j}, j}_{\cdot}$ is the $j$-th path generated by $\vhat^{\varpi,j}$ as described above in Section \ref{subsect_train u and w} \footnote{Note that even though this training can use the same training sample as in Section \ref{subsect_train u and w}, we generate a new sample to enhance the generalization of our training.}. We set $\hat \Vc := n^{\bar \Vc}_{\hat \t_{\bar \Vc}}$ as the estimate for $\bar\Vc$ with $\hat \t_{\bar \Vc}$ being the optimizer of \eqref{loss funcV_bound}. Similar to the previous section, we also propose an error measure for the joint training of $\hat \v^\varpi$ and $\hat \Vc$ defined by 
\begin{equation}\label{error measure Vb}
    \Ec^\Vc := \frac{\delta^\Vc}{\frac1J \sum^J_{j=1} \bigl|\hat \Vc(t_0, X^{\hat\v^{\varpi, j}, j}_{t_0})\bigr|}
\end{equation}
where 
\begin{equation}
    \delta^{\Vc} := \frac1J \sum^J_{j=1} \left\{ \left|\eVbT( X^{\vhat^{\varpi,j},j}_T)\right| + \frac1K\sum^{K-1}_{k=0} \Big(|\eps^u_\Vc|+|\eps^\Vc_\Vc|\Big)(t_k, X^{\vhat ^{\varpi,j}, j}\tk) \right\} 
\end{equation}
where 
\begin{align*}
    \e^u_\Vc &:= \max_{{\rm U}(t,x)}\Lc^u_X \hat \Vc (t,x) -  \Lc^{\vhat^\varpi(t,x)}_X \hat \Vc (t,x)\\ 
    \eVbVb(t,x)&:= \Lc^{\vhat^\varpi(t,x)}_X \hat \Vc (t,x) + f(x,\vhat^\varpi(t,x)) \\
    \eVbT(x) &:= \hat \Vc(T, x) - F(x)
\end{align*}
for $(t,x) \in [0,T] \times \Rd$.

% Estimation of V
\subsection{Estimation of the value function and the associated optimal control} \label{subsect_train V}

    % Train a : architecture + loss function
For the interior of the viable domain, we have to take into account the control $\a$ for the martingale representation $\Ptpa$. Consequentially, we consider controls of the form 
$$\upsilon\tV\tk = (\v\tV\tk, \a\tV\tk) := n^{u, a, V}_\t(t_k, X^{\v\tV}\tk, P^{\a\tV}\tk)$$
where $n^{u,a,V}_\t$ is a $U \x [-\Nc, \Nc]$-valued neural network parameterized by some $\t \in \Theta$. We independently draw a sample of initial states $(X^j_{t_0})^{j=1,..J}$ and brownian paths $(\Delta W^j_{t_k})^{j=1,..J}_{k=1,...K}$ from the corresponding distributions, and as for the initial constraints, we generate a new sample $(P^j_{t_0})^{j=1,..J}$ with
\begin{equation} \label{sampling P}
    P^j_{t_0} = \hat\varpi(t_0, X^j_{t_0}) + \epsilon_p^j
\end{equation}
where $\epsilon_p^j$ is drawn uniformly from $[0, \Ec_P]$ for some  $\Ec_P > 0$ significantly large. Then, the state process can be diffused in a similar manner as in Section \ref{subsect_train u and w}, meaning that, for each $j$-th path, we follow the scheme  
\begin{align*}
    \upsilon\tjV\tk & = (\v\tjV\tk, \a\tjV\tk) = n^{u, a, V}_\t(t_k, X^{\v\tjV, j}\tk, P^{\a\tjV, j}\tk)\\
     X^{\v\tjV, j}\tkk &= X ^{\v\tjV, j}\tk + \mu(X^{\v\tjV, j}\tk, \v\tjV\tk) \Delta t + \sigma(X^{\v\tjV, j}\tk, \v\tjV\tk)\Delta W^j\tkk \\
     P^{\a\tjV, j}\tkk & = P^{\a\tjV, j}\tk +  (\a\tjV\tk )^\top\Delta W^j\tkk
\end{align*}
for $ k = 0, ..., K - 1$. In light of the arguments in Section \ref{sect_approximation control bornes}, we restrict the set of values for $\a$ to $[-\Nc, \Nc]$ for some $\Nc > 0$ sufficiently large. \\

Then, among the possible $\t \in \Theta$, we look for the optimal $\hat \t^\upsilon_V$ which minimizes the Monte-Carlo counterpart of \eqref{prob_ori_sans borne}. To take into account  the domain constraint,  we impose an additional penalty term in the loss function to penalize any time the martingale  $P^{\a^{\t,V}}$ goes below the domain boundary $\hat \varpi(\cdot,X^{\v\tjV})$. Concretely, the loss function for our training is \\
\begin{align} \label{loss function u_a_nn}
     \mathbf{L}^{u,a,V}(\t) := - \frac1J &\sum^J_{j=1} \left[F(X^{\v\tjV,j}_T) + \frac1K \sum^{K-1}_{k=0}f( X^{\v\tjV,j}\tk, \v\tjV\tk) \right] \nonumber\\
        & + \frac1J \sum^J_{j=1} \frac1K \sum^{K-1}_{k=0} \left( P^{\a\tjV, j}\tk - \hat\varpi(t_k, X^{\v\tjV,j}\tk)\right)^2.
\end{align} 
Let us denote by  $\hat \upsilon^V := n^{u,a,V}_{\hat \t^\upsilon_V}$ the estimated optimal control function and by 
$$
(\hat\v^{V,j}, \hat \a^{V,j}) := \hat \upsilon^{V,j} := \hat \upsilon^V(\cdot, X^{\hat\v^{V, j},j}_\cdot, P^{\hat\a^{V, j},j}_\cdot)
$$
the optimal control associated with the $j$-th path. \\

    % Train V : 
We then proceed with the estimation of the value function $\bar V$ using its PDE characterization in Theorem \ref{thm : pde interior domain} while applying the PINN method similar to Section \ref{subsect_train Vb}. In particular, we consider neural networks of the form $n^V_\t$ for some $\t \in \Theta$ and look for the minimizer $\hat \t_V$ of the loss function 
\begin{align} \label{loss func V}
    \mathbf L^V(\t) :=\frac1J &\sum^J_{j=1} \frac1K \sum^{K-1}_{k=0}\left| \Hc^{\hat\upsilon^{V,j}\tk} n^V_\t(t_k, X^{\hat\v^{V,j},j}\tk,P^{\hat\a^{V,j},j}\tk+\epsilon^j_k) + f(X^{\hat\v^{V,j},j}\tk, \hat \v^{V,j}\tk)\right|^2 \nonumber \\ 
    & + \frac1J\sum^J_{j=1} \frac1K \sum^{K-1}_{k=0}\left|n^V_\t\left(\cdot, \hat \varpi(\cdot)\right) -\hat \Vc\right|^2(t_k, X^{\hat\v^{V,j},j}\tk) \nonumber \\
    & + \frac1J\sum^J_{j=1}\left|n^V_\t(T,X^{\hat \v^{V,j},j}_T,P^{\hat \a^{V,j},j}_T) - F(X^{\hat \v^{V,j},j}_T)\right|^2 
\end{align}
where $\epsilon^j_k := 2\left(\hat\varpi(t_k, X^{\hat\v^{V,j},j}\tk)-P^{\hat\a^{V,j},j}\tk\right)^+$ and 
\begin{equation*}
    \Hc^v \varphi := \partial_t \varphi + \hat\mu(\cdot,u)^\top D_{(x,p)}\varphi + \tr{\hat\sigma\hat\sigma^\top(\cdot, u,a)D^2_{(x,p)}\varphi} - g(\cdot, u) \partial_p \varphi + f(\cdot, u)
\end{equation*}
for any mapping $\varphi \in C^{1,2}_b([0,T]\x\Rd\x\R, \R)$ and any control $v =(u,a) \in U \x [-\Nc, \Nc]$. The first line in \eqref{loss func V} corresponds to the PDE in the interior of the domain (the $\epsilon^j_k$s ensuring that the evaluated points actually belong  to the interior of the domain - up to the possibility of having $\hat\varpi(t_k, X^{\hat\v^{V,j},j}\tk)=P^{\hat\a^{V,j},j}\tk$, which we neglect here), the second and third  lines account for the Dirichlet conditions on the boundary. 
\\

    % Error measure : 
We   denote by $\hat V := n^V_{\hat \t_V}$ the neural network with the optimal parameters $\hat\t_V$, and, as in the previous steps,  we introduce the  error measure  
\begin{equation}\label{error measure V}
    \Ec^V := \frac{\delta^{\bar V}}{\frac1J \sum^J_{j=1}\bigl|\hat V(t_0, \check X^{\hat\v^{V,j}, j}_{t_0},\check P^{\hat\a^{V,j}, j}_{t_0})\bigr|}
\end{equation}
where  
\begin{equation}
    \label{numerator error measure V}
    \delta^{\bar V} := \frac1J\sum^J_{j=1}\left[\frac1K\sum^{K-1}_{i=0}\Big( \big| \e^\upsilon_V\big| + \big| \e^V_V\big| + \big| \e^b_V\big|\Big)(t_k, \check X^{\hat\v^{V,j},j}\tk,\check P^{\hat\a^{V,j},j}\tk)+\left|\e^T_V(\check X^{\hat\v^{V,j},j}_T, \check P^{\hat\a^{V,j},j}_T)\right|\right]
\end{equation}
with 
\begin{align*}
    \e^\upsilon_V \txp &:=\mathbf{1}_{\{p > \hat \varpi(t,x)\}} \left(\max_{v \in U \times [-\Nc,\Nc]} \Hc^v\hat V - \Hc^{\hat \upsilon^V}\hat V\right)\txp \\
    \e^V_V (t,x,p) &:= \mathbf{1}_{\{p > \hat \varpi(t,x) \}} \Hc^{\hat \upsilon^V}\hat V\txp \\
    \e^b_V(t,x,p) &:= \mathbf{1}_{\{p = \hat \varpi(t,x) \}} \left(\hat V\txp - \hat\Vc(t,x)\right) \\
    \e^T_V(x,p) &:= \hat V\txp - F(x) 
\end{align*}
for any $\txp \in [0,T]\times \Rd \times \R$ and  $v = (u,a) \in U \x [-\Nc,\Nc]$. 

% Alternative training method
\subsection{Alternative estimation method}

    % Different architecture 
Besides the estimation method proposed above, we have also experimented with a different architecture inspired by \cite{han_deep_2016} for the control neural networks $\v^{\t, \varpi}$ and $\upsilon^{\t, V}$. More specifically, we  considered a form of ``nested" neural network consisting of a sequence of sub-networks where each sub-network is a fully connected neural network designated to estimate uniquely the control at time $t_k$. We  also experimented with changing the diffusion process to reflect the absorbing nature of the domain boundary by imposing to follow the space boundary once it is reached (or crossed). Overall, results were pretty poor and the optimal control trajectories we obtained with this approach were quite counterintuitive. Still we explained below this alternative approach.\\

    % Optimal control on the boundary
        % architecture
Firstly, for the optimal control on the boundary $\v^{\t, \varpi}$, we tried to  take a neural network of the form
$$
\tilde n^{u, \varpi}_\t = (\tilde n^{u, \varpi, k}_\t)_{k = 0,..., K-1}
$$
such that for each $t_k \in \Pi_K$, the sub-network $\tilde n^{u, \varpi, k}_\t$ is a fully-connected neural network with $l$ hidden layers $h^1_k, ...h^l_k$, as shown in Figure \ref{fig: Nested NN}. This implies that the discrete time partition $\Pi_K$ is practically embedded in the nested structure of the global neural network, and the number of parameters of the global network $\tilde n^{u, \varpi}_\t$ scales linearly with the number of periods $K$.

        % diffusion scheme
\begin{figure}[H]
    \centering
    \includegraphics[width=\textwidth]{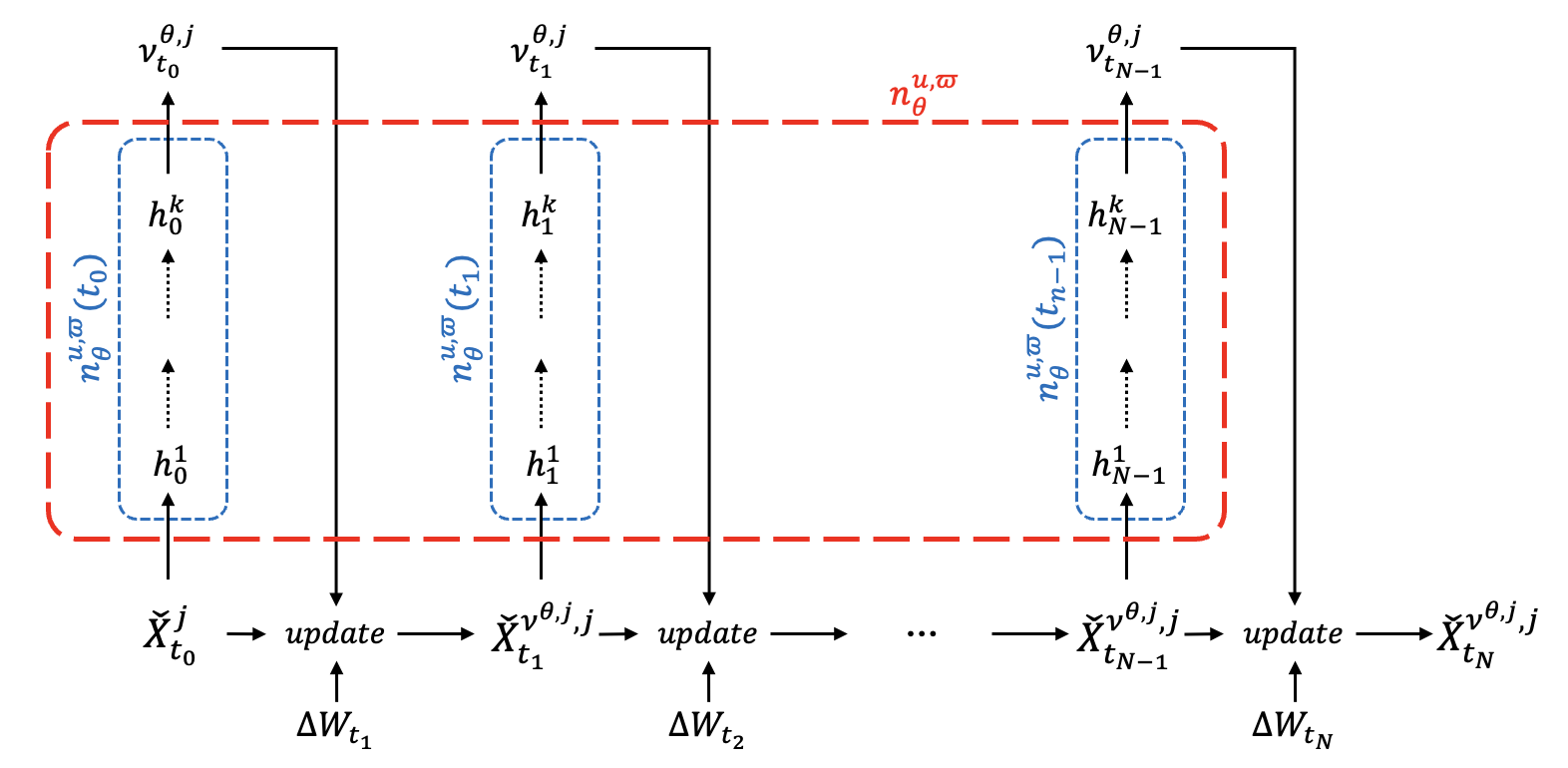}
    \caption{Diffusion of the state process $X^{\tilde \v\tjw,j}$ with neural network $\tilde n^{u, \varpi}_\t$} with the alternative ``nested" architecture.
    \label{fig: Nested NN}
\end{figure}
The diffusion of the state process $X^{\tilde \v ^{u, \varpi}}$ then follows the Euler scheme: \footnote{We use the notation $\tilde \v^{u, \varpi}$ to distinguish the control process by alternative architecture from the one by the officially chosen method, $\v^{u, \varpi}$, in Section \ref{subsect_train u and w}} 
\begin{align*}
    \tilde \v\tjw\tk &:= \tilde n^{u, \varpi, k}_\t(X^{\tilde \v\tjw,j}\tk) \\
    X^{\tilde \v\tjw,j}\tkk &= X^{\tilde \v\tjw,j}\tk + \mu(X^{\tilde \v\tjw,j}\tk, \tilde \v\tjw\tk) \Delta t + \sigma(X^{\tilde \v\tjw,j}\tk, \tilde \v\tjw\tk)\Delta W^j\tkk,
\end{align*}
given some initial states $(X^j_{t_0})^{j=1, ... J}$ and brownian increments $(\Delta W^j\tk)^{j=1, ... J}_{k = 1,...K}$. The training proceeds with the  minimization of  the same loss function as in \eqref{loss funcu_NN}.\\

    % Optimal control in the interior 
As for the optimal control in the interior of the domain, in addition to the structure with sub-networks introduced above, we also experimented with changing the diffusion 
process to reflect the absorbing nature of the domain boundary. In particular, for a neural network of the form
$$
\tilde n^{u, a, V}_\t = (\tilde n^{u,a,V,k}_\t)_{k=0,...K-1},
$$
where the sub-network $\tilde n^{u,a,V,k}_\t$ is $U\x [-\Nc, \Nc]$-valued, we denote by
\begin{equation*}
    \tilde \upsilon\tjV\tk = (\tilde \v\tjV\tk, \tilde \a\tjV\tk) := \tilde n^{u,a,V,k}_\t(X^{\tilde \v\tjV,j}\tk, P^{\tilde \a\tjV,j}\tk)
\end{equation*}
the control estimated by the $k$-th subnetwork for the $j$-th path at $t_k$ and by
\begin{equation*}
    \tilde \tau^j := \inf\{ t_k \in \Pi_K : \hat \varpi(t_k, X^{}\tk) \geq P^{\tilde \a}\tk\} \wedge T
\end{equation*}
the first time the $j$-th state process reaches (or crosses) the domain boundary. To enforce the absorbing nature of the domain boundary, we explicitly force the state control estimated by $\tilde n^{u,a,V}_\t$ to equate the state control on the boundary as estimated by $\tilde n^{u, \varpi}_\t$, meaning that
\begin{align*}
   \tilde \upsilon^{\t, V,j}\tk &= 
        \begin{cases}
            (\tilde \v^{\t,V,j}\tk, \tilde \a^{\t,V,j}\tk) & \text{ if } t_k \leq \tilde \tau^j \leq T \\
            (\tilde \v^{\t, \varpi,j}\tk, 0) &\text{ if } \tilde \tau^j < t_k \leq T ,
        \end{cases}
\end{align*}
and the Martingale $P^{\tilde \a\tjV, j}$ is set to be equal to $\hat \varpi(\cdot,X^{\tilde \v\tjV, j})$ in the case of reaching the spatial boundary before the end of the time horizon, while the state process follows the Euler scheme. Meaning that:
\begin{align}
    X^{\tilde \v\tjV, j}\tkk &= X^{\tilde \v\tjV, j}\tk + \mu(X^{\tilde \v\tjV, j}\tk, \tilde \v\tjV\tk) \Delta t + \sigma(X^{\tilde \v\tjV, j}\tk, \tilde \v\tjV\tk)\Delta W^j\tkk \nonumber \\
    P^{\tilde \a\tjV, j}\tkk &= 
        \begin{cases}
            P^{\tilde \a\tjV, j}\tk + (\tilde \a\tjV\tk)^\top \Delta W^j\tk & \text{ if } t_k \leq \tilde \tau^j \leq T \\
            \hat \varpi(t_k, X^{\tilde \v\tjV, j}\tk) & \text{ if } \tilde \tau^j < t_k \leq T 
        \end{cases} \label{dyn P discrete}
\end{align}
for any $k = 0, ..., K - 1$. With such diffusion, it is important to keep track of the stopping time $\tilde \tau$, but there is no further need for the penalty term in the training loss function for $\tilde n^{u,a,V}_\t$. Therefore, the alternative for the loss function \eqref{loss function u_a_nn} was 

\begin{equation*}
    \mathbf{\tilde L}^{u,a,V}(\t) := -\frac1J \sum^J_{j=1} \left[F(X^{\tilde\v\tjV,j}_T) + \frac1K \sum^{K-1}_{k=0}f( X^{\tilde\v\tjV,j}\tk, \tilde\v\tjV\tk) \right].
\end{equation*}

    % Comparison in terms of training results
All other estimation steps, which involve the PINN training guided by the PDE characterization, remained unchanged. 
%The \footnote{Python implementation code for the alternative method can be found at \url{https://github.com/KAPhm/optimal_control_alternative_method}.} \\

In comparison to the  method introduced in Sections \ref{subsect_train u and w}-\ref{subsect_train Vb}-\ref{subsect_train V}, this alternative method yields poorer performance when applied to the practical model introduced in Section \ref{subsect_ALM model} and proved to have more  difficulties in finding the optimal parameters. This is potentially due to the complexity of the nested architecture which scales linearly with the number of periods $K$. Another possible reason for this gap in performance can also be the treatment of the time variable. In particular, time is taken as an input for the neural network in the  algorithm of Sections \ref{subsect_train u and w}-\ref{subsect_train Vb}-\ref{subsect_train V}, and enhancement techniques such as encoding time variable with a sinusoidal encoder, as inspired by \cite{vaswani_attention_2023}[Section 3.5], can be implemented. On the contrary, this possibility of adding such enhancement in the alternative method is nullified due to the fact that the time horizon is already implicitly embedded in the ``nested'' architecture of the network. \\

On the other hand, with the explicit embedding of the domain boundary's absorption described in \eqref{dyn P discrete},  there is theoretically lower chance of having an ``overshoot'' due to discretization, i.e $P^{\tilde \a\tjV,j} < \hat \varpi(\cdot, X^{\tilde \v\tjV,j})$. More concretely, the diffusion in \eqref{dyn P discrete} guarantees that overshoot is possible solely at $\tilde \tau^j$ in the alternative method whereas overshoot due to discretization can happen at any moment and can increase over time  in the method of Sections \ref{subsect_train u and w}-\ref{subsect_train Vb}-\ref{subsect_train V}, and is only minimized through the penalty term introduced in the loss function \eqref{loss function u_a_nn}. On the other hand, the presence of the stopping time in \eqref{dyn P discrete} seemed to create difficulties for the optimization process of the neural network.

% ---------------------------------
%%% Application to practical model %%%
%----------------------------------
\section{An application to Asset-Liability Management} \label{sect_toy example}

This work is initially motivated by applications to an Asset-Liability Management problem. In this section, we describe a very rustic toy model as a first example of practical implementation of our theoretical results. 

% Presentation of ALM problem 
\subsection{An asset-liability management problem from life insurance} \label{subsect_ALM model}
Let us consider a toy model inspired by the Asset-Liability Management (ALM) problem for a profit-sharing life insurance product in France, commonly known as \textit{Fond Euro}. \\

We assume a market with one risky asset $S$ without dividend and a fixed risk-free interest rate $r$. At any moment $t$ within the time horizon of interest $[0,T]$, the status of the portfolio is represented by the market value of the asset $S_t$, the quantity of asset $\phi_t$, the amount of cash $\b_t$, and the insurer's liability towards the insured $L_t$, which is also known as \textit{Mathematical Provision}. Following the accounting convention, we consider the book value $\tilde S_t$ of the risky asset. For simplification, we assume that the risky asset is available for trading without transaction cost at any moment $t$ in the time horizon \footnote{This implies that there is neither default risk nor illiquidity risk in this market.} and that the book value is constant throughout the time horizon, \textit{i.e.} $\tilde S_t = \tilde S_0 =: \tilde S$ for all $0\leq t \leq T$. Hence, the state variable in this model is $X = (S, \phi, \b, L)$.  The strategy of the insurer is represented by the control variable $$\nu = (\dot \phi, \pi)$$ 
where $\dot \phi$ is the trading intensity of the risky asset and $\pi$ is the proportion of the financial profit to be shared with the insured. It should be noted that the insurer is a price taker whose (trading) decision does not affect the asset market value $S$. Furthermore, the insurer's control is limited by the market and the regulation, meaning that  $\dot \phi$ must stay within a fixed range $[\underline{ \dot \phi}, \overline {\dot \phi}]$ (market trading capacity) and likewise, $\pi$ within $[\underline \pi, \overline \pi]$ (regulation requirement on profit sharing).

% Mathematic framework 
\subsection{Mathematical framework for life insurance ALM}
    % Asset + Liability modeling (dynamic under control)
We assume that the price of the risky asset follows a Black-Scholes dynamic 
\begin{equation*}
    dS_t = \mu_S S_t dt + \sigma_S S_t dW_t 
\end{equation*}
where $W$ is a Brownian motion, $\mu_S \in \R$, and $\sigma_S$ is a positive constant. Under the French accounting convention, the profit producted at $t$ includes the inflows generated by all assets, which is the interest from cash in this case, and the capital gain/loss realized in case of an asset sale. Hence, the intensity of profit produced $\dot \eta_t$ at $t$ is   
\begin{equation} \label{eq: def profit intensity}
\dot \eta^\v_t = \beta^\v_t r + (\dot \phi_t)^-  (S_t - \tilde S)
\end{equation}
where  $(\dot \phi_t)^- = \max (0, -\dot \phi_t)$ is the quantity of the risky asset sold at $t$. Out of the total profit generated, a proportion $\pi_t$ is shared with the insured, and after sharing the financial profit, the insurer observes a certain level of lapses which can be decomposed into 2 types, cyclical and dynamic surrenders. More specifically, dynamic surrender is triggered by the difference between the profit shared by insurer and the risk-free interest that the insured could have received from a simple saving account. On the other hand, cyclical surrender is not related to the amount of profit shared but rather caused by non-financial reasons such as mortality, disability, or lifestyle's cycle. We define the lapse intensity as
\begin{equation} \label{eq: def lapse intensity}
\dot \gamma^\v_t := \left[\gamma^c + \gamma^d\left(r - \frac{\pi_t \dot \eta^\v_t}{L^\v_t}\right)\right]{\bf 1}_{\{L^\v_t>0\}}
\end{equation}
where $\gamma^c$ represents the cyclical lapse intensity and $\gamma^d $ is the scaling factor showing the (linear) relationship between dynamic surrender and the earning opportunity cost of investing into this life insurance product instead of the risk-free bank account. Naturally, the more profit the insurer shares with their clients, the lower the dynamic surrender rate is, which implies that both $\gamma^c$ and $\gamma^d$ are positive. Thus, after taking into account profit sharing and lapses, the dynamic of the mathematical provision is
$$
dL^\v_t = (\pi_t \dot \eta^\v_t - \dot \gamma^\v_t L^\v_t) dt = \left[ (1 + \gamma^d) \pi_t \dot \eta^\v_t - (\gamma^c + \gamma^d r) L^\v_t \right] dt. 
$$
Since the cash $\beta^\v$ is used as means for paying out claims and as residual for portfolio reallocation, its dynamic is
$$
d\beta^\v_t = \left[ \beta^\v_t r - \dot \phi_t  S_t - \dot \gamma^\v_t L^\v_t \right] dt.
$$
We also update the new quantity of the assets held in the portfolio post-reallocation with
$$d\phi^\v_t = \dot \phi_t dt.$$ 
Thus, the dynamic of the state process $X^\v=(S,\beta^\v, \phi^\v,L^\v)$ and control $\v=(\dot \phi,\pi)$ corresponds to the drift and volatility coefficients
\begin{align}\label{model : driff and vol of X}
    b(X_t^\v,\v_t) = \begin{bmatrix}
        \mu_S S_t \\ 
        \b^\v_t r - \dot{\phi_t}  S_t - \dot \gamma^\v_t L^\v_t \\
        \dot \phi_t \\
        \pi_t \dot \eta^\v_t - \dot \gamma^\v_t L^\v_t 
    \end{bmatrix}
    \text{ and }
    \sigma(X_t, \nu_t) = \begin{bmatrix}
        \sigma_S S_t\\
        0 \\
        0 \\
        0 
    \end{bmatrix}
\end{align} 
 given the definition of $\dot \eta_t$ and $\dot \gamma_t$ as in \eqref{eq: def profit intensity}-\eqref{eq: def lapse intensity}. \\

    % Cost and utility functions
The insurer seeks at controlling the expectation of the truncated difference between liabilities and total wealth at $T$: 
\begin{equation}
    G(X_T^\v) = \underline \Gc \vee \left[L^\v_T - (\phi_T^\v S_T + \beta_T^\v) \right] \wedge \overline \Gc 
\end{equation}
for some $\underline \Gc <0 < \overline \Gc$. Additionally, throughout the time horizon, we integrate some state constraints which reflect the managerical strong preferences against certain scenarios. More specifically, for any $t \in [0,T]$, we penalize for the cases of bankruptcy, of negative mathematical provision, and of negative cash respectively with the following running cost functions
\begin{align}
    g^1(X_t^\v) &= (L_t^\v - \phi_t^\v S_t - \beta_t^\v)^+ \label{constraint: no bankruptcy} \\
    g^2(X_t^\v) &= (L_t^\v)^- \label{constraint: non-negative PM} \\
    g^3(X_t^\v) &= (\beta_t^\v)^- \label{constraint: non-negative cash}
\end{align} where $x^+ := \max(x,0)$ and $x^- := \max(-x,0)$ for any $x \in \R$. We therefore define the total penalty $g$ as
$$
g := \kappa (g^1 + g^2 + g^3),
$$
for some $\kappa>0$.
Under the expectation constraint associated with $G$ and $g$, the objective is to maximise the utility of the terminal capital gains, which is defined by : 
$$
F(X_T^\nu)=\underline{\Fc} \vee -\frac1\zeta e^{-\zeta(\phi_T^\v S_T + \beta_T^\v-L^\v_T)} 
$$
where $\zeta \in (0, 1)$ is the risk aversion coefficient, and $\underline \Fc <0$ \footnote{We set $f = 0$ since there is no intrinsic accumulative reward term in this model.}.

% Heurestic analysis of the optimization problem
\subsection{Formal analysis of the optimization problem} \label{sect_analysis}
Before diving into the numerical resolution of our toy model, we would like to give some intuition on the solution of the problem, which can also be helpful later on when we want to verify the soundness of our estimations in the upcoming section. \\

    % Dynamic of wealth
For this, we  look at the dynamic of the net wealth $\phi^\nu_t S_t + \beta^\nu_t - L^\nu_t$ which is crucial to multiple parts of our optimization : in the terminal utility function $F$ to be maximized, in the terminal constraint associated to $G$, and also in the bankruptcy penalty $g^1$. In light of \eqref{eq: def lapse intensity}-\eqref{eq: def profit intensity}-\eqref{model : driff and vol of X}, we observe that 
\begin{align*}
    & d(\phi^\nu_t S_t + \beta^\nu_t - L^\nu_t)  \\
    & = \phi^\nu_t dS_t + S_td\phi^\nu_t + d\b^\nu_t - dL^\nu_t \nonumber \\
    & = \left[\phi^\nu_t \mu_S S_t +\b^\nu_t r - 
 \pi_t \left(\b^\nu_t r + (\dot \phi_t)^-(S_t - \tilde S) \right) \right]dt + \phi^\nu_t \sigma_S S_t dW_t.  \nonumber 
\end{align*}

Let us first disregard the dependence of the cash dynamics $\beta^\nu$ with respect to the lapse rate $\dot{\gamma}^\nu$. When the risky asset price is at gain, i.e.~$S\ge \tilde S$, then the drift is maximized by avoiding sells, i.e. $\dot \phi^-=0$, and by sharing at minimum the profits generated by the bank account, i.e.~$\pi=\underline{\pi}$. The other way round when losses are significant and $\beta^\nu r+|\underline{\dot \phi}| (S-\tilde S)\le 0$. When $|\underline \Gc|$ and $\overline \Gc$ are large, this  mainly drives the capacity to satisfy  the terminal constraint in expectation, while, for $|\underline \Fc|$ large, this is also an important source term for the terminal expected utility, up to the additional variance induced by $\int_0^T \phi^\nu_t \sigma_S S_t dW_t$ which is penalized by large values of $\phi^\nu$.

In practice, the situation is a little more complex as, for instance, selling stocks at losses increases the lapse intensity, see \eqref{eq: def lapse intensity}, which in turn has a negative impact on the cash process $\beta^\nu$. 

Still, from the above, one can expect that, in this very simplistic toy model and all other things being equal, the controller has a tendency to buy shares when they are at gain and keeps profits for him, and, at the opposite, sell shares when they are sufficiently at loss and then share losses with the insured. This should be more pregnant when the system is close to saturating the expectation constraint so that this one drives the all system and annihilates the  impact of the variance term induced by $\int_0^T \phi^\nu_t \sigma_S S_t dW_t$ (which, again, should play a very marginal role in the constraint in expectation for large values of $|\underline \Gc|$ and $|\overline \Gc|$).
\vspace{2mm}

  % Optimizer at extremities 
Finally note that   
$$
u\mapsto \Lc^u_X \vp  + g(\cdot, u)
$$ 
and  that (recall that $f\equiv 0$)
$$
u\mapsto \hat \mu(\cdot,u)^{\top} D_{(x,p)}\vp + \frac12 {\rm Tr}\left[(\hat\sigma\hat\sigma^{\top})(\cdot,u,\cdot)D^2_{(x,p)}\vp\right]-g(\cdot, u) D_{p}\vp
$$
are linear. 

This implies that the optimizers can be found in $\{\underline{\dot \phi},0,\overline{\dot \phi}\}\times \{\underline \pi,\overline \pi\}$. We therefore expect the optimal control $\nu$ to be of bang-bang form.

\subsection{Numerical resolution of the toy model}

    % Some information about global parameters + reference to the Annex A and Annex B 
We now apply the numerical algorithm of Section \ref{sect_algo} to the model described Section \ref{subsect_ALM model} with a time horizon discretized into $K=256$ equal periods, a market with book value of the risky asset $\tilde S = 200$ \euro{}  and a fixed interest rate $r = 3 \%$. A detailed discussion on the algorithm's implementation is deferred to Annex \ref{append : discussion on implementation}, and all model parameters as well as training hyperparameters are provided in \ref{append : parameters}. The Python code can be found at \url{https://github.com/KAPhm/stochastic_optimal_control_constraint} repository on GitHub. \\

    % Error measure
We report a total error measure of $\Ec^V = 0.069433\%$ for the joint training over a sample of size $J=10,000$. More specifically, the decomposition of the errors is $0.009776\%$ for the estimation of control $\hat \upsilon^V$ (associated with $\e^\upsilon_V$), $0.039712\%$ for the estimation of the value function before the final time (associated with $\e^V_V$), $0.009813\%$ for that at the space boundary (associated with $\e^b_V$), and $0.010131 \%$ for that at final time (associated with $\e^T_V$).\\
   
    % Commentary
In Figures \ref{fig: visual a 1} and \ref{fig: visual a 2}, we provide  illustrations of the behavior of the optimal controls as a qualitative validation of the training result. In both cases, the strategy essentially consists  in selling the stock when it is significantly at loss with respect to the book value. Losses are shared with the insured, meaning that the proportion of profit sharing is maximal in case of significant losses. On the contrary, when the price of the stock is sufficiently close to or bigger than the book value, stocks are bought at a maximum rate, and profit sharing is set at its minimum. This is consistent with all the very numerous numerical experiments we performed. Basically, the insurer keeps profits and share loses.  Obviously, this particular behavior is related to the performance criteria we used and has nothing to do with practice, but this is just a toy model. As expected, the optimal control values lie primarily
at the extremes of the permissible ranges. Numerically, it can not be exactly the case since we use smooth neural networks.\\

Additionally, comparing Figures \ref{fig: visual a 1} and \ref{fig: visual a 2} gives the impression of a greater sensitivity to price movements when the state variable gets closer to the boundary of the domain, i.e. when $(t, X_t, P_t)$ approaches $(t, X_t, \varpi(t, X_t))$. In all our numerical experiments, the optimal control is sufficiently well estimated to avoid the case where the boundary is crossed significantly (see e.g. Figure \ref{fig: visual a 2} in which the optimal path {\sl visually} follows the boundary after it reaches it). We recall that, due to the discretization of the time horizon, a crossing of the domain boundary, or so-called an overshoot, can happen. \\

All this is perfectly  consistent with the a-priori analysis of Section \ref{sect_analysis}.\\

Cognizant of the simplicity of the current ALM model, which is a toy model intended for the illustration of the numerical method introduced in Section \ref{sect_algo}, we defer the development of a more complex and realistic model and further testing or improvement of the algorithm to later research.\\

% Figures 
\begin{figure}[H]
    \centering
    \includegraphics[width=0.85\textwidth]{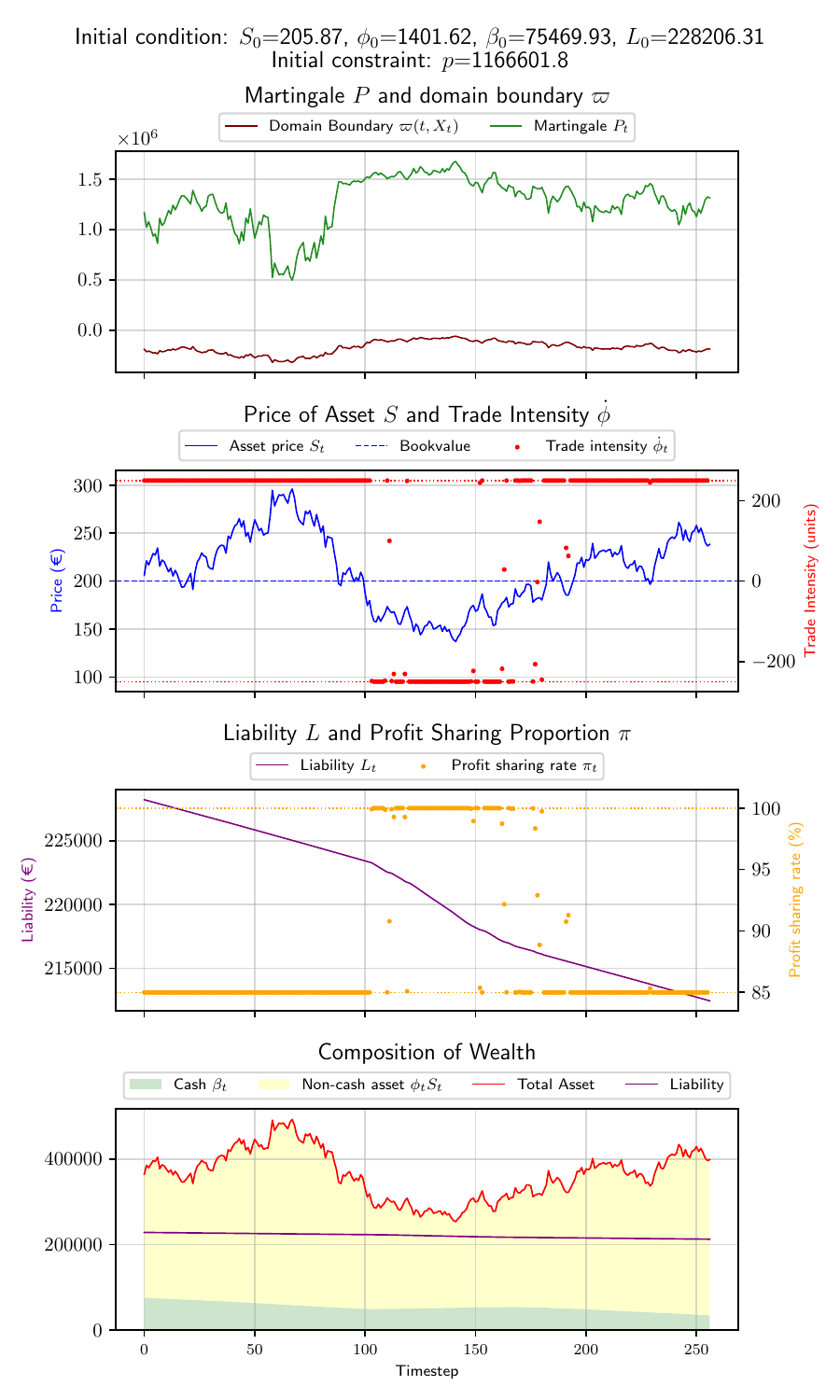}
\caption{Example of a simulated path with state variable remaining far from the boundary. When the stock price is significantly at loss, the insurer sells it and shares his loss with the insured.}
    \label{fig: visual a 1}
\end{figure}

\begin{figure}[H]
    \centering
    \includegraphics[width=0.85\textwidth]{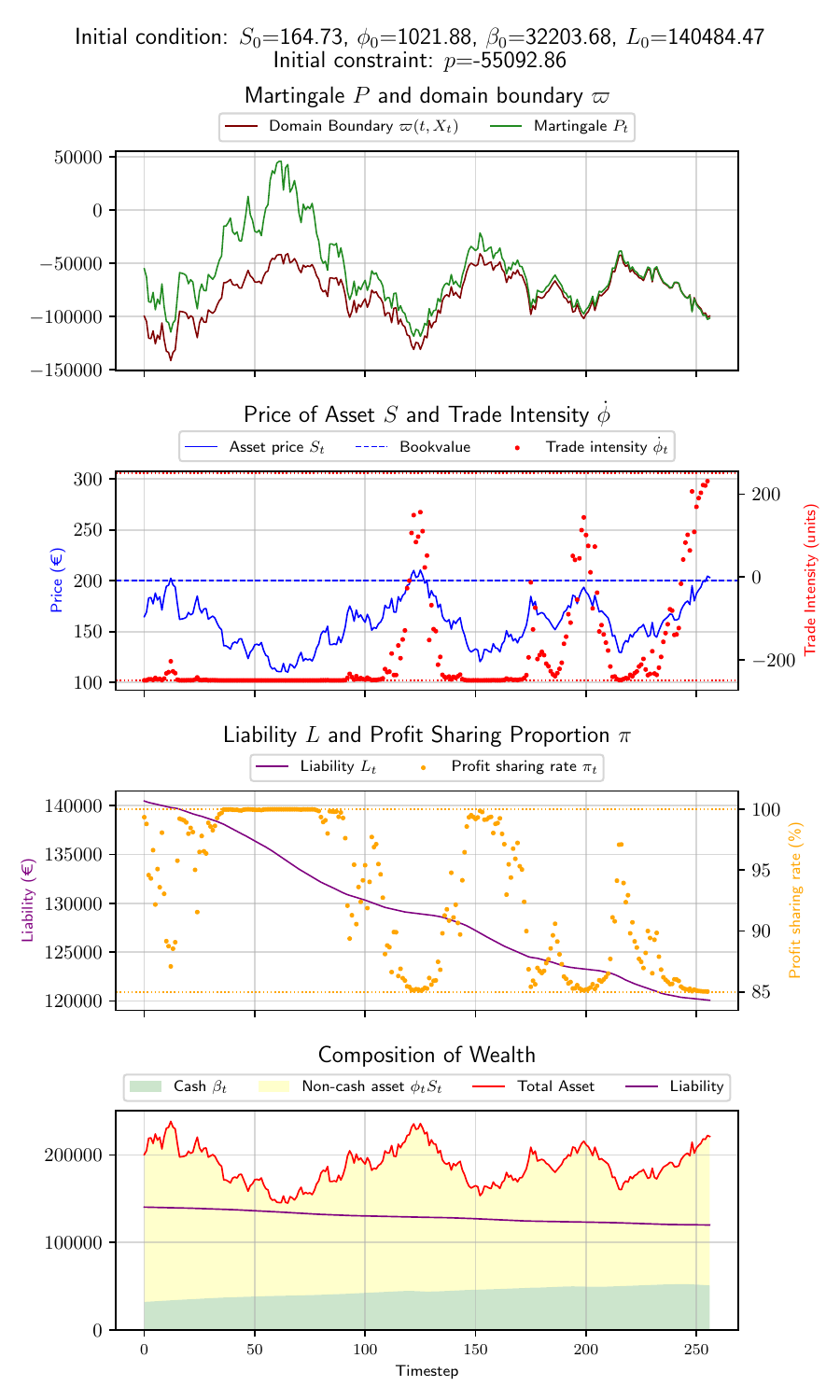}
    \caption{Example of a path reaching the boundary of the domain. Once the domain's boundary is reached (around $t_{225}$), the state process is "absorbed" by it until the end of the time horizon.}
    \label{fig: visual a 2}
\end{figure}

% ---------------------------------
%%% Proofs %%%
%----------------------------------
\section{Proofs of the PDEs characterization} \label{sect_proofs}

We provide in this section the proofs of the main results of Section \ref{sect_formulation}.
 
% Proof for smoothness of w
\subsection{Smoothness of $\varpi$}\label{sec: smoothness w}

\no The smoothness result of Proposition \ref{prop: w smooth} is expected. Since we work on an unbounded domain, we provide a self-contained proof of the required local estimates by lack of an appropriate reference.  In particular, we make use of the arguments in \cite{abeille_diffusive_2023} and \cite{andrews_fully_2004}. 

\begin{proof}[Proof of Proposition \ref{prop: w smooth}]	
	\no  Define 
	$$ 
	r(x,u):=\Lc_{X}^{u}G(x)+g(x,u),\;(x,u)\in \R^d\x U,
	$$
	and recall from Assumption \ref{assum_g_holder} that $r$ is well-defined, bounded and H\"older continuous.
	By It\^{o}'s Lemma,
	$$
	\tilde w(t,x):=\varpi(t,x)-G(x)=\inf_{u\in \Uc}\Ex{\int_{t}^{T}r(X_{t,x}^{\nu}(s),\nu_{s})ds}
	$$
	and we just need to prove the required the regularity for $\tilde w$. More specifically, we claim that $\tilde w$ is the unique smooth solution with polynomial growth of 
    \begin{equation} \label{PDE_w_til}
        \begin{split}
        \partial_t \phi + L(\cdot,D\phi,D^2\phi) &= 0 \text{ on } \domint \\
        \phi(T,\cdot) &= 0 \text{ on } \Rd
        \end{split}
    \end{equation} 
    in which 
    \begin{equation*}
		L(x,p,A) = \inf_{u \in U}  \left( \mu(x,u)^{\top}p  + \tr{\sigma \sigma^\top (x,u) A} + r(x,u) \right) \text{ for } (x,p,A) \in \Rd \times \Rd \times \Sd.
	\end{equation*}

	\no \textbf{a. Existence and smoothness on a bounded domain.} \\
    \no As in \cite{abeille_diffusive_2023}, we first restrict to a bounded domain. Recall that $B_{k}$ denotes the ball of radius $k>0$ centered at $0$ and define 
	\begin{equation}
		\tilde w_k(t,x) = \inf_{\nu \in \Uc} \Ex{\int_{t}^{T \wedge \t^{t,x,\nu}_k} r(X^{\nu}_{t,x}(s), \nu_s)ds}
	\end{equation}
	where $ \t^{t,x,\nu}_k := \inf\{s \geq t : X^{\nu}_{t,x}(s) \notin B_k\}$.   
	Then, it follows from \cite[Theorem 14.24]{lieberman_second_2005}, recall Assumption \ref{assum_sigma_unif_elliptic}, and a standard verification argument that $\tilde w_{k}$ is a smooth solution of 
	\begin{align}\label{eq: pde wk} 
	0=L(\cdot,D\tilde w_{k},D^{2}\tilde w_{k})\mbox{ on } [0,T)\x B_{k},\; \tilde w_{k}=0\mbox{ on } \partial_{P}( [0,T)\x B_{k}),
	\end{align}
	in which $\partial_P$ denotes the parabolic boundary of a subset of $[0,T]\x \R^d$.
	Upon using a standard smoothing argument, one can assume from now on that   {$L$} is continuously differentiable in all its arguments. In the following, we denote by $C>0$ a generic constant whose value may change from line to line but which does not depend on $(t,x,u)\in [0,T]\x \R^{d}\x U$ nor $k$. \\
	
	{\bf a.1. A-priori bound for first order derivatives :} \\
	\no The bound on $r$ implies that $|\tilde w_k(t+h,x)-\tilde w_k(t,x)| \leq C h$, for $h\in (0,T-t)$, which leads to
	\begin{align} 
		||\partial_t \tilde w_k||_\infty &\leq C \label{v_t_bounded} \\
		||\tilde w_k(t,\cdot)||_\infty &\leq C({T-t}), \;t\le T. \label{v_bounded}
	\end{align}
\no Similarly, for the $i$-th unit vector $e_i \in \Rd$ and $h\in (-1,1)$ such that $x+he_i \in B_k$, we have
	\begin{equation} \label{bound_Dv}
		\left|\tilde w_k(t,x+he_i) - \tilde w_k(t,x) \right|  \leq  
		C \sup_{\v \in \Uc} 
		 \E  \left[ \int_t^T\left|X_{t,x+he_{i}}^{\v}(s) - X_{t,x}^{\v}(s)\right| ds
		+   \left|\t^{t,x+he_i,\v}_k - \t^{t,x,\v}_k \right| \right].
	\end{equation} 
	\no By the Lipschitz continuity of $(\mu, \sigx)$,	\begin{equation} \label{diff_X}
		\Ex{\int_{t}^{T}\left|X_{t,x+he_{i}}^{\v}(s) - X_{t,x}^{\v}(s)\right|  ds} \leq C |h|.
	\end{equation}
	
	\no Furthermore, we can apply   \cite[Theorem 2.3]{bouchard_first_2017} with $\pi = 0, r=1$, and $P$  of the form $\varphi(X_{t,x+he_{i}}^{\v})$ or $\varphi(X_{t,x}^{\v})$ with $\varphi$ bounded, smooth with bounded first and second derivatives such that $\varphi(y) = |y|/k-1$ for $y$ in a neighborhood of $\partial B_{k}$. It implies that 
	\begin{equation} \label{diff_tau}
		\Ex{\left|\t^{t,x+he_i,\v}_k - \t^{t,x,\v}_k\right|}
		\leq C \Ex{\left| X_{t,x+he_i}^{\v}({\t^{t,x+he_i,\v}_k \wedge\t^{t,x,\v}_k}) - X_{t,x}^{\v}({\t^{t,x+he_i,\v}_k \wedge \t^{t,x,\v}_k}) \right|}
		\leq C|h|.
	\end{equation} 
	
	\no Combining (\ref{diff_X}) and (\ref{diff_tau}) with (\ref{bound_Dv}), we have 
	\begin{equation} \label{v_x_bounded}
		||D \tilde w_k||_\infty \leq C.
	\end{equation}

{\bf a.2. H\"older continuity of the first order derivatives :} \\
\no By linearizing  \eqref{eq: pde wk} as  in e.g.~\cite[Section 3.1]{andrews_fully_2004}, 
we obtain that, for any $0<h<T$, $(t,x)\in [0,T-h)\x B_{k}\mapsto h^{-1}(\tilde w_{k}(t+h,x)-\tilde w_{k}(t,x))$ solves an uniformly elliptic linear equation with bounded coefficients. Then, it follows  
from standard estimates deduced from the Krylov-Safonov Harnack inequality, see e.g.~\cite[Theorem 7]{andrews_fully_2004}, that there exists some  constants $C>0$ and $\b \in (0,1]$,  independent of $k$, such that for any subset $\mathcal{O} \subset [0,T)\x B_{k}$,  and any $(t,x),(t',x') \in \mathcal{O}$,
	\begin{equation*}
		|\partial_t \tilde w_k(t,x) - \partial_t \tilde w_k(t',x')| \leq C \left(|t-t'|^\frac{\b}{2} + |x-x'|^\b\right)\left(\|r\|_{\infty}+ \sup_\mathcal{O}|\partial_t \tilde w_k| \right),
	\end{equation*}
    in which $\|\cdot\|_{\infty}$ stands for the sup norm.
	Combining this with \eqref{v_t_bounded}, we obtain 
	\begin{equation} \label{v_t_Holder2}
		|\partial_t \tilde w_k(t,x) - \partial_t \tilde w_k(t',x')| \leq C \left(|t-t'|^\frac{\b}{2} + |x-x'|^\b\right), \;(t,x,t',x')\in ([0,T)\x B_{k})^{2}.
	\end{equation}

	\no Upon changing $\beta$, the same line of arguments, using now \eqref{v_x_bounded} in place of  \eqref{v_t_bounded}, lead to 
	\begin{equation} \label{v_x_Holder}
		|D\tilde w_k(t,x) - D\tilde w_k(t',x')| \leq C	\left(|t-t'|^\frac{\b}{2} + |x-x'|^{\b} \right), \;(t,x,t',x')\in ([0,T)\x B_{k})^{2}.
	\end{equation}
	
	\no \textbf{a.3. H\"older continuity of $D^2\tilde w_k$:} \\
    \no Since the regularity of $D\tilde w_k$ has been established, we treat its corresponding argument in $ {L}$ as a fixed parameter and consider the auxiliary problem  
	\begin{align*}
		\partial_t \phi + \tilde{L}(t,x, D^2\phi(t,x)) &= 0,\;(t,x) \in [0,T)\x B_{k} \\
		\phi(t,x) &= 0, \;(t,x) \in \partial_{P}([0,T)\x B_{k})
	\end{align*}
	where $\tilde{L}(t,x,A) := L(x, D\tilde w_k(t,x),A)$, $(t,x,A)\in [0,T)\x \R^{d}\x\S^{d}$.\\
	
	\no Fix an arbitrary $(t_0,x_0)$ in $[0,T)\x B_{k}$ and an arbitrary $\rho>0$ such that $t_0 + \rho^{-1} < T $ and $|x_0| + \rho < k$. We first consider the following problem
	\begin{align} 
		\partial_t \phi + \tilde{L}(\cdot, D^2\phi) &= 0 \text{ on } D_0^\rho := [0,T-\rho^{-1}) \times B_\rho(x_0) \\
		\phi &=\tilde w_k \text{ on } \partial_{P}D_0^\rho.
	\end{align}
	From now on, we borrow the notations in 	\cite[Chapter IV Section 1]{lieberman_second_2005} applied to the domain $D_0^\rho$. By \cite[Lemma 14.8 and Theorem 14.7]{lieberman_second_2005}, the above problem admits a unique $C^{1,2}(D_0^\rho)$ solution $\phi$ such that 
	\begin{equation} \label{property_14.16}
		\langle \phi \rangle^*_{2+\iota_{0}} + [\phi]^*_{2+\iota_{0}} \leq C_0(|\phi|^*_2 + b_{0})
	\end{equation}
	for some positive constant $b_{0}$, $C_{0}$, and $\iota_{0} \in (0,1]$ independent of $k$. Since $\tilde w_k$ also solves this localized problem,  the unicity of the solution implies that $\phi \equiv \tilde w_k$. Thus, \eqref{property_14.16} holds for $\tilde w_k$. 
	
	\no Recalling \eqref{v_x_bounded}-\eqref{v_x_Holder}, there exists   $\b \in (0,1)$ such that $|\tilde w_k|^*_{1+\b}$ is finite. Using \cite[Proposition 4.2]{lieberman_second_2005} and the inequality $a^\lambda b^{1-\lambda}\le a +b$, $a,b\ge 0$, $\lambda \in [0,1]$, we observe that, for  $\lambda\in (0,1)$ such that $\lambda (1+\beta)+(1-\lambda)(2+\iota_0)=2$, 
	\begin{equation*} 
		|\tilde w_k|^*_2 \leq C(|\tilde w_k|^*_{1+\b})^{\lambda} (|\tilde w_k|^*_{2+\iota_0})^{1-\lambda} \leq C\left(\e^{-\frac{1-\lambda}{\lambda}}|\tilde w_k|^*_{1+\b} + \e |\tilde w_k|^*_{2+\iota_0} \right)
	\end{equation*}
	for any $\e > 0$. Using now (\ref{property_14.16}) and the fact that $\phi \equiv \tilde w_k$   leads to
	\begin{align*}
		|\tilde w_k|^*_{2+\iota_0} 
		&= \left(|\tilde w_k|^*_2 - [\tilde w_k]^*_2 - \langle \tilde w_k\rangle^*_2 \right)+ [\tilde w_k]^*_{2+\iota_0} + \langle \tilde w_k \rangle^*_{2+\iota_0} \\
		& \leq |\tilde w_k|^*_2 + C_0(|\tilde w_k|^*_2 + b_{0}). 
	\end{align*}
	Combining the above, we obtain, upon changing $C_0$ and $b_{0}$ if necessary,  
	\begin{equation*}
		|\tilde w_k|^*_{2+\iota_0} \leq 2C_0(|\tilde w_k|^*_2 + b_{0}) \leq 2C_0 \left[C \left( \e^{-\frac{1-\lambda}{\lambda}}|\tilde w_k|^*_{1+\b} + \e|\tilde w_k|^*_{2+\iota_0}\right) + b_{0} \right].
	\end{equation*} 
	Choosing $\e>0$   small enough, we deduce that there exists some constant $K_{0}$ independent of $k$ such that $|\tilde w_k|^*_{2+\iota_0} \leq K_0(1+|\tilde w_k|^*_{1+\b})$, and thus, $D^2\tilde w_k$ is locally H\"older continuous since $|\tilde w_k|^*_{1+\beta}$ is finite.\\
	
	\no \textbf{b. Convergence and smoothness of $w$.} \\
    For any $k \in \N$, we have shown that $|\tilde w_k|^*_{2+\iota} \leq C$ on $D^{\rho}_{0}$ for some constants $\iota$ and $C$ independent of $k$. Sending $k\to \infty$, it follows from the Arzel\'a-Ascoli theorem that $(\tilde w_k)_{k\ge 0}$ converges on $D^{\rho}_{0}$ to some $\tilde w_{\infty}$ such that $|\tilde w_\infty|^*_{2+\iota} \leq C$ on $D^{\rho}_{0}$. Clearly, the limit does not depend on $(t_{0},x_{0})$ nor $\rho$. Hence, convergence holds on $[0,T)\x \R^{d}$ and $\tilde w_{\infty}$ belong to $C^{1,2}([0,T)\x \R^{d})$ with locally uniformly H\"older continuous derivatives $(\partial_{t}\tilde w_{\infty},D\tilde w_{\infty},D^{2}\tilde w_{\infty})$ on $[0,T)\x \R^{d}$. It also satisfies \eqref{v_bounded}, \eqref{v_x_bounded} and solves \eqref{eq: pde wk} on $[0,T)\x \R^{d}$. By a standard verification argument, $\tilde w_{\infty}=\tilde w$, which concludes the proof of Proposition \ref{prop: w smooth}. 
	\end{proof}

% PDE characterization of value function Vb on the boundary
\subsection{PDE characterization on the boundary $\partial\Dc$} \label{sect_proof_PDE_Vc}

We provide here the proof of Theorem \ref{thm : dirichlet condition}. Although the result looks similar to the one  of \cite[Proposition 3.2]{bouchard_optimal_2010}, it is of very different nature because their Assumption 3.3.(ii) can not be satisfied in our setting:  their space  ${\cal N}_{0}(x,y,p,q)=\{(u,\sigma(x,u)q),\;u\in U\}$ is not a singleton. We also do not need their Assumption 3.4. On the other hand, we shall make use of the fact that $p\mapsto V(\cdot,p)$ is  non-decreasing.  

 \begin{Remark}\label{rem : ecriture preuve en terme de Vc} For ease of exposition, we shall write the proof of  Theorem \ref{thm : dirichlet condition} in terms of $(t,z)\in [0,T]\x \Rdd\mapsto \Vc_*(t,z):=V_*(t,z,w(t,z))$ and $(t,z)\in [0,T]\x \Rdd\mapsto \Vc^*(t,z):=V^*(t,z,w(t,z))$ in which $V^*$ and $V_*$ are the upper- and lower-semicontinuous enveloppes of $V$. It follows from \eqref{eq: def varpi}-\eqref{eq : def V_bar} that $\bar \Vc^*(t,x)= \Vc^*(t,x,y)$ and $\bar \Vc_*(t,x)= \Vc_*(t,x,y)$ so that $\Vc^*$ and $\Vc_*$ do indeed not depend on their last argument, and the associated PDEs coincides. 

In particular, $\phi$ is a test function for $\bar \Vc_*$ at $(t_0,x_0)$ if and only $(t,x,y)\mapsto \phi(t,x)$ it is a test function for $ \Vc_*$ at $(t_0,x_0,y_0)$ for any $y_0\in \R$. Obviously, it satisfies $D_y\phi=0$. The same holds for  $\bar \Vc^*$.
 \end{Remark}
 
\begin{proof}[Proof of  Theorem \ref{thm : dirichlet condition}.]

We  decompose the proof in several steps. \\ 
  
{\bf a. Supersolution property.} Fix $(t_0,x_0,y_0) \in [0,T]\x \R^{d+1}$, $z_{0}:=(x_{0},y_{0})$. Recall Remark \ref{rem : Vc decroi en p} and let $(t_{n},x_{n},y_n)_{n\ge 1}$ be a sequence in $[0,T]\x \R^{d+1}$ that converges to $(t_{0},z_0)$ and such that 
	$$
	V(t_{n},z_n,w(t_{n},z_{n}))\to \Vc_{*}(t_{0},z_0)\mbox{ as } n\to \infty,
	$$
where $z_{n}:=(x_{n},y_{n})$.\\

{\bf a.1.} We first assume that $t_{0}<T$. 	
Let $\phi$ be a $C^{1,2}_{b}([0,T]\x \R^{d+1})$ function such that $(t_0,z_0)$ achieves a  minimum of $\Vc_{*}-\phi$, equal to $0$, on $ {[0,T]\x \R^{d+1}}$.  Set 
$$
	\gamma_{n}:=V(t_{n},z_n,w(t_{n},z_{n}))-\phi(t_{n},z_n), \;n\ge 1.
	$$
 Fix $u_{0}\in \Ur(t_{0},x_{0})$ and let $\hat \ur$ and  $\hat X^{n}:=\hat X_{t_{n},x_{n}}$ be as in Assumption  \ref{assum_selection_unique_opti} with respect to $(u_{0},t_{0},x_{0},y_0)$. Set $\hat Y^n:=y_n+\int_{t_n}^\cdot g(\hat X^n(s),\hat \ur(s,\hat X^n(s)))ds$ and  let $M^{n}:=$ $w(t_{n},z_n)$ $+$ $\int_{t_{n}}^{\cdot} D\varpi(s,\hat X^{n}(s)) {\sigma(\hat X^n(s),\hat \ur(s,\hat X^n(s)))}dW_{s}$. Then, \eqref{eq: def varpi}, Proposition \ref{prop: w smooth} and 
Assumption  \ref{assum_selection_unique_opti} imply that $M^{n}$ equals $w(\cdot, \hat X^{n},\hat Y^n)$ on $[t_{n},T]$. In particular $M^{n}$ is a martingale such that $M^{n}(T)=G(\hat X^{n}(T))+\hat Y^n(T)$. Thus, by the dynamic programming principle, see  \cite[Theorem 6.1 and Remark 6.1]{bouchard_optimal_2010}, 
\begin{align*} 
&\phi(t_{n},z_n)+\gamma_{n}\\
&=V(t_{n},z_n,w(t_{n},z_{n}))
\\
&\ge \E[V_*(t_{n}+h_{n},\hat Z^n(t_{n}+h_{n}),w(t_{n}+h_{n},\hat Z^{n}(t_{n}+h_{n})))+\int_{t_n}^{t_n+h_n} f(\hat X^n(s),\hat\ur(s,\hat X^n(s)))ds]
\\
& = \E[\Vc_{*}(t_{n}+h_{n},\hat Z^n(t_{n}+h_{n}))+\int_{t_n}^{t_n+h_n} f(\hat X^n(s),\hat\ur(s,\hat X^n(s)))ds]\\
&\ge \E[\phi(t_{n}+h_{n},\hat Z^n(t_{n}+h_{n}))+\int_{t_n}^{t_n+h_n} f(\hat X^n(s),\hat\ur(s,\hat X^n(s)))ds]
\end{align*}
in which $\hat Z^{n}:=(\hat X^{n},\hat Y^{n})$, and  $h_{n}:=\sqrt{\gamma_{n}}<T-t_{n}$ upon taking $n$ large enough. Since $\hat \ur$ is continuous at $(t_{0},x_{0})$, the above implies that 
\begin{align*}
0&=\lim_{n\to \infty} \sqrt{\gamma_{n}}
\\
&\ge \liminf_{n\to \infty}\Ex{\frac{1}{h_{n}}\int_{t_{n}}^{t_{n}+h_{n}} \brak{\Lc^{\hat \ur(s,\hat X^{n}(s))}_X+g(\hat X^n(s),\hat \ur(s,\hat X^n(s)))D_y}\phi(s,\hat X^{n}(s),\hat Y^n(s))   ds}\\
&+\liminf_{n\to \infty}\Ex{\frac{1}{h_{n}}\int_{t_n}^{t_n+h_n} f(\hat X^n(s),\hat\ur(s,\hat X^n(s)))ds}
\\
&=\brak{\Lc^{u_{0}}_X+g(x_0,u_0)D_y}\phi(t_{0},z_0)+f(x_0,u_0).
 \end{align*}
It remains to remind Remark \ref{rem : ecriture preuve en terme de Vc}.\\

{\bf a.2.} We now assume that $t_{0}=T$. If $\#\{n\ge 1: t_{n}=T\}=\infty$, then we can assume that $t_{n}=T$, and therefore $V(t_{n},z_n,p_{n})=F(x_{n})$, for all $n\ge 1$. Then, $\Vc_{*}(t_{0},z_0)=F(x_{0})$ by continuity of $F$. Otherwise, we   can assume that $t_{n}<T$ for all $n$. In this case, fix $u_{n}\in \Ur(t_{n},x_{n})$ and let $\hat \ur$ and  $\hat X^{n}:=\hat X_{t_{n},x_{n}}$ be as in Assumption  \ref{assum_selection_unique_opti} with respect to $(u_{n},t_{n},x_{n})$. Arguing as above, 
$
V(t_{n},z_n,p_{n})\ge \E[F(\hat X^{n}(T))+\int_{t_n}^Tf(\hat X^{n}(s),\hat \ur(s,\hat X^{n}(s))ds]$. Then, $\Vc_{*}(t_{0},z_0)\ge F(x_{0})$ by using Remark \ref{rem : moments X}, the continuity and polynomial growth of $F$, and by taking the limit in the previous inequality, after possibly passing to a subsequence. We conclude by using  Remark \ref{rem : ecriture preuve en terme de Vc} that $\bar \Vc_{*}(t_{0},x_0)\ge F(x_{0})$.
 \\

{\bf b. Subsolution property.} 
Fix $(t_0,x_0,y_0) \in [0,T]\x \R^{d+1}$ and set $z_0:=(x_0,y_0)$.
Let $(  t_{n},  x_{n}, y_n,  p_{n})_{n\ge 1}$ in ${\rm cl}\Dc$ be converging to $(t_{0},x_0,y_0,w(t_{0},z_{0}))$ such that $\{V^{*}(  t_{n},  z_n,  p_{n})\}_{n\ge 1}$ converges to 
$\Vc^{*}(t_{0},z_0)$, in which $z_n:=(x_n,y_n)$.  Set $\gamma_{n}= p_{n}-w(  t_{n},  z_n)$. By Remark \ref{rem : Vc decroi en p}, one can assume that $\gamma_{n}>0$ for all $n$.\\

{\bf b.1.} We first assume that $t_{0}<T$ and that $p_0=w(t_0,z_0)$. 
Let $\phi$ be a $C^{1,2}_{b}([0,T]\x \R^{d+1})$ function such that $(t_0,x_0,y_0)$ achieves a  maximum of $\Vc^{*}-\phi$, equal to $0$, on $\domd$.  Let $\ell\ge 1$ be such that $V$ has at most polynomial growth of order $\ell$, and define 
\begin{align*}
\Uptheta_n (t,x,y,p) &:= V^*(t,x,y,p) - \left(\phi(t,x,y) + \gamma_{n}^{-\frac12} (p-w(t,x,y)) + \Psi(t,x,y)\right),\;
\end{align*}
for any $(t,x,y,p)\in [0,T]\x \R^{d+2},\;n\ge 1$ where $\Psi(t,x,y) := |t-t_0|^{2\ell} + |x-x_0|^{4\ell}+ |y-y_0|^{2\ell}$. 

Consider a compact neighborhood $\Oc := B_r((t_0, z_0, p_0))$ such that $t_0 + r < T$ and $\Oc \cap \inte\Dc \neq \varnothing$. Let us consider a sequence $(\hat t_{n},\hat x_{n},\hat y_n,\hat p_{n})_{n \geq 1}$ in $\Oc \cap cl\Dc$ such that  $(\hat t_{n},\hat z_n,$ $\hat p_{n})$ is a maximum point of $\Uptheta_n$ on ${\rm cl}\Dc$, in which $\hat z_n:=(\hat x_{n},\hat y_n)$. Standard arguments imply that $(\hat t_{n},\hat z_n,\hat p_{n})\to (t_{0},z_0,w(t_{0},z_{0}))$, after possibly passing to a subsequence. Indeed, up to a subsequence, it must converge to some $(t_{\infty},z_{\infty},p_{\infty})$, and we observe that 
\begin{align} 
0   &= (\Vc^* - \phi)(t_0,z_0) \nonumber \\
    &= \lim_{n\to \infty} \Uptheta_n ( t_{n}, z_n, p_{n}) \nonumber\\
    & \leq \limsup_{n\to \infty}   \Uptheta_n (\hat t_{n},\hat z_n,\hat p_{n}) \nonumber \\
    &\le  (V^* - \phi)(t_{\infty},z_\infty, p_\infty) - \lim_{n\to \infty}\left[ \gamma_{n}^{-\frac12} (\hat{p}_n-w (\hat t_{n},\hat z_n)) + \Psi(\hat t_n, \hat x_n, \hat y_n)\right]. \label{eq proof: inequality for subsolution}
\end{align}
Since $\gamma_n \downarrow 0$ and $\hat p_n \geq w(\hat t_n, \hat z_n)$ for all $n\ge 1$, it must be that $p_\infty = w(t_\infty, z_\infty)$. Consequentially, \eqref{eq proof: inequality for subsolution} leads to
\begin{align*} 
0   &= (\Vc^* - \phi)(t_0,z_0) \nonumber \\
    &\le  (V^* - \phi)(t_{\infty},z_\infty, w(t_\infty, z_\infty))\nonumber - \lim_{n\to \infty}\left[ \gamma_{n}^{-\frac12} (\hat{p}_n-w (\hat t_{n},\hat z_n)) + \Psi(\hat t_n, \hat x_n, \hat y_n)\right]\\
    & =(\Vc^* - \phi)(t_\infty,z_\infty)- \Psi(t_\infty, x_\infty, y_\infty) \\
    & \leq (\Vc^* - \phi)(t_0, z_0).
\end{align*}

We can now apply Theorem \ref{thm : pde interior domain} and \eqref{eq: def varpi} to deduce that 
	\begin{align*}  
		 &\sup_{u \in U} \crl{\Lc^u_X \phi(\hat t_{n},\hat z_n)+ g(\hat x_n,u)D_{y}\phi(\hat t_{n},\hat z_n) + f(\hat x_n,u) -  \gamma_{n}^{-\frac12}(\Lc^u_X \varpi\txnhat+ g(\hat x_n,u))} \\
		 &\ge \e_{n}	
		 \end{align*} 
	for some $(\e_n)_{n\ge 1}$ that converges to 0. Letting $n\to \infty$, recalling Proposition \ref{prop: w smooth} and the fact that  $ \gamma_{n}^{-\frac12}\to \infty$, and using the fact that $U$ is compact and that $\mu$ and $\sigma$ are continuous, we deduce that
	$$
	\Lc^u_X \phi(t_{0},z_0)+g(x_{0},u)D_{y}\phi(t_{0},z_0)+f(x_0,u)\ge 0
	$$
for some $u\in U$ such that $\Lc^u_X \varpi(t_{0},x_{0})+g(x_0,u)=0$, i.e.~$u\in U(t_{0},x_{0})$. Again, we conclude by reminding Remark \ref{rem : ecriture preuve en terme de Vc}.\\

{\bf b.2.} We now assume that $t_{0}=T$. By arguing as in a.2.~it suffices to consider the case where $t_{n}<T$ for all $n\ge 1$. Since $p_{n}>w(t_{n},x_{n},y_n)$, we can find $\nu_{n}\in \Uc$ such that 
$\E[{\rm G}^{\nu_{n}}_{t_{n},x_{n},y_n}]\le p_{n}$ and $V(t_{n},x_{n},y_n,p_{n})\le \E[{\rm F}^{\nu_{n}}_{t_{n},x_{n}}]+n^{-1}$, for each $n\ge 1$. By  Remark \ref{rem : moments X}, the continuity and polynomial growth of $f$, passing to the limit, possibly along a subsequence, implies that $\Vc^{*}(t_{0},x_{0},y_0)\le F(x_{0})$. In view of Remark \ref{rem : ecriture preuve en terme de Vc}, $\bar \Vc^{*}(t_{0},x_{0},y_0)\le F(x_{0})$.
\end{proof}

\begin{Remark}
    Note that in the above proof, the control $\a$ for the Martingale representation process $M$ does not appear when we apply the result of Theorem \ref{thm : pde interior domain} due to the fact that the test function  $\phi$ does not depend on $p$. 
\end{Remark}

\appendix 

% ---------------------------------
%%% Appendix : Implementation techniques %%%
%----------------------------------
\section{Further discussion on implementation of the numerical resolution} \label{append : discussion on implementation}

In Section \ref{sect_algo}, we have introduced a multi-step algorithm used for solving the stochastic optimal control under constraints in its essence. Beyond the method described in \ref{sect_algo}, we also use some practical suggestions from the growing machine learning literature to fine-tune our PINN training with certain adaptations to the specificity of our problem. \\

% Pretraining : scaling
First and foremost, as a good practice, we scale our inputs so that all variables have similar magnitude in order to avoid biases against variables with inherently lower magnitude value such as interest rate. Furthermore, as discussed in  \cite[Section 3]{wang_2023}, this helps to improve convergence, and it also works better with common initialization schemes such as Glorot. \\

% Architecture of neural network : ResNet +  Swish + Sinusoidal ime encoding
As for the architecture of the neural networks, for those to estimate $\varpi, \Vc$, and $V$ using PINN, we use the residual block neural network (also known as ResNet) which has been shown in \cite{cheng_2021} to enhance the accuracy of PINN training and generally accepted as a better architecture for PINN training than simple Multi-Layer Perceptron (MLP) as discussed in \cite{wang_2023}. Additionally, we opt for Swish activation function as described in \cite{ramachandran_swish_2017} as our activation function instead of the commonly used Rectified Linear Units (ReLU). We also encode the time variable with the sinusoidal encoding suggested in \cite{vaswani_attention_2023}. \\ 

Otherwise, for the neural networks to estimate the optimal control, we use simple Multi-Layer Perceptron (MLP) with sinusoidal encoding for the time variable. Furthermore, the activation function is Swish for the hidden layers whereas for the output layer, so as to ensure that the output, i.e the control variable, is within the permissible range, we use the Tanh activation function. \\ 

% Training : putting weights on different components of the loss function 
During the PINN training process, we employ a moving-weight scheme to balance between different components of the loss function, namely the PDE loss and the boundary/terminal loss. In particular, we use a deterministic monotonic weighting scheme which depends on the training time proxied by the number of epochs. For example, for the estimation of $\varpi$, at epoch $e \in \{1,...E\}$, the loss function that we actually use is
\begin{align*}
    \mathbf L^\varpi(\theta) = \lambda^\varpi_e \frac1J &\sum^J_{j=1} \frac1K \sum^{K-1}_{k=0} \left| \Lc^{\vhat^{\varpi,j}\tk}_X n^\varpi_\t (t_k, X^{\vhat^\varpi,j}\tk)+g(X^{\vhat^\varpi,j}\tk, \vhat^{\varpi, j}\tk)\right|^2 \\
    & + \frac1J \sum^J_{j=1}  \left| n^\varpi_\t(T, X^{\vhat^\varpi,j}_T) - G(X^{\vhat^\varpi,j }_T)\right| ^2
\end{align*}
with 
\begin{equation} \label{append : weighting interior loss}
\lambda^\varpi_e := \underline \lambda^\varpi \vee \left[\indi{\frac{e}{d_e} \in \N}\left(\lambda^\varpi_0 + \frac{(\lambda^\varpi_E - \lambda^\varpi_0)*e}{E} \right) + \indi{\frac{e}{d_e} \notin \N}\lambda^\varpi_{e-1} \right] \wedge \bar \lambda^\varpi
\end{equation}
where $d_e$ is the frequency of weight update, $E \geq 1$ is the number of epochs beyond which the weight will no longer be updated, $\lambda^\varpi_0$  and $\lambda_E^\varpi$ are initial and final values for $\lambda^\varpi_e$, and $0 < \underline \lambda^\varpi < \bar \lambda^\varpi$ are lower and upper bounds for $\lambda^\varpi_e$. In other words, starting from $\lambda_0^\varpi$ at the first epoch until reaching $\lambda^\varpi_E$ at the $E$-th epoch, the loss weight $\lambda^\varpi$ gets updated after every $d_e$ epochs in a linear manner while being bounded within $[\underline \lambda^\varpi, \bar \lambda^\varpi]$. All of these are considered hyper-parameters for training, and we apply similar weighting scheme for the other training that requires PINN. During the training process, we have also experimented with more sophisticated schemes such as the self-adaptive weighting for each data point described in \cite{mcclenny_2023} or the dynamically-adjusted loss weighting using $\mathbb L^2$ norm as in \cite{wang_2023}, but, for our specific problem, the deterministic scheme provides the best training in terms of both accuracy and training stability.\\

% Training : optimizer + learning rate adaptation + batching
We use Adam optimizer with weight decay and combine an exponential and a multi-step decay schedulers to fine-tune our learning rate. Finally, we divide our data sample into batches for each iteration/epoch. The gradient descend is performed at the batch level whereas the learning rate is updated at the epoch level. Since the PINN method relies heavily on the computation of gradients, batching effectively reduces the amount of memory stockage needed for each gradient descend and hence, speeds up the training time. 

% Parameters for all 
\section{Parameters for the application example} \label{append : parameters}

We specify in the following the parameters that we have used for the numerical demonstration in Section \ref{sect_toy example} including the parameters for the ALM model and the hyper-parameters for the neural network trainings. 

    % Parameters for the model (including the distribution for sampling)
\subsection{Parameters for the ALM model} 
        % parameters for dynamic
We recall that the time horizon $T = 1$ is discretized into $K=256$ periods. For modeling the assets, we use $\mu_S = 0.15, \sigma_S = 0.25$ with fixed interest rate $r=3\%$ and fixed book value $\tilde S = 200$ \euro{}. For modeling surrenders, the cyclical lapse rate $\gamma^c = 5\%$ and the dynamic lapse coefficient $\gamma^d = 0.5$. \\

        % parameters for sampling 
At the beginning of the time horizon $t_0 = 0$, the initial values for the risky asset price $S_0$, the quantity of risky asset $\phi_0$, the cash-to-non-cash ratio $\delta^\b_0$, and the liability-to-total-asset ratio $\delta^L_0$ are drawn from a uniform distribution with the ranges given in Table \ref{table : param sampling}. Then, we calculate initial cash and initial liability as 
$$
\beta_0 = \delta^\b_0 \phi_0 S_0 \text{ and } L_0 = \delta^L_0(\beta_0 + \phi_0 S_0).
$$
The reasoning for sampling cash-to-non-cash ratio and liability-to-total-asset ratio instead of directly sampling cash and liability is to guarantee that liability is inferior to total asset (i.e the portfolio is not insolvent) and that the majority of asset value is placed in the risky asset, which is realistic. Note that, as discussed in Appendix \ref{append : discussion on implementation}, we apply scaling to the data sample before training neural network.\\

\begin{table}[H]
\centering
\begin{tabular}[h]{|l|c|c|c|}
    \hline
    State variables & Interval pre-scaling & Interval post-scaling & Scaling factor\\
    \hline
    Risky asset price - $S_0$ & [150.0, 250.0] & [0.15, 0.25] & $10^{-3}$ \\
    Quantity asset - $\phi_0$ & [1000.0, 2000.0] & [1.0, 2.0] & $10^{-3}$ \\
    Cash-to-non-cash ratio - $\delta^\b_0$ & [0.1, 0.4] & [0.1, 0.4] & $1$ \\
    Liability-to-total-asset ratio - $\delta^L_0$ & [0.6, 0.8] & [0.6, 0.8] & 1 \\
    \hline
\end{tabular}
\caption{Distribution of state variables at $t=0$}
\label{table : param sampling}
\end{table}

        % parameters for control variables
As for the control variables, we allow the controls for the state variable and for the Martingale representation to be within the bounds as in Table \ref{table : control bounds}. Furthermore, the limits for the cost function $G$ and the utility function $F$ can also be found in Table \ref{table : control bounds}. 

\begin{table}[H]
    \centering
    \begin{tabular}{|l|c|c|c|}
    \hline
    Control variables & Interval pre-scaling & Interval post-scaling & Scaling factor \\
    \hline
    Trade intensity - $\dot \phi$ & [-250.0, 250.0] & [-0.25, 0.25] & $10^{-3}$ \\ 
    Profit sharing rate - $\pi$ & [0.85,1.00] & [0.85, 1.00] & 1 \\
    
    \hline
    \hline 
    Bounds & Value pre-scaling & Value post-scaling & Scaling factor \\
    \hline
    Minimal terminal capital loss $\underline \Gc$ & $-10^{13}$ & $- 10^{10}$ & $10^{-3}$ \\
    Maximal terminal capital loss $\overline \Gc$ & $10^{13}$ & $10^{10}$ & $10^{-3}$ \\
    Minimal terminal utility $\underline \Fc$ & $-10^{10}$ & $-10^{10}$ & 1 \\
    Limit $\Nc$ for Martingale controls $\a$ & $10^5$ & 10 & $10^{-6}$ \\ 
    Penalization coefficient $\kappa$ &16 &16& 1\\
    \hline
    \end{tabular}
    \caption{Bounds for controls and functions}
    \label{table : control bounds}
\end{table}

The risk aversion coefficient for the exponential utility function $F$ is set to be $\zeta = 0.25$. The sampling parameter $\Ec_P$, which is used in the sampling of the initial constraint $P_0$ as described in \eqref{sampling P},  is set to 10.0 post-scaling, which is equivalent to $10^5$ pre-scaling \footnote{$P^\a$ is the Martingale representation of the terminal capital loss $L_T - \b_T - \phi_T S_T$, which has the same magnitude as cash $\beta$, so $P$ shares the same scaling factor as cash $\b$. Since brownian $W$ is unit less, $\a$ has the same scaling factor as $P$.}.

    % Hyperparameters for training neural networks
\subsection{Hyperparameters for training neural networks} 

We recall that we redraw the initial states and brownian paths for each training step and for the validation step with the intention of maximizing the generality of the neural networks. With the given portfolio model with one risky asset, the brownian (representing the risk factor) is 1-dimensional. In the following, $J$ represents the sample size, i.e the number of initial states to be drawn. We represent the hyper-parameters for the control neural networks $\hat \v^\varpi$ and $\hat \upsilon^V$ together since they have the same architecture and the same training method, and similarly for the neural networks $\hat \varpi, \hat \Vc$, and $\hat V$. \\

\textbf{Parameters for training $\hat \v^\varpi$ and $\hat \upsilon^V$}\\

For the training of $\hat \v^\varpi$, the milestones for the multi-step scheduler (for updating the learning rate) are $[500, 1000, 1500, 2000, 2500, 3000, 3500, 4000]$, and for the training of $\hat \upsilon^V$, they are $[500, 1500, 2500, 3500, 4500, 5500, 6500, 7500, 8500, 9500]$.

Specifically for training the optimal control on the domain boundary, we add a weight $\lambda_g = 16.0$ to the intermediary cost function $g$ to guide the training to also pay attention to minimize the running costs. This weight does not interfere with other trainings. \\

\begin{table}[H]
\centering
\begin{tabular}{|l|l|c|c|} 
    \hline
    \multicolumn{2}{|c|}{Neural Network} & $\vhat^\varpi$ & $\hat \upsilon^V$\\
    \hline
    \multirow{3}{*}{Structure} & Number of frequencies (for time encoder)& 8     & 8 \\
        & Number of hidden layers  & 6 & 7 \\
        & Number of neurons per hidden layer & 100 & 128 \\
    \hline
    \multirow{3}{*}{Training sample} & Number of training epoch & 5,000 & 10,000\\
        & Total sample size $J$ & 50,000 & 50,000 \\
        & Batch size & 10,000 & 10,000 \\
    \hline
    \multirow{4}{*}{Scheduler and Optimizer} & Initial learning rate & 0.005 & 0.0025\\
        & Exponential scheduler decay coefficient & 0.9996& 0.9996\\
        & Multi-step scheduler decay coefficient & 0.6 & 0.6 \\
        & Weight decay coefficient for Adam optimizer & $10^{-7}$ & $10^{-7}$ \\
    \hline
\end{tabular} 
\caption{Training hyper-parameters for $\vhat^\varpi$ and $\hat \upsilon^V$}
\label{table : param traing u and a}
\end{table}

% - Hyperparameters for training w, Vb and V
\textbf{Parameters for training $\hat\varpi$, $\hat\Vc$, and $\hat V$} \\
The milestones for the multi-step scheduler for the training of 
\begin{itemize}
    \item $\hat \varpi$ : [1000, 2000, 3000, 4000, 4500, 5000, 5500, 6000, 6500, 7000, 7500]
    \item $\hat \Vc$ : [1000, 2000, 3000, 4000, 4500, 5000, 5500, 6000]
    \item $\hat V$ : [1500, 3000, 4500, 6000, 7500]
\end{itemize}

\begin{table}[H]
    \centering
    \begin{tabular}{|l|l|c|c|c|}
        \hline
        \multicolumn{2}{|c|}{Neural Network} & $\hat \varpi$ & $\hat \Vc$ & $\hat V$ \\
        \hline 
        \multirow{3}{*}{Structure} & Number of frequencies (for time encoder)& 8     & 8  & 8 \\
            & Number of hidden layers  & 6 & 6 & 8  \\
            & Number of neurons per hidden layer & 128 & 128 & 100 \\
        \hline 
        \multirow{3}{*}{Training sample} & Number of training epoch & 10,000 & 6,000 & 8,000\\
            & Total sample size $J$ & 5,000 & 5,000 & 5,000\\
            & Batch size & 250 & 250 & 250 \\
        \hline
        \multirow{4}{*}{Scheduler and Optimizer} & Initial learning rate & 0.001 & 0.002 & 0.004 \\
            & Exponential scheduler decay coefficient & 0.9996 & 0.9996 & 0.998\\
            & Multi-step scheduler decay coefficient & 0.5 & 0.75 & 0.5 \\
            & Weight decay coefficient for Adam optimizer & $10^{-7}$ & $10^{-7}$ & $10^{-7}$\\
        \hline
        \multirow{4}{*}{Weight for PDE loss $\lambda$ \footnote{Recall the formulation in \eqref{append : weighting interior loss}.}} & Initial weight for interior loss $\lambda_0$ & 0.0 & 0.0 & 0.0\\
            & Final weight for interior loss $\lambda_E$ & 0.5 & 0.5 & 0.5\\
            & Frequency of weight update $d_e$  & 500 & 500 & 750\\
            & Last epoch of weight update $E$ & 5000 & 5000 & 7500 \\
        \hline 
    \end{tabular}
    \caption{Training hyper-parameters for  $\hat \varpi$, $\hat \Vc$, and $\hat V$}
    \label{table : param training w, Vb, and V}
\end{table}

%% Include the bibliography
\newpage
\bibliographystyle{plain}

\end{document}